\newcommand{\Acirc}{\accentset{\circ}{A}}
\newtheorem{prop}{Proposition}[section]
\newtheorem{thm}[prop]{Theorem}
\newtheorem{lem}[prop]{Lemma}
\newtheorem{coro}[prop]{Corollary}
\newtheorem*{rema}{Remark}
 \title[The area preserving Willmore flow]{The area preserving willmore flow and local maximizers of the Hawking mass in asymptotically Schwarzschild manifolds}
 \author{Thomas Koerber}
 \address{ Albert-Ludwigs-Universit\"at Freiburg,
 	Mathematisches Institut,
 	Eckerstr. 1
 	D-79104 Freiburg, Germany}
 \email{thomas.koerber@math.uni-freiburg.de}
\begin{document}

	\begin{abstract}
		We study the area preserving Willmore flow in an asymptotic region of an asymptotically flat manifold which is $C^3-$close to Schwarzschild. It was shown by Lamm, Metzger and Schulze that such an end is foliated by spheres of Willmore type, see \cite{lamm2011foliations}. In this paper, we prove that the leaves of this foliation are stable under small area preserving $W^{2,2}-$perturbations with respect to the area preserving Willmore flow. This implies, in particular, that the leaves are strict local area preserving maximizers of the Hawking mass with respect to the $W^{2,2}-$topology.
	\end{abstract}
	\maketitle
\section{Introduction}
Let $(M,g)$ be an asymptotically flat Riemannian three-manifold with non-negative scalar curvature. Under suitable decay conditions on the metric, such a manifold possesses a global non-negative invariant called the ADM mass and denoted by $m_{\operatorname{ADM}}$ (see \cite{arnowitt1961coordinate,schoen1979proof,bartnik1986mass}). On the other hand, finding the right notion of quasi-local mass corresponding to this global invariant remains an interesting open problem (see \cite{penrose1982some}). A promising candidate is the so-called Hawking mass $m_H$ defined by
$$
m_H(\Sigma):=\frac{|\Sigma|^{\frac12}}{(16\pi)^{\frac32}}\bigg(16\pi-\int_\Sigma H^2d\mu\bigg),
$$ 
where $\Sigma$ is a compact surface bounding a region $\Omega$ whose mass is to be determined. With the help of the Hawking mass, the ADM mass can be quantified in terms of the local geometry: in a celebrated work, Huisken and Illmanen used a weak version of the inverse mean curvature flow to prove the Riemannian Penrose inequality which states that the ADM mass of an asymptotically flat manifold is bounded from below by the Hawking mass of any connected outward minimizing surface (see \cite{huisken2001inverse}). A different version of the Penrose inequality, where the comparison surface is required to be minimal but not necessarily connected, was later on shown by Bray using a quasistatic flow (see \cite{bray2001proof}). More recently, Huisken introduced a concept of isoperimetric mass which only relies on $C^0$-data of the metric and provides a notion of quasi-local as well as global mass. The global mass can be shown to agree with the ADM mass in case the latter is well-defined. It turns out that the isoperimetric mass can be characterized in terms of the Hawking mass of outward minimizing surfaces (see \cite{huisken2006isoperimetric} or \cite{jauregui2016lower} for a more detailed discussion).
\\ \indent
While the Hawking mass enjoys such desirable connections to the global geometry, there are unfortunately many surfaces with negative Hawking mass. This is a contrast to some other concepts of quasi-local mass such as the Brown-York mass (see \cite{shi2002positive}).  It was therefore a crucial insight by Christodolou and Yau that  the Hawking mass of a closed stable constant mean curvature surface is non-negative (see \cite{christodoulou1988some}). This suggested that such surfaces are suitable to test the gravitational field of an asymptotically flat manifold and motivated the study of the isoperimetric problem in such spaces. As some of the following results require stronger decay conditions on the metric than asymptotical flatness, we make the following definition: the metric $g$ is said to be $C^k-$close to Schwarzschild with decay coefficient $\eta$ and ADM mass $m$ if in the chart at infinity there holds $g=g_S+h$, where $h$ a symmetric two tensor satisfying 
\begin{align}
|\partial^j h|\leq \eta r^{-2-j},\ \label{decay equation}
\end{align}
for any $0\leq j\leq k$.
Here, $g_S$ is the Schwarzschild metric with mass $m$, $\partial$ the Euclidean derivative and $r$ the radial parameter in the chart at infinity. The Schwarzschild space models a static, single black hole and a space which is $C^k-$close can be understood to be a small perturbation. The first breakthrough in the study of the isoperimetric problem was accomplished by
Huisken and Yau, who used a volume-preserving version of the mean curvature flow to show that an asymptotic region of an asymptotically flat manifold which is $C^4-$close to Schwarzschild and has non-negative scalar curvature is foliated by embedded stable constant mean curvature spheres. Such a foliation induces a natural coordinate system and also gives rise to a geometric center of mass. This result was later on refined by Qing and Tian who showed uniqueness of this foliation (see \cite{qing2007uniqueness}).
Using an ingenious argument, Bray showed in his PhD-thesis that the centred spheres are the unique non-null-homologous isoperimetric surfaces in the Schwarzschild manifold (see \cite{bray2009penrose}). This provided evidence that the leaves of the foliation in \cite{huisken1996definition} might actually be isoperimetric. In another breakthrough, Eichmair and Metzger extended the idea of Bray and showed in \cite{eichmair2013unique} that a foliation as in \cite{huisken1996definition} exists even if the manifold is only $C^2-$close to Schwarzschild. Furthermore, they proved that in the asymptotic region the leaves are in fact the unique isoperimetric surfaces enclosing a sufficiently large volume. It was later on shown by Chodosh, Eichmair, Shi and Yu that a unique minimizer of the isoperimetric problem exists even if the manifold is only asymptotically flat and satisfies a certain decay condition on the scalar curvature (see \cite{chodosh2016isoperimetry}). On the other hand, studying the uniqueness of stable constant mean curvature spheres which are not necessarily isoperimetric
turned out to be a more difficult problem. As a first step in this direction,
Brendle showed a Heintze-Karcher type inequality and used a conformal flow in an elegant way to show that the centred spheres are the only constant mean curvature surfaces contained in one half of the Schwarzschild manifold  (see \cite{brendle2013constant}).  Finally, in a series of crucial results, Chodosh and Eichmair obtained the unconditional characterization of stable constant mean curvature surfaces in asymptotically flat manifolds which are $C^6-$close to Schwarzschild and whose scalar curvature is non-negative and satisfies a certain decay condition. By comparing certain mass flux integrals (\cite{chodosh2017global}) and using a  Lyapunov-Schmidt analysis to study null-homologous surfaces (\cite{chodosh2017far}), they showed that the leaves of the foliation are the only stable compact constant mean curvature surfaces without any assumption on their homology class. In the proof, the result \cite{carlotto2016effective} by Carlotto, Chodosh and Eichmair played an important part where they showed among other things that any asymptotically flat manifold with non-negative scalar curvature admitting an unbounded area minimizing minimal surface must be isometric to the flat Euclidean space. The results in \cite{chodosh2017global} seem to be optimal in some sense (see also \cite{brendle2014large}). Moreover, it should be noted that they stand  in stark contrast to the situation in the Euclidean space. The presence of positive mass seems to rule out all but one isoperimetric surface. 
\\\indent
For any concept of quasi-local mass, it is natural to look for regions which contain a maximal amount of mass. Usually, one can only hope to find such regions if one fixes a certain geometric quantity such as the volume of the region $\Omega$ or the area of its boundary $\Sigma$. In the case of the Hawking mass, the latter seems to be the more natural quantity.
While isoperimetric surfaces enjoy non-negative Hawking mass, they in fact maximize Huisken's quasi-local isoperimetric mass when fixing the volume of $\Omega$.  Hence, when studying the Hawking mass, it might be a more natural problem to directly look  for maximizers of the Hawking mass when fixing the area.
This approach is equivalent to finding area-constrained minimizers  of the Willmore functional $\mathcal{W}$ which is defined to be
\begin{align*}
\mathcal{W}(\Sigma):=\frac14\int_\Sigma H^2 d\mu.
\end{align*}
While the isoperimetric problem can be formulated solely in terms of $C^0$-data, the Hawking mass depends on higher order quantities and therefore seems to be more complicated to investigate. In fact, the Euler-Lagrange equation for the Willmore functional is a fourth-order elliptic equation and cannot be studied with the same techniques as the second order problem of finding constant mean curvature surfaces. Nevertheless,  using a continuity method and integral curvature estimates, Lamm, Metzger and Schulze showed the following result (see \cite{lamm2011foliations} and section 2 for a more precise statement).
\begin{thm} Let $(M,g)$ be an asymptotically flat manifold which is $C^3-$close to Schwarzschild, with mass $m>0$ and decay coefficient $0<\eta\leq\eta_0$ for some constant $\eta_0$ depending only on $m$ and satisfies $|\operatorname{Sc}|\leq \eta r^{-5}$. Then there exists a constant $\lambda_0>0$ and a compact set $K$ depending only on $m$ and $\eta_0$ such that $M\setminus K$ is foliated by surfaces of Willmore type $\Sigma_\lambda$ where $\lambda \in(0,\lambda_0)$. Moreover, every sufficiently centered, strictly mean-convex sphere $\Sigma\subset M\setminus K$ which is of Willmore type belongs to this foliation. 
	\label{lmsthm rough}
\end{thm}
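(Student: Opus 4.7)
The plan is to run a continuity argument anchored at the explicit Willmore foliation of the Schwarzschild space. In $(M,g_S)$ the coordinate spheres $S_r := \{|x| = r\}$ are totally umbilic with constant mean curvature and constant Gauss curvature, so they automatically satisfy the area-constrained Willmore equation
\begin{align*}
\Delta_\Sigma H + H|\Acirc|^2 + H \operatorname{Ric}_g(\nu,\nu) = \lambda H
\end{align*}
for a Lagrange parameter $\lambda = \lambda(r)$, and this furnishes the reference foliation. For the perturbed metric $g = g_S + h$, I would write candidate leaves as normal graphs of a scalar function $u \in C^{4,\alpha}(S^2)$ over $S_{r(\lambda)}$, where $r(\lambda)$ is chosen so that $\lambda$ continues the Lagrange multiplier of the reference family. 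In these coordinates the Willmore-type equation becomes a quasilinear fourth-order elliptic PDE for $u$, and the goal is to solve it for every $\lambda \in (0, \lambda_0)$ with a solution that depends smoothly on $\lambda$ and glues up to a foliation.

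The first step is openness via the implicit function theorem. After rescaling the reference sphere to unit area, the linearization at $u = 0$ for $g = g_S$ has principal part $\Delta_{S^2}(\Delta_{S^2} + 2)$, whose kernel is spanned by the constants (absorbed by the area constraint and the freedom in $\lambda$) and the first-order spherical harmonics, which correspond to Euclidean translations. The subprincipal correction from the Schwarzschild mass, of size $m/r$, lifts the degeneracy on the translation modes, exactly as in the Huisken--Yau analysis of the CMC foliation; invertibility is then stable under a further perturbation by $h$ provided $\eta$ is small and $r$ is large, which also yields smooth dependence on both parameters.

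For closedness along a sequence $\lambda_i \to \lambda_* \in (0, \lambda_0)$, I would rescale each $\Sigma_{\lambda_i}$ to unit area and apply Kuwert--Sch\"atzle type integral curvature estimates together with $\varepsilon$-regularity for Willmore-type surfaces. Since the Willmore energy of each leaf is uniformly bounded and close to $16\pi$, one extracts a $W^{2,2}$-subsequential limit that is a round sphere, after which Schauder theory upgrades the convergence to $C^{4,\alpha}$. The assumption $|\operatorname{Sc}| \leq \eta r^{-5}$ enters here to control the zeroth-order term in the rescaled equation. A separate argument is needed to prevent the centers of the leaves from drifting to infinity, and I would handle this by testing a suitable mass-flux type quantity against the Schwarzschild correction to pin the Euclidean center.

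The hard part, as in \cite{lamm2011foliations}, is precisely the center-of-mass control just described: the translation modes of the linearized Willmore operator vanish in the Euclidean limit, and their invertibility along the entire continuity path relies on the Schwarzschild correction being of a definite sign and of a size that dominates the perturbation $h$. This forces a careful expansion of the Willmore operator up to the order at which the $m/r$ term appears and a bound on the remainder in a norm compatible with the resolvent estimate, which is where the $C^3$-closeness and the scalar curvature decay are both used. Once existence of the foliation is established, the uniqueness claim follows a posteriori: a sufficiently centered, strictly mean-convex Willmore-type sphere in $M \setminus K$ is, after rescaling, a small $C^{4,\alpha}$-graph over a unique coordinate sphere $S_r$, so the local uniqueness provided by the implicit function theorem forces it to coincide with the corresponding leaf $\Sigma_\lambda$.
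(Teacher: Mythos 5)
First, a point of order: this theorem is not proved in the present paper at all --- it is Theorem 1.1 of Lamm, Metzger and Schulze, quoted from \cite{lamm2011foliations} (and restated more precisely as Theorem \ref{lmstheorem}); the paper only records that the proof there proceeds by a continuity method combined with integral curvature estimates. Your outline does follow that blueprint (reference foliation by centred Schwarzschild spheres, which are indeed umbilic Willmore-type surfaces with $\lambda\approx 2mr^{-3}$; openness via the implicit function theorem for the fourth-order graph equation; closedness via Kuwert--Sch\"atzle-type estimates plus a separate argument pinning the centre), so at the level of strategy it is the right one.

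As a proof, however, there is a concrete gap at exactly the point you flag as ``the hard part''. You assert that the Schwarzschild correction ``of size $m/r$'' lifts the degeneracy on the translation modes ``exactly as in the Huisken--Yau analysis''. For the area-constrained Willmore operator this is false at first order in $m$: the first-order contribution of the mass to the translation sensitivity of $\mathcal{W}$ vanishes --- in the present paper this is the identity $\int_{\Sigma}\operatorname{Rc}_S(\nu_S,b)\,d\mu_S=0$, a consequence of the Pohozaev identity (\ref{pohozaev}) applied to the conformal Killing field $b$ --- and the degeneracy is lifted only at \emph{second} order in the mass. This is visible in the expansion of $f(a_e)$ in Section 5, where the $\tau_e$-dependence first enters at order $m^2\tau_e^2R_e^{-2}$, and in the resulting barycentre inequality $\partial_t\tau_g\le-160\,m^2R_g^{-6}\tau_g+\cdots$. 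A resolvent estimate on the $\ell=1$ modes based on a putative $O(m r^{-3})$ eigenvalue would not close, since the error terms one must beat are comparable to that naive first-order quantity; extracting the genuinely second-order positivity is the core difficulty of \cite{lamm2011foliations}, and it changes which norms and which decay hypotheses on $h$ and $\operatorname{Sc}$ are actually needed. Two smaller gaps: you do not address why the solutions obtained for different $\lambda$ are pairwise disjoint and sweep out all of $M\setminus K$ (the foliation property), and the uniqueness assertion requires first showing, via the a priori curvature estimates, that \emph{every} sufficiently centred, strictly mean-convex Willmore-type sphere in $M\setminus K$ is a small graph over a coordinate sphere before the local uniqueness from the implicit function theorem can be invoked.
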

Here, a surface of Willmore type is a critical point of the area prescribed Willmore energy. More precisely, every $\Sigma_\lambda$ satisfies the equation
$$
\Delta H+H(\operatorname{Rc}(\nu,\nu)+|\Acirc|^2+\lambda)=0.
$$
As for the isoperimetric problem, the positivity of the ADM mass is related to uniqueness which is evidently violated in the Euclidean space. The leaves of the foliation enjoy various desirable properties: if the scalar curvature is non-negative, the Hawking mass is positive and non-decreasing along the foliation and approaches the ADM mass as $\lambda\to 0$.  Given the results obtained for the isoperimetric problem, one is tempted to believe that in an asymptotic region, the leaves $\Sigma_\lambda$ are the global maximizers of the Hawking mass and perhaps the only surfaces of Willmore type with non-negative Hawking mass and a sufficiently large area. Up to now, this has not even been known in Schwarzschild. In fact, a result comparable to the one obtained in \cite{bray2001proof} cannot be expected as one can easily construct spheres which are close and homologous to the  horizon, but have arbitrarily large Hawking mass. On the other hand, it is possible to construct off-center surfaces whose Hawking mass is arbitrarily close to the ADM-mass which in turn equals the Hawking mass of the centred spheres (see the remark below Corollary \ref{main coro}). Hence, the problem of maximizing the Hawking mass even with fixed area seems particularly challenging from a variational point of view. \\ In this work we make partial progress in understanding the role of the centred spheres in the Schwarzschild space or more generally of the leaves $\Sigma_\lambda$ in the foliation constructed in \cite{lamm2011foliations} in asymptotically Schwarzschild spaces regarding the maximization of the Hawking mass. In fact, we show the following:
\begin{thm}
	Let $(M,g)$ be $C^3-$close to Schwarzschild with decay coefficient $0<\eta\leq\eta_0(m)$ and mass $m>0$ and let $\{\Sigma_\lambda | \lambda\in(0,\lambda_0)\}$ be the foliation from the previous theorem. Then there is a constant $\Lambda<\lambda_0$ depending only on $m$ and $\eta_0$ such that any immersed sphere $\Sigma\subset M$ enclosing $\Sigma_{\Lambda}$ which is sufficiently centred satisfies $m_H(\Sigma)\leq m_H(\Sigma_\lambda)$, where $\lambda$ is chosen such that $|\Sigma|=|\Sigma_\lambda|$. If equality holds, then $\Sigma=\Sigma_\lambda$. Moreover, if the scalar curvature of $(M,g)$ is non-negative, there holds $m_H(\Sigma)\leq m$ with equality if and only if $M$ is a centred sphere in the spatial Schwarzschild manifold. 
	\label{main theorem1}
\end{thm}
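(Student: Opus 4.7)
The plan is to deduce the theorem from a long-time convergence result for the area preserving Willmore flow (APWF), the $L^2$-gradient flow of $\mathcal{W}$ under the constraint that $|\Sigma|$ remain constant. A direct manipulation of the definition yields
$$
m_H(\Sigma)=\frac{|\Sigma|^{\frac12}}{(16\pi)^{\frac32}}\bigl(16\pi-4\mathcal{W}(\Sigma)\bigr),
$$
so the assertion $m_H(\Sigma)\le m_H(\Sigma_\lambda)$ under $|\Sigma|=|\Sigma_\lambda|$ is equivalent to $\mathcal{W}(\Sigma)\ge\mathcal{W}(\Sigma_\lambda)$. Hence it is enough to show that, among sufficiently centred immersed spheres enclosing $\Sigma_\Lambda$ with area $|\Sigma_\lambda|$, the leaf $\Sigma_\lambda$ is the unique minimiser of $\mathcal{W}$.

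The APWF has the form $\partial_tF=-\bigl(\tfrac12\Delta H+H(\operatorname{Rc}(\nu,\nu)+|\Acirc|^2)+\mu(t)H\bigr)\nu$ with a Lagrange multiplier $\mu(t)$ chosen so that $|\Sigma_t|$ is preserved; along it, $\mathcal{W}$ is non-increasing and the stationary surfaces are precisely the critical points of the area constrained Willmore energy described in Theorem \ref{lmsthm rough}. The central step is then to prove long-time existence and smooth convergence of the APWF started at any admissible initial datum $\Sigma$ to the unique leaf $\Sigma_\lambda$ of the same area. I would split this into: (i) \emph{a priori} position and curvature estimates forcing the evolving surfaces to remain in the asymptotic region where Theorem \ref{lmsthm rough} applies and to stay sufficiently centred; (ii) integral curvature estimates in the spirit of Kuwert--Schätzle, combined with the techniques of \cite{lamm2011foliations}, to rule out concentration of $|\Acirc|^2$ and obtain subsequential smooth convergence as $t\to\infty$; and (iii) a Lojasiewicz--Simon inequality, or an explicit spectral analysis of the linearised constrained Willmore operator about $\Sigma_\lambda$, to upgrade subsequential to full convergence and to identify the limit uniquely with $\Sigma_\lambda$.

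The delicate point is (i). The Euclidean translations lie, at leading order, in the kernel of the linearised Willmore operator, so a purely perturbative analysis near $\Sigma_\lambda$ cannot prevent the flow from drifting off-centre. The decisive input must come, as in \cite{huisken1996definition} and \cite{chodosh2017global} for the constant mean curvature setting, from the strict positivity of the mass $m>0$: it breaks the translation degeneracy on the scale $r\sim\lambda^{-1/2}$ and produces a restoring force towards the centre. Capturing this effect at the level of the fourth order APWF, uniformly in $\lambda\in(0,\Lambda)$ and at the low regularity $W^{2,2}$ suggested by the abstract, will be the main technical obstacle; it is also what forces the introduction of the threshold $\Lambda$ and what distinguishes the present statement from anything one could hope for in flat Euclidean space.

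Granted long-time convergence, monotonicity of $\mathcal{W}$ along the flow yields $\mathcal{W}(\Sigma)\ge\mathcal{W}(\Sigma_\lambda)$, that is $m_H(\Sigma)\le m_H(\Sigma_\lambda)$. If equality holds, $\mathcal{W}$ must be constant along the flow, so $\Sigma$ itself is a sufficiently centred, strictly mean-convex sphere of Willmore type enclosing $\Sigma_\Lambda$; the uniqueness part of Theorem \ref{lmsthm rough} then forces $\Sigma=\Sigma_\lambda$. Finally, if $\operatorname{Sc}\ge 0$, the Hawking masses along the foliation are monotone in $\lambda$ and satisfy $m_H(\Sigma_\lambda)\nearrow m$ as $\lambda\to 0$, as recalled before the statement; combined with the previous inequality this gives $m_H(\Sigma)\le m$. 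Equality forces $\Sigma=\Sigma_\lambda$ with $m_H(\Sigma_\lambda)=m$, which by the rigidity of the positive mass theorem together with the limiting behaviour of the foliation (cf.\ also \cite{brendle2013constant}) collapses $(M,g)$ onto spatial Schwarzschild and $\Sigma$ onto a centred sphere therein.
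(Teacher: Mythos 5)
Your overall strategy --- reduce the maximization statement to long-time existence and smooth convergence of the area preserving Willmore flow, with the barycentre drift controlled by the positivity of the mass --- is exactly the route the paper takes (Theorem \ref{main result} followed by Corollary \ref{main coro}), and your identification of the translation degeneracy and the restoring force produced by $m>0$ as the main obstacle is on target. There is, however, a genuine gap in the reduction itself. The hypothesis of the theorem is only that $\Sigma$ is sufficiently centred (a condition on $\tau_e$) and encloses $\Sigma_\Lambda$; nothing forces $\Sigma$ to be a small $W^{2,2}$-perturbation of a leaf. The flow machinery you invoke (the paper's Theorem \ref{main result}, resting on Jachan's Theorem \ref{jachan existence}) requires smallness of $|\Acirc|^2_{L^2(\Sigma)}$, equivalently an energy bound $\mathcal{W}(\Sigma)\leq 8\pi-\rho$; for a centred sphere with large $|\Acirc|^2_{L^2}$ the flow may develop a singularity and your argument says nothing about such $\Sigma$. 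The paper disposes of this case without any flow: by the integrated Gauss equation (\ref{gausseqn}),
$$
16\pi-\int_\Sigma H^2d\mu=-2\int_\Sigma|\Acirc|^2d\mu-4\int_\Sigma G(\nu,\nu)d\mu,
$$
and since $\int_\Sigma G(\nu,\nu)d\mu=O(R_e^{-1})$ in the asymptotic region, $|\Acirc|^2_{L^2(\Sigma)}\geq\epsilon$ forces $m_H(\Sigma)<0\leq m_H(\Sigma_\lambda)$ once $\Lambda$ is chosen small enough. You need this dichotomy (``flow back to a leaf, or have negative Hawking mass'') to make the reduction complete.

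Two smaller points. For identifying the limit and upgrading subsequential to full convergence you propose a Lojasiewicz--Simon inequality or a spectral analysis of the linearised operator; the paper instead checks that the subsequential limit is a strictly mean convex, centred sphere of Willmore type and invokes the uniqueness statement of Theorem \ref{lmstheorem}, which pins the limit down as the leaf of the prescribed area --- cleaner here, and it avoids setting up Lojasiewicz--Simon for the constrained functional. Second, in the nonnegative scalar curvature part, the rigidity needed when $m_H(\Sigma_\lambda)=m$ is not that of the positive mass theorem (which concerns $m=0$) but the rigidity in Huisken--Ilmanen's inverse mean curvature flow: running the flow from $\Sigma_\lambda$ gives $m_H(\Sigma_\lambda)\leq m$, and equality in the Geroch monotonicity forces the exterior of $\Sigma_\lambda$ to be isometric to a Schwarzschild exterior with $\Sigma_\lambda$ a centred sphere, which is how the paper concludes.
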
 
A more local version of this result in the deSitter-Schwarzschild space was shown by Maximo and Nunes, see \cite{maximo2012hawking}. They considered graphical surfaces with respect to the centred spheres and computed the second variation of the Hawking mass. Our approach relies instead on a stability result for the area preserving Willmore flow which we will discuss below. 
\begin{thm}
	Let $(M,g)$ be $C^3-$close to Schwarzschild with decay coefficient $0<\eta\leq\eta_0(m)$ and mass $m>0$ and let $\Sigma$ be an embedded sphere which is obtained as a small area-preserving $W^{2,2}-$perturbation of a leave of the foliation $\{\Sigma_\lambda\}$ from Theorem \ref{lmsthm rough}. Then the area preserving Willmore  flow starting at $\Sigma$ exists for all times and converges smoothly to one of the leaves in the foliation $\{\Sigma_\lambda\}$. 
	\label{main theorem2}
\end{thm}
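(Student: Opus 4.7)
The plan is to view the area preserving Willmore flow as the $L^2$-gradient flow of the Willmore functional $\mathcal{W}$ restricted to the codimension-one constraint surface $\{|\Sigma|=A\}$, where the Lagrange multiplier at a critical point is precisely the parameter $\lambda$ appearing in Theorem \ref{lmsthm rough}. Given an initial sphere $\Sigma$ close to a leaf $\Sigma_{\lambda^{*}}$, the area constraint $A:=|\Sigma|=|\Sigma_{\lambda^{*}}|$ uniquely selects that leaf as the candidate limit, and I would show the flow converges smoothly to it. I write $\Sigma$ (and later flowing surfaces) as a normal graph $\Sigma_f$ over $\Sigma_{\lambda^{*}}$ with height function $f$ satisfying the area constraint. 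Short-time existence and instantaneous smoothing from $W^{2,2}$ initial data would follow from standard quasilinear parabolic theory for fourth order flows in the style of Kuwert--Sch\"atzle, combined with $\varepsilon$-regularity to absorb the low regularity of the initial perturbation.

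The analytic heart of the argument is a \L{}ojasiewicz--Simon inequality for the constrained Willmore functional near $\Sigma_{\lambda^{*}}$: for $\Sigma_f$ sufficiently $W^{2,2}$-close to $\Sigma_{\lambda^{*}}$ with $|\Sigma_f|=A$, one should have
\begin{align*}
|\mathcal{W}(\Sigma_f)-\mathcal{W}(\Sigma_{\lambda^{*}})|^{1-\theta}\leq C\,\|\nabla_{L^{2}}(\mathcal{W}|_{\{|\cdot|=A\}})(\Sigma_f)\|_{L^{2}}
\end{align*}
for some $\theta\in(0,1/2]$ and $C>0$. To establish this I would analyze the Jacobi operator $L_{\lambda^{*}}$ associated with the constrained Euler--Lagrange equation above, which is a fourth order, self-adjoint elliptic operator. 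Thanks to the foliation property in Theorem \ref{lmsthm rough}, the only infinitesimal deformations of $\Sigma_{\lambda^{*}}$ through surfaces of Willmore type are tangent to the foliation itself, and the area constraint kills precisely this direction. Hence $L_{\lambda^{*}}$ restricted to the $L^{2}$-complement of the generator of the foliation is an isomorphism, and the analyticity of $\mathcal{W}$ allows one to invoke the abstract \L{}ojasiewicz--Simon framework of Chill and Simon to obtain the displayed inequality.

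Combining this with the dissipation identity $\frac{d}{dt}\mathcal{W}(\Sigma_t)=-\|\nabla_{L^{2}}(\mathcal{W}|_{\{|\cdot|=A\}})(\Sigma_t)\|_{L^{2}}^{2}$ in the standard way yields finite $L^{1}_{t}L^{2}_{x}$-length of the normal velocity, which controls the $W^{2,2}$-distance travelled. Choosing the initial perturbation small enough, a continuity argument traps the flow in the $W^{2,2}$-neighborhood where the \L{}ojasiewicz inequality applies, ruling out singularity formation and giving global existence. Kuwert--Sch\"atzle type interior estimates for the fourth order flow then upgrade convergence to smooth convergence, and the limit, being a smooth solution of the constrained Willmore equation in the asymptotic region with area $A$, must by the uniqueness clause of Theorem \ref{lmsthm rough} coincide with a leaf $\Sigma_\lambda$; by area conservation $\lambda=\lambda^{*}$.

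The main obstacle I anticipate is establishing the \L{}ojasiewicz--Simon inequality with constants that are uniform as $\lambda^{*}\to 0$, i.e.\ across the entire asymptotic foliation. This boils down to controlling the spectrum of $L_{\lambda^{*}}$ precisely enough to rule out spurious near-zero modes: in exact Schwarzschild the infinitesimal translations give an approximate kernel, and under the $C^{3}$-closeness assumption these modes are only broken at order $\eta r^{-2-j}$, so I must exploit the $\eta$-smallness of the perturbation of Schwarzschild together with the asymptotic geometry of $\Sigma_{\lambda^{*}}$ to push these approximate kernel directions off zero in a quantitative way, much in the spirit of the effective analyses in \cite{lamm2011foliations} and \cite{chodosh2017far}. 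A secondary difficulty is that the relevant function space for the perturbation is $W^{2,2}$, which sits at the borderline of the parabolic theory for the fourth order flow and forces the short-time existence step to be combined with an immediate smoothing estimate before the \L{}ojasiewicz machinery can be deployed.
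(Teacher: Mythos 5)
Your route is genuinely different from the paper's. The paper does not work with a \L{}ojasiewicz--Simon inequality at all: it invokes Jachan's long-time existence criterion (Theorem \ref{jachan existence}), which reduces everything to showing that the flow stays admissible, i.e.\ that $|\Acirc|_{L^2}$ stays small (automatic, since the flow decreases $\mathcal{W}$ and the Gauss equation converts this into control of $|\Acirc|_{L^2}$) and that the barycentre does not drift. The entire technical content is then a differential inequality $\partial_t\tau_g\leq -160m^2R_g^{-6}\tau_g+c(\tau_g^2+\epsilon+\eta)R_g^{-6}-c\partial_t\mathcal{W}$, obtained by recognizing the leading term in $\partial_t\tau_g$ as the translation sensitivity of the Schwarzschild Willmore energy and computing it explicitly on the approximating round sphere. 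The limit is then identified via the uniqueness clause of Theorem \ref{lmstheorem}, exactly as you propose. Your gradient-flow/\L{}ojasiewicz architecture is a legitimate alternative with precedents for the Willmore flow near round spheres, and if carried out it would give trajectory-length control and full (not just subsequential) convergence directly, without needing the uniqueness statement to upgrade subsequential convergence.

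However, there is a genuine gap at your key step, and it is precisely where the paper's real work lies. First, the coercivity of $L_{\lambda^*}$ modulo the constraint does not follow from Theorem \ref{lmsthm rough}: the uniqueness clause there concerns actual mean-convex, centred surfaces of Willmore type and is compatible with a degenerate second variation, so "the foliation property kills the kernel" is an assertion, not an argument. Second, and more importantly, you misidentify the mechanism controlling the near-kernel translation modes. You write that these modes "are only broken at order $\eta r^{-2-j}$" and that you would exploit the $\eta$-smallness to push them off zero. In fact the stabilizing breaking occurs already in exact Schwarzschild ($\eta=0$) and comes from the positivity of the mass: the paper's computation of $f(a_e)=\int_{S_{R_e}(a_e)}H_S^2\,d\mu_S$ yields a term $+32\pi m^2\tau_e^2R_e^{-2}$, so the second variation in the translation directions is positive of size $\sim m^2R^{-6}$ per unit $L^2$-norm --- small compared to the principal part of the fourth-order operator, but positive because $m>0$. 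The $\eta$-terms are the \emph{enemy}: they are error terms of a priori comparable size that must be dominated by the $m^2$ term, which is why the theorem requires $\eta\leq\eta_0(m)$. Without this explicit computation your \L{}ojasiewicz constant could a priori have the wrong sign on the translation modes (as it does in Euclidean space, where off-centre spheres are also critical), so the computation cannot be deferred; it is the heart of the proof in either approach. A secondary caveat: with constants obtained leaf by leaf your argument proves the literal statement of Theorem \ref{main theorem2}, but the paper's quantitative version (Theorem \ref{main result}) has $\epsilon,\delta$ depending only on $m,\eta_0$, which your scheme would only recover after making the spectral lower bound $\sim m^2R^{-6}$ and all error estimates uniform along the foliation.
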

For a more precise statement of Theorem \ref{main theorem1} and Theorem \ref{main theorem2} we refer to Theorem \ref{main result} and Corollary \ref{main coro}.
The Willmore flow was introduced by Kuwert and Sch\"atzle in \cite{kuwert2001willmore,kuwert2002gradient} as the $L^2-$gradient flow for the Willmore energy in the Euclidean space and it has been studied in various contexts ever since. The area preserving Willmore flow is the $L^2-$projection of the Willmore flow onto area preserving immersions and was introduced by Jachan in his PhD-thesis, see \cite{jachan2014area}. It is a smooth one-parameter family of surfaces leaving the area constant while decreasing the Willmore energy and hence increases the Hawking mass. Using methods similar to Kuwert and Schätlze in \cite{kuwert2001willmore,kuwert2002gradient,kuwert2004removability}, Jachan showed long time existence and subsequential convergence for topological spheres to a surface of Willmore type requiring a specific bound on the initial energy  and assuming that the flow avoids a sufficiently large compact set for all times. One might therefore expect that the area-preserving Willmore flow can be used to produce area-prescribed critical points of the Hawking mass. It is however in general not clear under which initial conditions this assumption can be verified. \\In order to prove Theorem \ref{main theorem2}, we verify the constraints of the long time existence result of \cite{jachan2014area}. By a result of Müller and deLellis, surfaces with small traceless part of the second fundamental form are $W^{2,2}-$close to a round sphere. From this it follows that proving long time existence eventually reduces to controlling the evolution of the barycentre. In order to do this, we derive a differential inequality which we calculate in terms of the approximating round sphere. It turns out that this inequality is governed by the positivity of mass. The crucial part is then to control the error terms which requires precise estimates of the evolving geometric quantities. Then, this argument can be used to show that the barycentre does not move too much if it is initially not too far away from the origin.  Finally, we can use the uniqueness statement in Theorem \ref{lmsthm rough}.  to identify the limit of the flow and deduce smooth convergence. Theorem \ref{main theorem1} then follows from Theorem \ref{main theorem2} and the fact that sufficiently centred surfaces can either be flown back to a leave of the foliation or have negative Hawking mass.
\\
The rest of this paper is organized as follows. In section 2, we fix some notation, collect results about asymptotically flat manifolds which are $C^3-$close to Schwarzschild and show that the area preserving Willmore flow of a small $W^{2,2}-$perturbation of a leave of the canonical foliation remains round and avoids a large compact set if its barycenter does not move too much.
In section 3, we prove general integral curvature estimates in the spirit of \cite{kuwert2001willmore} for asymptotically Schwarzschild manifolds.  In section 4, we combine these estimates with a careful analysis of the evolution equation for the Willmore energy to obtain precise a-priori estimates for certain geometric quantities under the area preserving Willmore flow. Finally, in section 5, we  derive a differential inequality for the barycentre to find that the evolution is governed by the translation sensitivity of the Willmore energy in the Schwarzschild space. We then proceed to  prove Theorem \ref{main theorem2}. For the convenience of the reader, we have included a summary of the argument used by Jachan in \cite{jachan2014area} in the appendix.
\\ \textbf{Acknowledgements.} The author would like to thank Felix Schulze for suggesting the problem and for many helpful discussions. This research was carried out at the Department of Mathematics of  University College London and the author would like to thank  for the hospitality. Finally, the author gratefully acknowledges financial support from the DAAD.

\section{Preliminaries}
Let $(M,g)$ be an a three dimensional, complete and asymptotically flat Riemannian manifold which is $C^3-$close to Schwarzschild with mass $m>0$ and decay coefficient $\eta>0$. More precisely, we assume that there is a compact set $K$ such that $M\setminus K$ is diffeomorphic to $R^{3}\setminus\overline{B_{\sigma}(0)}$ for some $\sigma>m/2$ and that the following estimate holds on $M\setminus K$:
\begin{align}
r^2|g-g_S|+r^3|\overline\nabla_g-\overline\nabla_S|+r^4|\operatorname{Rc}_g-\operatorname{Rc}_S|+r^5|\overline{\nabla}_g\operatorname{Rc}_g-\overline{\nabla}_S\operatorname{Rc}_S|\leq \eta.
\label{asymptotic behaviour}
\end{align}
Here, $r$ denotes the radial function of the asymptotic chart $\mathbb{R}^3\setminus \overline{B_{\sigma}(0)}$, $\overline\nabla_g$ the gradient of the ambient space and $\operatorname{Rm}_g$ and $\operatorname{Rc}_g$ the Riemann curvature tensor and the Ricci curvature of the ambient space, respectively. $g_S$ denotes the Schwarzschild metric with mass $m$, which is defined by
$$
g_S:=\bigg(1+\frac{m}{2r}\bigg)^4g_e=\phi^4 g_e.
$$
The subscripts $S$ and $e$ indicate that the geometric quantity is computed with respect to the Schwarzschild and the Euclidean metric, respectively. On the other hand, we will usually omit the subscript $g$.  The definition of being $C^{3}-$close to Schwarzschild is equivalent to the definition given in the introduction and the mass parameter $m$ is of course equal to the ADM-mass of $(M,g)$. 
Finally, we assume that the scalar curvature of $(M,g)$, denoted by $\operatorname{Sc}_g$, satisfies
\begin{align}
|r^5\operatorname{Sc}_g|\leq \eta.
\end{align}
Such manifolds were called asymptotically Schwarzschild in \cite{lamm2011foliations} and we will adopt this terminology from now on. The Schwarzschild manifold $(M_S,g_S)=(\mathbb{R}^3\setminus\{0\},g_S)$ is the model space for the problem studied in this paper. It models a single static black hole with mass $m>0$ and the metric being static is expressed in the following equation
\begin{align}
(\overline\Delta_S f) g_S -\overline\nabla^2_Sf+f\operatorname{Rc}_S= -\overline\nabla^2_Sf+f\operatorname{Rc}_S=0, \label{static equation}
\end{align}
where $f=(2-\phi)/\phi$ is the so-called potential function. It follows that  $(M_S,g_S)$ has vanishing scalar curvature and that the Ricci curvature is given by
\begin{align}
\operatorname{Rc}_S(\cdot,\cdot)=mr^{-3}\phi^{-2}(g_e(\cdot,\cdot)-3g_e(\partial_r,\cdot)g_e(\partial_r,\cdot)). \label{rc schwarzschild}
\end{align}
We consider an immersed, closed and orientable surface $\Sigma\subset M$ and denote its first fundamental form by $\gamma$, its connection by $\nabla$, its outward normal by $\nu$, its second fundamental form by $A$, the traceless part by $\Acirc$, the mean curvature by $H$ and the area element by $d\mu$. Moreover, we denote the induced curvature by $\operatorname{Rc}^\Sigma$ and $\operatorname{Rm}^\Sigma$, respectively. $\Sigma$ can also be regarded as an embedded surface in $(\mathbb{R}^3\setminus\overline{B_{\sigma}(0)},g_e)$ or $(\mathbb{R}^3\setminus\overline{B_{\sigma}(0)},g_S)$. We indicate the corresponding geometric quantities by the subscripts $e$ and $S$. If we want to emphasize the correspondence to $g$, we sometimes use the subscript $g$.
We use the letter $c$ for any constant that only depends on $m,\eta$ in a non-decreasing way. The meaning of such a constant will be different in most of the following inequalities. If a constant has a geometric dependency, we will explicitly state it.  We fix a chart at infinity and extend the radial parameter $r$ in a smooth way to all of $M$. We define $r_{\min}$ and $r_{\max}$ to be the minimal and maximal value of the radial function on $\Sigma$, respectively. As all of our results concern surfaces which are contained in the asymptotic region, we will always assume that $r_{\min}\geq R_0$ for some positive constant $R_0(\eta,m)$ which is to be determined. We let $x$ be the position vector in the asymptotic region. If $\Sigma$ is contained in the asymptotic region, we define the Euclidean barycentre, approximate radius and centring parameter by
$$
a_e=\frac{\int_\Sigma xd\mu_e }{|\Sigma|_g}, \qquad R_e:=\frac{|\Sigma|^{\frac12}_e}{\sqrt{4\pi}}, \qquad \tau_e=\frac{|a_e|}{R_e},
$$
respectively, and reiterate these definitions for the metrics $g$ and $g_S$\footnote{We still use the Euclidean norm in the definition of the centring parameter with respect to the metrics $g$ and $g_S$.}.  The Hawking mass $m_H$ of a surface $\Sigma$ is defined by
\begin{align}
m_H(\Sigma)=\frac{|\Sigma|^{1/2}}{(16\pi)^{3/2}}\bigg(16\pi-\int_\Sigma H^2d\mu \bigg)=\frac{|\Sigma|^{1/2}}{(16\pi)^{3/2}}\bigg(16\pi-4\mathcal{W}(\Sigma)\bigg),
\label{hawking mass def}
\end{align}
where $\mathcal{W}$ denotes the Willmore energy, that is,
$$
\mathcal{W}(\Sigma)=\frac14\int_\Sigma H^2d\mu.
$$
In \cite{lamm2011foliations}, Lamm, Metzger and Schulze studied area-prescribed critical points of the Willmore energy and called them surfaces of Willmore type. Such surfaces satisfy the equation
\begin{align}
\Delta H+H(\operatorname{Rc}(\nu,\nu)+|\Acirc|^2+\lambda)=0 \label{willmore type}
\end{align}
for some scalar parameter $\lambda$. As Lamm, Metzger and Schulze  showed by using a continuity method, an asymptotic region of an asymptotically Schwarzschild manifold is foliated by such surfaces. More precisely, they showed the following theorem.
\begin{thm} Let $(M,g)$ be  an asymptotically Schwarzschild manifold with mass $m$ and decay coefficient $\eta\leq\eta_0$ for some $\eta_0>0$ depending only on $m$. Then there exists a constant $\lambda_0>0$ and a compact set $K$ depending only on $m,\eta_0$ such that $M\setminus K$ is foliated by embedded spheres of Willmore type $\Sigma_\lambda$ where $\lambda \in(0,\lambda_0)$. Moreover, there are constants $\chi, \tilde \tau>0$ which only depend on $m,\eta_0$ such that any strictly mean convex sphere $\Sigma\subset M\setminus K$ of Willmore type satisfying $\tau_e\leq \tilde\tau$ and $R_e\leq \chi r_{\min}^2$ belongs to the foliation. 
	\label{lmstheorem}
\end{thm}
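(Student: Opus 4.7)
The plan is a continuity method, starting from the exact Schwarzschild space and deforming to the asymptotically Schwarzschild case. In $(M_S,g_S)$, the centred coordinate spheres $S_r$ are umbilic and, by rotational symmetry, satisfy the Willmore-type equation \eqref{willmore type} with a Lagrange multiplier $\lambda(r)\to 0$ as $r\to\infty$, yielding a base foliation parametrised by $\lambda$. I would then consider the path of metrics $g_t=g_S+t(g-g_S)$, $t\in[0,1]$, and show that the set of $t$ for which such a foliation exists over a fixed range $\lambda\in(0,\lambda_0)$ is both open and closed.

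For openness, given a Willmore-type sphere $\Sigma_\lambda$ in $g_t$, I would study normal graphs $u\mapsto\exp_{\Sigma_\lambda}(u\nu)$ and linearise the operator $F(u):=\Delta H+H(\operatorname{Rc}(\nu,\nu)+|\Acirc|^2+\lambda)$ together with the area constraint. On a nearly round sphere of area radius $R$ in Schwarzschild, the linearisation is, to leading order, $R^{-4}(\Delta_{\mathbb{S}^2}^2+2\Delta_{\mathbb{S}^2})$ plus a correction of size $m/R^3$ coming from $\operatorname{Rc}_S$. The degeneracy on the translation modes (first order spherical harmonics, which are kernel directions in flat space) is broken by this Schwarzschild correction, which contributes a strictly positive eigenvalue of order $m/R^3$; this is exactly where positivity of mass enters. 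Uniform invertibility with $R$-uniform bounds then allows the implicit function theorem to produce a nearby Willmore-type sphere for $t'$ close to $t$.

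For closedness one needs a priori estimates. Testing \eqref{willmore type} with $H$ gives an integral identity controlling $\int|\nabla H|^2$, $\int H^2|\Acirc|^2$ and $\lambda\int H^2$; combining this with the Gauss equation and Kuwert--Sch\"atzle type $\varepsilon$-regularity forces $L^2$-smallness of $\Acirc$ (with higher order estimates by bootstrapping). The De Lellis--M\"uller theorem then shows the surfaces are $W^{2,2}$-close to round spheres, hence to Euclidean coordinate spheres of approximate radius $R_e$ and centre $a_e$. The harder task is to control the centring parameter $\tau_e$: I would derive a Pohozaev-type identity by testing \eqref{willmore type} against the translation fields $X=\partial_i$, noting that in Schwarzschild the leading contribution is proportional to $mR_e^{-3}a_e$ with a definite sign, which bounds $\tau_e$ by the error terms from $h$, the ambient scalar curvature, and the lower-order pieces. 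The regime $R_e\leq\chi r_{\min}^2$ ensures these errors remain small compared to the main term, giving subsequential convergence to a Willmore-type sphere with parameter $\lambda_*$.

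For the second (uniqueness) statement, any strictly mean convex Willmore-type sphere $\Sigma\subset M\setminus K$ with $\tau_e\leq\tilde\tau$ and $R_e\leq\chi r_{\min}^2$ satisfies the same a priori estimates, hence is $C^{k,\alpha}$-close to a coordinate sphere. After matching its area to that of a leaf $\Sigma_\lambda$, the uniform invertibility of the linearisation then forces $\Sigma=\Sigma_\lambda$. The main obstacle throughout is the a priori control of $a_e$: the translation modes are exact kernel directions in flat space, so the whole argument is genuinely driven by $m>0$, and sharpening the Pohozaev identity enough to absorb the error contributions from $h$ and $\operatorname{Sc}$ is the most delicate technical point.
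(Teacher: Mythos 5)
This theorem is not proved in the paper at all: it is quoted verbatim as an external result of Lamm, Metzger and Schulze (\cite{lamm2011foliations}), so there is no internal proof to compare against. Your outline nonetheless reproduces the strategy of that reference faithfully — the continuity method from exact Schwarzschild, invertibility of the linearised constrained Willmore operator on the translation modes via the order $mR^{-3}$ contribution of $\operatorname{Rc}_S$, closedness via Kuwert--Sch\"atzle type curvature estimates combined with De Lellis--M\"uller roundness, and position control through the Pohozaev identity (which is precisely the identity the present paper later cites as (5.12) of \cite{lamm2011foliations}) — so it is essentially the same approach as the original proof.
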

 Another tool to find surfaces of Willmore type is the so-called area preserving Willmore flow which was introduced by Jachan in \cite{jachan2014area}. In this paper, we will study the evolution of spherical surfaces under the area preserving Willmore flow which is defined as follows. We say that a smooth family of surfaces $\{\Sigma_t|0\leq t\leq T\}$, $\Sigma_0=\Sigma$ flows by the area preserving Willmore flow with initial data $\Sigma$ if it satisfies the following evolution equation:
\begin{align}
\label{flow equation}
\frac{d}{dt}x=(\Delta H + H(\operatorname{Rc}(\nu,\nu)+|\Acirc|^2+\lambda))\nu=:W\nu+\lambda H\nu,
\end{align}
where the Lagrange parameter $\lambda$ is given by
\begin{align}
\lambda(t)=|H|^{-2}_{L^2(\Sigma_t)}\int_{\Sigma_t}(|\nabla H|^2-H^2\operatorname{Rc}(\nu,\nu)-H^2|\Acirc|^2)d\mu.
\label{lagrange}
\end{align}
The area preserving Willmore flow is the $L^2$-projection of the Willmore flow onto the class of area-preserving flows. It is easy to see that this evolution leaves the area constant, decreases the Willmore energy and consequently increases the Hawking mass. In fact, for a flow with normal speed $f$ there holds
\begin{align}
\frac{d}{dt}H=-\Delta f-f|A|^2-f\operatorname{Rc}(\nu,\nu),\qquad \frac{d}{dt}d\mu=fHd\mu.
\label{ev equations}
\end{align}
On the other hand, integration by parts reveals that
\begin{align}
\lambda(t)\int_{\Sigma}H^2d\mu=\int_{\Sigma} WHd\mu. \label{lambda in terms of W}
\end{align}
Plugging in $f=W+\lambda H$, integrating by parts and using the identity $2|A|^2=H^2+2|\Acirc|^2$ we find
\begin{align}
\frac{d}{dt}\int_{\Sigma} H^2d\mu&=\int_{\Sigma}\bigg(2H(-\Delta(W+H\lambda)-|A|^2(W+H\lambda)-\operatorname{Rc}(\nu,\nu)(W+H\lambda))+H^3(W+\lambda H)\bigg)d\mu \notag \\ \notag
&=\int_{\Sigma} \bigg(2W(-\Delta H-|\Acirc|^2-\operatorname{Rc}(\nu,\nu))+\lambda(|\nabla H|^2-|\Acirc|^2-\operatorname{Rc}(\nu,\nu))\bigg)d\mu\\
&=-2\int_{\Sigma} W^2d\mu+2\bigg(\int_{\Sigma} H^2\bigg)^{-1}d\mu\bigg(\int_\Sigma WHd\mu\bigg)^2 \label{h2 growth}
\\&\leq 0, \notag
\end{align}
where we used Hölder's inequality in the last step. Similarly,
\begin{align}
\frac{d}{dt}|\Sigma|=\int_{\Sigma} WHd\mu+\lambda(t)\int_{\Sigma}H^2d\mu=0,\label{area const}
\end{align}
where we used (\ref{lambda in terms of W}). In \cite[Theorem 1.6, Theorem 1.7]{jachan2014area}, Jachan showed the following:
\begin{thm}
	Let $\Sigma$ be an immersed surface homologous to a coordinate sphere. Then a solution to the equation (\ref{flow equation}) exists on a maximal time interval $[0,T)$.  Moreover, if $\mathcal{W}(\Sigma)\leq 8\pi -\rho$ for some constant $\rho>0$,  there exists $R_0=R_0(m,\eta,\rho^{-1})$ such that if $r_{\min}(t)\geq R_0$ for all $t\in[0,T)$, then $T=\infty$. If this is true,  flow subsequentially converges smoothly to a surface of Willmore type.
	\label{jachan existence}
\end{thm}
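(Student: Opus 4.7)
The plan is to follow the Kuwert--Sch\"atzle framework for the Willmore flow adapted to the Riemannian, area-preserving setting. Short-time existence is handled by standard quasi-linear parabolic theory: writing $\Sigma_t$ as a normal graph over $\Sigma_0$ in a tubular neighbourhood, the principal symbol of the right-hand side of \eqref{flow equation} in the graph function is $-\Delta^2$, so the linearization is strongly elliptic of fourth order. The non-local term $\lambda(t)$ depends smoothly on the embedding via \eqref{lagrange} and acts as a lower-order perturbation, so standard $L^p$ theory for fourth-order parabolic systems yields a solution on a maximal interval $[0,T)$.

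The core step is long-time existence, for which I would employ an $\varepsilon$-regularity and lifespan argument. The key technical statement is that there exist $\varepsilon_0=\varepsilon_0(m,\eta)$ and $c>0$ such that if $\int_{\Sigma_{t_0}\cap B_r(p)}|A|^2\,d\mu\le\varepsilon_0$ on a ball of sufficiently small radius $r$ in the asymptotic chart, then $\|\nabla^k A\|_{L^\infty}$ is controlled on a smaller concentric ball over a short time interval depending only on $r$ and $k$. The proof rests on the parabolic evolution equations for $|\nabla^k A|^2$ under \eqref{flow equation}, the Michael--Simon Sobolev inequality, and Gagliardo--Nirenberg interpolation; the extra ambient-curvature terms involving $\operatorname{Rc}$ and $\overline\nabla\operatorname{Rc}$ are absorbed using \eqref{asymptotic behaviour} as long as $r_{\min}(t)\ge R_0$, and the zeroth-order term $\lambda H$ is harmless once one knows $|\lambda(t)|\le c$. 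The latter bound follows from \eqref{lagrange}, the conservation of area \eqref{area const}, the decay of $\operatorname{Rc}$ in the asymptotic region, and the global upper bound on $\int H^2$ from \eqref{h2 growth}.

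To verify the $\varepsilon$-regularity hypothesis globally, I would use the energy bound. Since $\mathcal{W}(\Sigma_t)\le 8\pi-\rho$ for all $t\in[0,T)$ by \eqref{h2 growth}, a blow-up analysis in the spirit of Kuwert--Sch\"atzle rules out local curvature concentration: any concentration would produce a non-trivial Willmore limit in $\mathbb{R}^3$ (the ambient metric rescales to Euclidean because of \eqref{asymptotic behaviour}) with Willmore energy at least $4\pi$, and the residual piece would also carry at least $4\pi$, contradicting the strict gap $\rho>0$. Thus the local $\varepsilon_0$-bound holds everywhere as long as $r_{\min}(t)\ge R_0$, giving uniform $C^k$ bounds on $A$; standard continuation then forces $T=\infty$. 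For subsequential convergence, integrating \eqref{h2 growth} in time gives
\[
\int_0^\infty\!\!\int_{\Sigma_t}W^2\,d\mu\,dt\le\tfrac12\int_{\Sigma_0}H^2\,d\mu,
\]
so there is a sequence $t_k\to\infty$ with $\int_{\Sigma_{t_k}}W^2\,d\mu\to 0$. The uniform regularity together with $r_{\min}(t_k)\ge R_0$ allows extracting a smoothly convergent subsequence with limit $\Sigma_\infty$; passing to the limit in \eqref{lagrange} yields $\lambda(t_k)\to\lambda_\infty$, and passing to the limit in \eqref{flow equation} shows $W+\lambda_\infty H=0$ on $\Sigma_\infty$, so that $\Sigma_\infty$ is of Willmore type.

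The hard part will be the $\varepsilon$-regularity argument in this curved, area-preserving setting: at each order of the interpolation the ambient-curvature terms and their derivatives must be absorbed using the decay in \eqref{asymptotic behaviour}, and the non-local Lagrange multiplier must be prevented from driving concentration. The hypothesis $r_{\min}(t)\ge R_0$ is used precisely to furnish the decay of $\operatorname{Rc}$ and, combined with the area-preserving constraint, a stable lower bound on $|\Sigma|$; these together yield $|\lambda(t)|\le c$ for all $t$ and close the estimates.
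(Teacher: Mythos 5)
Your overall framework (short-time existence by quasilinear parabolic theory, localized $\varepsilon$-regularity with a lifespan argument \`a la Kuwert--Sch\"atzle, blow-up at a point of curvature concentration, and extraction of a Willmore-type limit from the time-integrability of the squared velocity) is exactly the one used in \cite{jachan2014area} and outlined in the appendix. However, two of your key steps do not hold as stated. First, the claimed pointwise bound $|\lambda(t)|\le c$ does not follow from (\ref{lagrange}), area conservation, and the bound on $\int H^2$: the multiplier contains $\int_{\Sigma_t}|\nabla H|^2d\mu$, a second-order quantity that is not controlled by $\int H^2$ before one has curvature estimates --- and the curvature estimates in turn require control of $\lambda$, so your argument is circular. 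What one can and must prove instead is $\lambda\in L^2((0,T))$. This is one of the genuinely new ingredients of the area-preserving setting: with $f=g(x,\nu)$ one uses the near-scaling identities $|\Sigma_t|\le\int_{\Sigma_t}fH\,d\mu$ and $\big|\int_{\Sigma_t}fW\,d\mu\big|\le cR_0^{-1}\int_{\Sigma_t}H^2d\mu$ (exact in $\mathbb{R}^3$ by scale invariance of $\mathcal{W}$, approximately valid for $r_{\min}\ge R_0$) to obtain $\lambda(t)^2\le -c\,\mathcal{W}(\Sigma_t)\,\partial_t\mathcal{W}(\Sigma_t)+cR_0^{-1}\mathcal{W}(\Sigma_t)^2|\Sigma_t|^{-2}$, and then integrates in time; all subsequent estimates must be phrased in terms of $|\lambda|_{L^2((0,T))}$, not $\sup_t|\lambda|$.

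Second, the mechanism you propose for exploiting $\mathcal{W}(\Sigma)\le 8\pi-\rho$ is not correct. The blow-up at a concentration point produces a single complete \emph{non-compact} Willmore immersion $\hat\Sigma\subset\mathbb{R}^3$ which is not a plane; there is no ``residual piece carrying at least $4\pi$'', and a non-compact Willmore surface need not carry $4\pi$ of energy (a plane carries none), so the $4\pi+4\pi$ accounting does not close and does not explain the $8\pi$ threshold. The actual contradiction is obtained by inverting $\hat\Sigma$ to a compact Willmore surface with a point singularity and Euclidean energy below $8\pi$, removing the singularity via the Kuwert--Sch\"atzle removability theorem \cite{kuwert2004removability} (this is where the $8\pi$ bound enters, through the Li--Yau inequality), and then invoking Bryant's classification of Willmore spheres \cite{bryant1984duality} to conclude that the inversion is a round sphere, hence $\hat\Sigma$ a plane --- a contradiction. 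Without removability and the classification, the hypothesis $\mathcal{W}(\Sigma)\le 8\pi-\rho$ yields nothing. A minor further slip: integrating (\ref{h2 growth}) bounds $\int_0^\infty\int_{\Sigma_t}(W+\lambda H)^2d\mu\,dt$, not $\int_0^\infty\int_{\Sigma_t}W^2d\mu\,dt$; it is the full normal velocity that tends to zero along a subsequence.
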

For the convenience of the reader, we have included an outline of the proof in the appendix. We also remark that a similar problem has been studied by Link in \cite{link2013gradient}. From (\ref{h2 growth}) and (\ref{area const}) it follows that the Hawking mass is non-decreasing along the area preserving Willmore flow. It is consequently a natural flow to find local maximizers of the Hawking mass. In this paper, we will study the stability of the foliation constructed in \cite{lamm2011foliations} under small $W^{2,2}-$perturbations with respect to the area preserving Willmore flow. To this end, we say that a surface $\Sigma$ is \textit{admissible} if it is an embedded sphere satisfying the following conditions:
  \begin{align}
 \label{hypothesis a}
 |\Acirc|_{L^2(\Sigma)}^2 \leq \epsilon, \qquad \tau_e \leq \delta,  \qquad r_{\min} \geq  R_0,
 \end{align}
for some constants $\epsilon, \delta>0$. Moreover, we say that an area preserving Willmore flow is \textit{admissible} up to time $T>0$ if every $\Sigma_t$ with $0\leq t\leq T$ is admissible. Before we proceed, we need the following lemma to relate geometric quantities with regards to the different background metrics. The lemma is a straight-forward consequence of the asymptotic behaviour of the metric (\ref{asymptotic behaviour}), c.f. \cite[section 1]{lamm2011foliations}.
\begin{lem} Let $\Sigma$ be an embedded sphere satisfying $r_{\min}\geq R_0$ for some $R_0(m,\eta)$ sufficiently large.
	There is a universal constant $c$ such that
	\begin{align*}
	\nu_S=\phi^{-2}\nu_e, \qquad&
	|\nu_S-\nu|\leq c\eta r^{-2}, \\
	\nabla_S=\nabla_e+\phi^{-1}\nabla_e\phi, \qquad&
	|\nabla_S-\nabla|\leq c\eta r^{-3}, \\
	H_S=\phi^{-2}H_e-2mr^{-2}\phi^{-3}\nu_e\cdot \partial_r, \qquad
	&|H_S-H|\leq c\eta r^{-3}+c\eta r^{-2}|A| , \\
	\Acirc_S=\phi^{2}\Acirc_e, \qquad& |\Acirc_S-\Acirc|\leq c\eta r^{-3}+c\eta r^{-2}|A|, \\
	|A_S-A_e|\leq cmr^{-2}+cmr^{-1}A_S, \qquad& |A_S-A|\leq c\eta r^{-3}+c\eta r^{-2}|A|, \\
	d\mu_S=\phi^4 d\mu_e, \qquad& |d\mu_S -d\mu|\leq c\eta r^{-2}.
	\end{align*}
	as well as
	\begin{align*}
	|\nabla_SA_S-\nabla_e A_e|&\leq cmr^{-3}+cmr^{-2}|A_S|+cmr^{-1}|\nabla_S A_S|, \\
	|\nabla_SA_S-\nabla A|&\leq c\eta r^{-4}+c\eta r^{-3}|A|+c\eta r^{-2}|\nabla A|, \\|\nabla_SH_S-\nabla_e H_e|&\leq cmr^{-3}+cmr^{-2}|A_S|+cmr^{-1}|\nabla_S A_S|, 
	\\|\nabla_SH_S-\nabla H|&\leq c\eta r^{-4}+c\eta r^{-3}|A|+c\eta r^{-2}|\nabla A|.
	\end{align*}
	By conformal invariance, there holds 
	\begin{align*}
	|\Acirc_S|_{L^2(\Sigma)}&=|\Acirc_e|_{L^2(\Sigma)}, \\
	||H_S|^2_{L^2(\Sigma)}-|H_e|^2_{L^2(\Sigma)}|&\leq cmr_{\min}^{-1}|H_S|^2_{L^2(\Sigma)}.
	\end{align*} Finally,  there holds
	\begin{align*}
	|\Acirc|^2_{L^2(\Sigma)}&\leq (1+c\eta r^{-2}_{\min})|\Acirc_e|^2_{L^2(\Sigma)}+c\eta r_{\min}^{-3} |H|^2_{L^2(\Sigma)},
	\\||H|^2_{L^2(\Sigma)}-|H_S|^2_{L^2(\Sigma)}|&\leq c\eta r_{\min}^{-2}+ c\eta r_{\min}^{-2} |H|_{L^2(\Sigma)}^2. 
	\end{align*}
	\label{identities}
\end{lem}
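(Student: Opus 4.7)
The plan is to derive each identity by combining two ingredients: the conformal change formulas associated with $g_S = \phi^4 g_e$, where $\phi = 1 + m/(2r)$ and hence $\overline\nabla_e\phi = -(m/2r^2)\partial_r$, and a direct perturbative expansion based on the asymptotic bound $|g-g_S| + r|\overline\nabla_g - \overline\nabla_S| \leq c\eta r^{-2}$ from \eqref{asymptotic behaviour}. No single identity is deep; the content of the lemma is a careful bookkeeping of how these two mechanisms interact.

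First I would establish the identities between Schwarzschild and Euclidean quantities. The relation $\nu_S = \phi^{-2}\nu_e$ is forced by $g_S(\nu_S,\nu_S)=1$, and the connection identity $\nabla_S = \nabla_e + \phi^{-1}\nabla_e\phi$ is the standard conformal Christoffel formula. For the second fundamental form I would use the transformation $A_S = \phi^2 A_e + \phi\, \nu_e(\phi)\, g_e$, which immediately yields the conformal invariance $\Acirc_S = \phi^2 \Acirc_e$ and, upon tracing against $g_S^{-1} = \phi^{-4}g_e^{-1}$, the mean curvature formula $H_S = \phi^{-2}H_e - 2mr^{-2}\phi^{-3}\nu_e\cdot\partial_r$. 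The $|A_S - A_e|$ bound is then immediate from $\nu_e(\phi) = O(mr^{-2})$ together with the coupling $|g_e|_{A_S} \lesssim |A_S|$. The measure relation $d\mu_S = \phi^4 d\mu_e$ is tautological.

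Next I would compare quantities computed with $g$ and with $g_S$. Writing $h = g - g_S$ with $|h| + r|\overline\nabla h| \leq c\eta r^{-2}$, every geometric quantity depends smoothly on the ambient metric and its derivatives, and a first-order expansion in $h$ gives the stated estimate. For instance, $\nu - \nu_S$ solves a linear system of the form $g_S(\nu - \nu_S,\cdot) = -h(\nu_S,\cdot) + O(|h|^2)$ on $T\Sigma \oplus \mathbb{R}\nu_S$, yielding $|\nu - \nu_S| \leq c\eta r^{-2}$. For the second fundamental form $A(X,Y) = -g(\overline\nabla_X\nu, Y)$, the difference $A - A_S$ accumulates the $h$-error in $g$, the $\overline\nabla_g - \overline\nabla_S$ error of order $\eta r^{-3}$, and a cross term $|h||A|$, producing $|A - A_S| \leq c\eta r^{-3} + c\eta r^{-2}|A|$. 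The derivative estimates follow from the same argument applied one order higher, invoking the $\overline\nabla\operatorname{Rc}$ bound from \eqref{asymptotic behaviour} when commutators of $\overline\nabla_g$ and $\overline\nabla_S$ must be controlled.

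Finally I would prove the $L^2$-identities. The equality $|\Acirc_S|^2_{L^2(\Sigma)} = |\Acirc_e|^2_{L^2(\Sigma)}$ is precisely the dimension-two conformal invariance of the traceless $L^2$-energy, combining $\Acirc_S = \phi^2 \Acirc_e$, $d\mu_S = \phi^4 d\mu_e$ and $|\cdot|^2_{\gamma_S} = \phi^{-8}|\cdot|^2_{\gamma_e}$ on symmetric $(0,2)$-tensors. For the mean curvature I would expand $H_S^2\, d\mu_S$ via the pointwise formula to get $\int H_S^2 d\mu_S - \int H_e^2 d\mu_e = -4m\int r^{-2}\phi^{-1} H_e(\nu_e\cdot\partial_r) d\mu_e + \text{l.o.t.}$, then control the cross term by H\"older together with $\int_\Sigma r^{-4} d\mu_e \leq cr_{\min}^{-2}|\Sigma|_e$ and the topological Willmore bound $|H_e|^2_{L^2} \geq 16\pi$. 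The last two inequalities then follow by integrating the pointwise perturbation estimates of the previous paragraph, using $|A|^2 = |\Acirc|^2 + H^2/2$ and the topological sphere estimate $\int_\Sigma |\Acirc|^2 d\mu \leq c\int_\Sigma H^2 d\mu$ to absorb all $|A|^2$-contributions into $|H|^2_{L^2}$. The main obstacle is exactly this final absorption: one must carefully separate error terms scaling like $r_{\min}^{-2}$ alone from those scaling like $r_{\min}^{-2}|H|^2_{L^2}$ so that no uncontrollable linear-in-$A$ contribution survives, and ensure that all constants depend on $m,\eta$ monotonically as required.
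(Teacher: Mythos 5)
Your overall route—conformal-change formulas for the pair $(g_e,g_S)$ combined with a first-order expansion in $h=g-g_S$ using \eqref{asymptotic behaviour}—is exactly what the paper intends; note that the paper gives no proof of this lemma at all, deferring to \cite[section 1]{lamm2011foliations}, so your write-up is the "missing" argument rather than an alternative to one. Two points need repair, one cosmetic and one substantive.

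The cosmetic one: the conformal transformation of the second fundamental form under $g_S=\phi^4 g_e=e^{2\omega}g_e$ with $\omega=2\log\phi$ is $A_S=\phi^2\bigl(A_e+2\phi^{-1}\nu_e(\phi)\,\gamma_e\bigr)$, not $\phi^2A_e+\phi\,\nu_e(\phi)\,g_e$; with your coefficient, tracing against $\gamma_S^{-1}=\phi^{-4}\gamma_e^{-1}$ produces $H_S=\phi^{-2}H_e-mr^{-2}\phi^{-3}\nu_e\cdot\partial_r$, off by a factor $2$ from the stated identity. The traceless part and the $|A_S-A_e|$ bound are unaffected.

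The substantive one concerns the $L^2$ comparison of $H_S$ and $H_e$. You control the cross term $4m\int_\Sigma r^{-2}\phi^{-1}H_e\,(\nu_e\cdot\partial_r)\,d\mu_e$ by H\"older together with $\int_\Sigma r^{-4}d\mu_e\leq c\,r_{\min}^{-2}|\Sigma|_e$, which yields a bound of the form $c\,m\,r_{\min}^{-2}|\Sigma|_e^{1/2}|H_e|_{L^2}$. To reach the claimed $c\,m\,r_{\min}^{-1}|H_S|^2_{L^2}$ you would then need $|\Sigma|_e\leq c\,r_{\min}^2|H_e|^2_{L^2}$, which is false for a general embedded sphere with $r_{\min}\geq R_0$ (the lemma assumes neither $|\Acirc_e|_{L^2}$ small nor $\tau_e$ bounded, so $|\Sigma|_e$ is not comparable to $r_{\min}^2$; Lemma \ref{r compare}, which would give that comparison, is proved only later and under extra hypotheses). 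The correct device at this level of generality is the scale-invariant estimate
\begin{align*}
\int_\Sigma r^{-2}\,d\mu_e\leq c\int_\Sigma H_e^2\,d\mu_e,
\end{align*}
a consequence of Simon's monotonicity formula (equivalently of the Li--Yau/Michael--Simon circle of ideas), after which
\begin{align*}
m\int_\Sigma r^{-2}|H_e|\,d\mu_e\leq m\,r_{\min}^{-1}\Bigl(\int_\Sigma r^{-2}d\mu_e\Bigr)^{1/2}|H_e|_{L^2(\Sigma)}\leq c\,m\,r_{\min}^{-1}|H_e|^2_{L^2(\Sigma)},
\end{align*}
and the quadratic term $4m^2\int_\Sigma r^{-4}\phi^{-2}(\nu_e\cdot\partial_r)^2d\mu_e$ is handled the same way. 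The identical substitution is needed in your final absorption step for the last two inequalities, where integrals such as $\eta\int_\Sigma r^{-3}|\Acirc|\,d\mu$ and $\eta^2\int_\Sigma r^{-4}\,d\mu$ must be closed without invoking $|\Sigma|$. With that replacement your argument goes through.
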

Admissible surfaces enjoy various properties:
Müller and de Lellis showed (see \cite{de2005optimal,de2006c}) that in the Euclidean space, a surface $\Sigma$ with small traceless part of the second fundamental form in the $L^2-$sense is $W^{2,2}-$close to a round sphere $S$ and that there is a conformal parametrization mapping $S$ onto $\Sigma$. Using the conformal parametrization, geometric quantities on $S$ and $\Sigma$ can be related. The corresponding quantities on $S$ will be indicated by a tilde. The exact statement of the result in \cite{de2005optimal,de2006c} is as follows.
\begin{lem}
	\label{muldelel}
	Let $\Sigma\subset\mathbb{R}^3$ be a surface satisfying $|\Acirc_e|^2_{L^2(\Sigma)}<8\pi$. Then $\Sigma$ is a sphere and there exists a universal constant $c$ independent of $\Sigma$ and a conformal parametrization $\psi:S:=S^2_{a_e}(R_e)\to\mathbb{R}^3$ such that
	\begin{align*}
	|\psi-\operatorname{id}|_{L^2(S)}&\leq c	R_e^2 |\Acirc_e|_{L^2(\Sigma)}, \\
	|\nabla_e \psi-\tilde \nabla_e\operatorname{id}|_{L^2(S)}&\leq c	R_e |\Acirc_e|_{L^2(\Sigma)}, \\
	|\nabla_e ^2 \psi-\tilde \nabla_e ^2 \operatorname{id}|_{L^2(S)}&\leq c |\Acirc_e|_{L^2(\Sigma)}, \\
	|\nu_e\circ \psi - \tilde \nu_e\circ \operatorname{id}|_{L^2(S)}& \leq c R_e |\Acirc_e|_{L^2(\Sigma)}, \\
	|\nabla_e \nu_e\circ \psi - \tilde\nabla_e \tilde \nu_e\circ \operatorname{id}|_{L^2(S)}& \leq c |\Acirc_e|_{L^2(\Sigma)}, \\
	|h^2-1|_{L^\infty(S)} &\leq c |\Acirc_e|_{L^2(\Sigma)},
	\end{align*}
	where $\operatorname{id}:S\to\mathbb{R}^3$ is the identity, $h$ the conformal factor of $\psi$ and $\nu_e,\tilde \nu_e$ the outward normals of $\psi$ and $\operatorname{id}$, respectively. Moreover, the Sobolev embedding theorem implies
	$$
	|\psi-\operatorname{id}|_{L^\infty(S)}\leq c	R_e |\Acirc_e|_{L^2(\Sigma)} \\
	$$ 
	and one easily obtains
	$$
	|\tilde A_e-A_e|_{L^2(S)}\leq c |\Acirc_e|_{L^2(\Sigma)}.
	$$
\end{lem}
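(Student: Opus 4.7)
The plan is to follow the strategy of De Lellis and M\"uller in \cite{de2005optimal, de2006c}, from which the bulk of the statement is essentially a direct quotation, and then to derive the final inequality $|\tilde A_e - A_e|_{L^2(S)} \leq c|\Acirc_e|_{L^2(\Sigma)}$ as an easy consequence.

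First, I would verify that $\Sigma$ is a topological sphere and in fact embedded. Using the identity $K = H_e^2/4 - |\Acirc_e|^2/2$ and Gauss--Bonnet, one has
\begin{equation*}
\tfrac14 \int_\Sigma H_e^2 \, d\mu_e = 4\pi(1-g) + \tfrac12 |\Acirc_e|^2_{L^2(\Sigma)},
\end{equation*}
so the hypothesis combined with the Li--Yau inequality $\int H_e^2/4 \geq 4\pi k$ rules out $g\geq 1$ and forces the maximal multiplicity $k$ to equal one. Uniformization then provides a conformal diffeomorphism from the round two-sphere onto $\Sigma$; post-composing with a M\"obius transformation and the affine map sending the source to $S = S^2_{a_e}(R_e)$ yields the parametrization $\psi$, with conformal factor $h$ defined by the requirement that $\psi$ pulls back the Euclidean metric to $h^2$ times the round metric on $S$.

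The central analytic step is to control $h$. It satisfies a Liouville-type equation whose inhomogeneity, via $K = H_e^2/4 - |\Acirc_e|^2/2$, is small in $L^2$ thanks to the hypothesis and the displayed identity above. The \emph{main obstacle}, and the heart of the De Lellis--M\"uller theorem, is to upgrade this to the sharp bound $|h^2-1|_{L^\infty(S)} \leq c|\Acirc_e|_{L^2(\Sigma)}$. This relies on a Wente-type estimate combined with a careful M\"obius normalization that kills the kernel of the linearization of the conformal Laplacian on $S^2$, which is spanned by the restrictions of the ambient coordinate functions. Once the $L^\infty$ bound on $h^2 - 1$ is available, the $W^{2,2}$-proximity of $\psi$ to $\operatorname{id}$, and of $\nu_e\circ\psi$ to $\tilde\nu_e$, follow from standard elliptic estimates for the second order system relating $\psi$, $h$ and the Gauss map on $S$.

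For the final inequality, I would express the second fundamental forms as $A_e(X,Y) = -\langle \nabla_e^2 \psi(X,Y),\, \nu_e\circ\psi\rangle$ and $\tilde A_e(X,Y) = -\langle \tilde\nabla_e^2 \operatorname{id}(X,Y),\, \tilde\nu_e\rangle$ and expand the difference. Using the $L^\infty$ control on $\tilde\nabla_e^2\operatorname{id}$ and $\tilde\nu_e$ on $S$ together with the $L^2$ bounds on $\nabla_e^2\psi - \tilde\nabla_e^2\operatorname{id}$ and $\nu_e\circ\psi - \tilde\nu_e$ already obtained, one gets
\begin{equation*}
|\tilde A_e - A_e|_{L^2(S)} \leq c\bigl(|\nabla_e^2\psi - \tilde\nabla_e^2\operatorname{id}|_{L^2(S)} + R_e^{-2}\,|\nu_e\circ\psi - \tilde\nu_e|_{L^2(S)}\bigr) \leq c\,|\Acirc_e|_{L^2(\Sigma)},
\end{equation*}
which completes the argument.
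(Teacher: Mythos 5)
Your proposal matches the paper's treatment of this lemma: the paper simply quotes the De Lellis--M\"uller theorems \cite{de2005optimal,de2006c} for the core estimates, and your Gauss--Bonnet/Li--Yau argument for sphericity and embeddedness together with your expansion of $\tilde A_e-A_e$ against the two already-established $L^2$ bounds are exactly the right way to fill in the easy addenda. The only blemish is the factor $R_e^{-2}$ in your final display, which dimensionally should be $R_e^{-1}$ (since $|\tilde\nabla_e^2\operatorname{id}|\sim R_e^{-1}$); this does not affect the conclusion, as $R_e^{-1}|\nu_e\circ\psi-\tilde\nu_e|_{L^2(S)}\leq c|\Acirc_e|_{L^2(\Sigma)}$ still gives the claimed bound.
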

Using the previous two lemmas, we can relate $r_{\min},r_{\max},R_e$ provided $\epsilon,\delta$ are chosen sufficiently small in (\ref{hypothesis a}):
\begin{lem} Let $|\Acirc_e|^2_{L^2}<8\pi$. There holds
	$$
	(1-\tau_e-c|\Acirc_e|_{L^2})R_e\leq r_{\min} \leq r_{\max} \leq (1+\tau_e+c|\Acirc_e|_{L^2})R_e.
	$$
	There are constants $\kappa,\tau_0>0$ such that  
	$$c^{-1}R_e\leq r_{\min} \leq r_{\max} \leq cR_e,
	$$
	provided $|\Acirc_e|_{L^2}<\kappa$ and $\tau_e \leq \tau_0$. In particular, $\epsilon,\delta$ in (\ref{hypothesis a}) can be chosen such that  the previous estimate is true for any admissible surface.
	\label{r compare}
\end{lem}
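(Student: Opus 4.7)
The plan is to reduce the statement to the De Lellis–Müller result (Lemma~\ref{muldelel}) together with an application of the triangle inequality in the asymptotic chart. Since $|\Acirc_e|^2_{L^2(\Sigma)}<8\pi$, Lemma~\ref{muldelel} supplies a conformal parametrization $\psi\colon S\to\Sigma$ from the round sphere $S=S^2_{a_e}(R_e)$ satisfying
$$
|\psi-\operatorname{id}|_{L^\infty(S)}\leq c\,R_e\,|\Acirc_e|_{L^2(\Sigma)}.
$$
This is the only analytic input; the rest is elementary.

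For the first inequality, I would note that every point of $\Sigma$ is of the form $\psi(y)$ for some $y\in S$, and every $y\in S$ satisfies $R_e-|a_e|\leq |y|\leq R_e+|a_e|$ by the triangle inequality (using $\tau_e<1$ so that the origin lies in the ball bounded by $S$). Combining this with the $L^\infty$ closeness of $\psi$ and $\operatorname{id}$ yields
$$
R_e-|a_e|-cR_e|\Acirc_e|_{L^2}\leq |\psi(y)|\leq R_e+|a_e|+cR_e|\Acirc_e|_{L^2},
$$
and taking the min and max over $y\in S$ gives exactly the claimed bound after dividing by $R_e$ and recalling $\tau_e=|a_e|/R_e$.

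For the second assertion, I would just fix $\kappa,\tau_0>0$ so small that $\tau_0+c\kappa\leq 1/2$; then $R_e/2\leq r_{\min}\leq r_{\max}\leq (3/2)R_e$, which is the desired comparability with $c=2$. To upgrade this to admissible surfaces, the task is to translate the ambient hypothesis $|\Acirc|^2_{L^2(\Sigma)}\leq\epsilon$ of (\ref{hypothesis a}) into the Euclidean hypothesis $|\Acirc_e|_{L^2}<\kappa$. For this I would use the conformal invariance $|\Acirc_S|_{L^2}=|\Acirc_e|_{L^2}$ from Lemma~\ref{identities} together with the pointwise bound $|\Acirc_S-\Acirc|\leq c\eta r^{-3}+c\eta r^{-2}|A|$: squaring, integrating over $\Sigma$, and using $r\geq R_0$ to absorb the error terms (since $2|A|^2=H^2+2|\Acirc|^2$ and $|H|_{L^2}^2$ is controlled once the surface is roughly spherical), one obtains $|\Acirc_e|^2_{L^2}\leq 2|\Acirc|^2_{L^2}+c\eta R_0^{-1}$. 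Choosing $\epsilon$ and $R_0^{-1}$ small in terms of $\kappa$, and $\delta\leq\tau_0$, closes the argument.

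There is no real obstacle here; the only mild point to watch is avoiding circularity in the last step — one cannot use $r_{\max}\leq cR_e$ to control the error terms in Lemma~\ref{identities} since that is what is being proved. This is circumvented by expressing all error terms in terms of the lower bound $r^{-1}\leq R_0^{-1}$, which is given a priori by admissibility.
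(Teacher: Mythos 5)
Your argument is correct and is exactly the route the paper intends (the paper offers no written proof beyond the remark that the lemma follows from the two preceding lemmas): the De Lellis--M\"uller $L^\infty$ bound from Lemma \ref{muldelel} plus the triangle inequality on $S^2_{a_e}(R_e)$ gives the two-sided radial estimate, and Lemma \ref{identities} transfers the smallness of $|\Acirc|_{L^2}$ to $|\Acirc_e|_{L^2}$. The only point to make fully precise is your claim that $|H|^2_{L^2}$ is ``controlled'': the bound (\ref{l2 curvature bound}) is derived only \emph{after} this lemma, so to avoid circularity one should instead invoke the Euclidean Gauss--Bonnet identity $\int_\Sigma H_e^2\,d\mu_e=16\pi+2\int_\Sigma|\Acirc_e|^2\,d\mu_e$ and absorb the resulting error term $c\eta R_0^{-3}|\Acirc_e|^2_{L^2}$ (whose coefficient is small) into the left-hand side, which closes the loop you correctly flagged.
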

We also need to compare the quantities $|\Sigma|,R,\tau,a$ with respect to the different background metrics.

\begin{lem}
	Let $\Sigma$ be an admissible surface. Then we have the following estimates
	\begin{align*}
	|\Sigma|_g c^{-1}\leq &|\Sigma|_e  \leq |\Sigma|_g c,
	\\
	|R_g| c^{-1}\leq &|R_e| \leq |R_g| c,
	\\
	|R_g-R_S|\leq c\eta R_e^{-1}, & \qquad |R_S-R_e|\leq cm,
	\\
	|a_g-a_e|\leq c\eta R_e^{-1}+cm, & \qquad 
	|\tau_g-\tau_e|\leq c\eta R_e^{-2}+cmR_e^{-1}.
	\end{align*}
	Moreover, let $\overline{R_S}:=\phi^2(R_e)R_e$, then
	$$
	|\overline{R_S}-R_S|\leq cm(\tau_e+|\Acirc_e|_{L^2(\Sigma)}).
	$$

	\label{ more comp}
\end{lem}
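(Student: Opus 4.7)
The plan is to reduce all the required estimates to pointwise comparisons of the three volume elements $d\mu_g$, $d\mu_S=\phi^4 d\mu_e$ and $d\mu_e$ via Lemma~\ref{identities}, combined with the admissibility bound $r_{\min}\sim r_{\max}\sim R_e$ from Lemma~\ref{r compare}. Concretely, Lemma~\ref{identities} gives $|d\mu_g-d\mu_S|\leq c\eta r^{-2}d\mu_e$, while the explicit form $\phi=1+m/(2r)$ yields $|\phi^4-1|\leq cm/r$ and $|\overline{\nabla}\phi^4|\leq cm/r^2$ on $\{r\geq R_0\}$. All comparisons between the listed geometric quantities will follow from integrating these pointwise bounds.

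The first block of estimates is essentially straightforward integration. Using $r\geq cR_e$ and $|\Sigma|_e\leq cR_e^2$ one gets $||\Sigma|_g-|\Sigma|_S|\leq c\eta$ and $||\Sigma|_S-|\Sigma|_e|\leq cmR_e$, so $|\Sigma|_g\sim|\Sigma|_e$ once $R_0(m,\eta)$ is large enough; the estimates on $R_g-R_S$ and $R_S-R_e$ then follow by writing $R_g^2-R_S^2=(|\Sigma|_g-|\Sigma|_S)/(4\pi)$, respectively $R_S^2-R_e^2=(|\Sigma|_S-|\Sigma|_e)/(4\pi)$, and dividing by the corresponding sum of radii, which is $\sim R_e$. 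For the barycentre one additionally uses $|x|\leq r_{\max}\leq cR_e$ to obtain $|\int_\Sigma x\,d\mu_g-\int_\Sigma x\,d\mu_e|\leq c\eta R_e+cmR_e^2$, then splits algebraically
\[
a_g-a_e=\frac{\int x\,d\mu_g-\int x\,d\mu_e}{|\Sigma|_g}+\int x\,d\mu_e\cdot\frac{|\Sigma|_e-|\Sigma|_g}{|\Sigma|_g\,|\Sigma|_e}
\]
and combines with the previous area bounds to obtain the claimed estimate. The $\tau$-estimate follows from the analogous split $\tau_g-\tau_e=(|a_g|-|a_e|)/R_g+|a_e|(R_e-R_g)/(R_g R_e)$ combined with $|a_e|\leq cR_e$ and $|R_g-R_e|\leq|R_g-R_S|+|R_S-R_e|$.

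The final estimate on $|\overline{R_S}-R_S|$ is the delicate part and the main obstacle. The plan is to compare $|\Sigma|_S$ with the Schwarzschild area $4\pi\phi(R_e)^4R_e^2=4\pi\overline{R_S}^2$ of the centred Euclidean sphere $S_0:=S^2_0(R_e)$, factoring through the approximating sphere $S:=S^2_{a_e}(R_e)$ provided by Lemma~\ref{muldelel}, equipped with its conformal parametrization $\psi:S\to\Sigma$ of conformal factor $h$. Writing
\[
|\Sigma|_S-|S_0|_S=\bigl(|\Sigma|_S-|S|_S\bigr)+\bigl(|S|_S-|S_0|_S\bigr),
\]
I would treat the two brackets separately. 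For the first, $\int_\Sigma\phi^4 d\mu_e=\int_S\phi^4\!\circ\psi\cdot h^2\,d\tilde\mu_e$ splits into $\int_S(\phi^4\!\circ\psi-\phi^4)h^2\,d\tilde\mu_e$, controlled by $|\overline{\nabla}\phi^4|\leq cm/r^2$ and Cauchy--Schwarz by $cmR_e|\Acirc_e|_{L^2}$, and the piece $\int_S\phi^4(h^2-1)\,d\tilde\mu_e$. The crucial observation is the identity $\int_S(h^2-1)\,d\tilde\mu_e=|\Sigma|_e-|S|_e=0$, which is forced by the very definition of $R_e$; this allows one to rewrite the piece as $\int_S(\phi^4-1)(h^2-1)\,d\tilde\mu_e$, gaining the factor $|\phi^4-1|\leq cm/R_e$ that converts a bare $|\Acirc_e|_{L^2}$ bound into the desired $m|\Acirc_e|_{L^2}$. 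For the second bracket, parametrising $S$ by $y=a_e+R_e\omega$ with $\omega\in S^2$ and Taylor expanding $\phi^4(|y|)$ around $r=R_e$ in powers of $a_e\cdot\omega$, the linear term is killed by $\int_{S^2}a_e\cdot\omega\,d\omega=0$, leaving an error of order $mR_e\tau_e^2\leq cmR_e\tau_e$. Dividing $|\Sigma|_S-|S_0|_S$ by $R_S+\overline{R_S}\sim R_e$ yields the claim. The subtle point is precisely this double cancellation of linear contributions; without either of them the estimate would be of order $mR_e$, far too weak for the application.
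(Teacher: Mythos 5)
Your argument is correct, and since the paper states this lemma without proof your write-up fills the gap in exactly the way the authors intend: the first two blocks are routine integrations of the pointwise comparisons $|d\mu_g-d\mu_S|\leq c\eta r^{-2}d\mu_e$ and $|\phi^4-1|\leq cm r^{-1}$ combined with $r\sim R_e$ from Lemma~\ref{r compare}, followed by dividing differences of squares by sums of radii. You correctly identify the only non-trivial point, namely the last estimate, and both cancellations you invoke there are genuine and necessary: $\int_S(h^2-1)\,d\tilde\mu_e=0$ because $|\Sigma|_e=|S|_e=4\pi R_e^2$ by the definition of $R_e$, and the linear term in the Taylor expansion of $\phi^4(|a_e+R_e\omega|)$ integrates to zero over $S^2$ by symmetry, so the error is $O(mR_e(\tau_e+|\Acirc_e|_{L^2(\Sigma)}))$ at the level of areas and the claimed bound follows after dividing by $R_S+\overline{R_S}\sim R_e$.
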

Another useful tool is an integrated version of the Gauss equation. More precisely, any sphere $\Sigma$ satisfies
\begin{align}
16\pi-\int_\Sigma H^2=-2\int_\Sigma |\Acirc|^2d\mu-4\int_\Sigma\operatorname{G}(\nu,\nu)d\mu, \label{gausseqn}
\end{align}
where $G=\operatorname{Rc}-\frac12 \gamma \operatorname{Sc}$ is the Einstein tensor. It follows from (\ref{gausseqn}), Lemma \ref{r compare} and (\ref{rc schwarzschild})  that any admissible surface enjoys  uniform bounds on $|H|_{L^2(\Sigma)}$ and $|A|_{L^2(\Sigma)}$, that is,
\begin{align}
|H|_{L^2(\Sigma)}, |A|_{L^2(\Sigma)}\leq c(\epsilon,\delta). \label{l2 curvature bound}
\end{align}
Finally, we need the Michael-Simon-Sobolev inequality which can, for instance, be found as Proposition 5.4 in \cite{huisken1996definition}.
\begin{lem}
	If $r_{\min}\geq R_0$ for some $R_0(\eta,m)$ sufficiently large, then any smooth function $f$ satisfies
	$$
	\int_\Sigma |f|^2d\mu \leq c \bigg(\int_\Sigma (|\nabla f|+|fH|)d\mu\bigg)^2.
	$$
	\label{michael simon sobolev}
\end{lem}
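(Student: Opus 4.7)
My strategy is to reduce the statement to the classical Michael-Simon-Sobolev inequality in Euclidean space: for any closed smooth surface $\Sigma\subset\mathbb{R}^3$ and smooth function $f$, one has
$$
\left(\int_\Sigma f^2\,d\mu_e\right)^{1/2}\leq c\int_\Sigma\big(|\nabla_e f|+|f||H_e|\big)\,d\mu_e
$$
with a universal constant. Since $(M,g)$ is modelled on $(\mathbb{R}^3,g_e)$ through the chart at infinity, any $\Sigma$ with $r_{\min}\geq R_0$ sits inside $\mathbb{R}^3\setminus\overline{B_\sigma(0)}$ and the Euclidean inequality applies directly.

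The next step is to use the comparisons in Lemma \ref{identities}, together with the elementary bound $|\phi-1|\leq cmr^{-1}$, to convert each Euclidean quantity to its $g$-counterpart. For $r_{\min}\geq R_0$ large one obtains $d\mu_e=(1+O(R_0^{-1}))d\mu$ and $|\nabla_e f|\leq(1+O(R_0^{-1}))|\nabla f|$, each costing only factors close to one. The identity $H_e=\phi^2 H_S+2mr^{-2}\phi^{-1}(\nu_e\cdot\partial_r)$ combined with $|H_S-H|\leq c\eta r^{-3}+c\eta r^{-2}|A|$ then yields the pointwise bound
$$
|H_e|\leq\big(1+O(R_0^{-1})\big)|H|+c(m+\eta)r^{-2}\big(1+|A|\big),
$$
which upon integration produces an additional error term of the form $c(m+\eta)r_{\min}^{-2}\int_\Sigma|f|(1+|A|)\,d\mu$ on the right-hand side of the transported inequality.

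The main obstacle is absorbing this error term. Splitting $|A|\leq|H|+\sqrt{2}|\Acirc|$ via the identity $2|A|^2=H^2+2|\Acirc|^2$, the piece involving $|H|$ is absorbed into the main term $c\int_\Sigma|f||H|\,d\mu$ once $R_0$ is sufficiently large. The remaining $|\Acirc|$ and constant contributions are then absorbed into the left-hand side by a H\"older-type argument combined with a standard covering/iteration scheme that exploits the smallness of $(m+\eta)R_0^{-2}$; this is the delicate technical part, since one has no a priori area bound on $\Sigma$ and must therefore argue locally. Choosing $R_0=R_0(m,\eta)$ sufficiently large then yields the claimed inequality with a constant $c$ depending only on $m$ and $\eta$. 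This absorption procedure is precisely the content of the proof of \cite[Proposition 5.4]{huisken1996definition}, to which I would refer for the detailed execution.
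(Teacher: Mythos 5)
The paper offers no argument for this lemma beyond the citation of \cite[Proposition 5.4]{huisken1996definition}, so the question is whether your reduction to the Euclidean Michael--Simon inequality is actually complete. It is not: the absorption step at the end is a genuine gap, not a routine technicality. Your pointwise comparison of mean curvatures produces, via Lemma \ref{identities}, the error term
$$
c(m+\eta)\,r_{\min}^{-2}\int_\Sigma |f|\,(1+|A|)\,d\mu,
$$
and after splitting off the part of $|A|$ proportional to $|H|$ you are left with $c\eta\, r_{\min}^{-2}\int_\Sigma|f||\Acirc|\,d\mu$ and $c\,r_{\min}^{-2}\int_\Sigma|f|\,d\mu$. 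H\"older bounds these by $c\eta R_0^{-2}|f|_{L^2(\Sigma)}|\Acirc|_{L^2(\Sigma)}$ and $cR_0^{-2}|f|_{L^2(\Sigma)}|\Sigma|^{1/2}$ respectively, so absorbing them into the left-hand side requires a priori bounds on $|\Acirc|_{L^2(\Sigma)}$ and on $|\Sigma|$. Neither appears among the hypotheses: the only assumption is $r_{\min}\geq R_0$, and the lemma is invoked in the appendix precisely for flows whose curvature may concentrate, so one cannot smuggle in the bound (\ref{l2 curvature bound}). Taking $R_0$ large does not help, since $|\Acirc|_{L^2(\Sigma)}$ and $|\Sigma|$ may grow with $R_0$. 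The appeal to a ``covering/iteration scheme'' is unsubstantiated, and it mischaracterizes the cited proof, which does not perform such an absorption.

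The way to close the gap is to avoid the pointwise mean-curvature comparison altogether. The Michael--Simon inequality is derived from the coarea formula together with the first variation identity relating $\int_\Sigma \operatorname{div}_\Sigma X\, d\mu$ to $\int_\Sigma H\, g(X,\nu)\,d\mu$, applied to Euclidean radial vector fields centred at points of $\Sigma$. Running that argument directly in the metric $g$, the only comparisons needed are between $\operatorname{div}^g_\Sigma X$ and $\operatorname{div}^e_\Sigma X$, between $d\mu$ and $d\mu_e$, and between $g(X,\nu)$ and the Euclidean pairing; all of these are controlled by the $C^0$ and $C^1$ closeness of $g$ to $g_e$ in (\ref{asymptotic behaviour}) with relative errors $O(R_0^{-1})$, and the second fundamental form never enters. (The residual additive $O(r^{-2})$ term can be handled by testing the divergence identity with $|f|\,x/|x|$, which yields $\int_\Sigma |f|r^{-1}d\mu\leq c\int_\Sigma(|\nabla f|+|f||H|)\,d\mu$ and hence beats the extra factor $R_0^{-1}$.) This is the route behind \cite[Proposition 5.4]{huisken1996definition} and the Hoffman--Spruck generalization. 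If you insist on the metric-comparison route, you must add a hypothesis bounding $|\Acirc|_{L^2(\Sigma)}$ and the area, which would make the lemma too weak for the use the paper puts it to.
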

 We finish this section by showing that an area preserving Willmore flow can only cease to be admissible if its barycentre moves outwards. We also prove an estimate for the excess Willmore energy.
\begin{lem}
	Let $\Sigma_t, t\in[0,T),$ be an admissible Willmore flow starting at a surface $\Sigma_0=\Sigma$ which satisfies $|\Acirc|_{L^2(\Sigma)}<\epsilon/2, \tau_e<\delta/2$ and $r_{\min}\geq NR_0$ for some constant $N>1$. If $R_0$ and $N$ are sufficiently large, then the following  estimates hold:
	$$
	|\Acirc|^2_{L^2(\Sigma_t)}(t)\leq \frac34 \epsilon,
\qquad
	\mathcal{W}(t)-\mathcal{W}(0)\leq \epsilon, \qquad
	r_{\min}\geq 2R_0.
	$$
	\label{excess estimate}
\end{lem}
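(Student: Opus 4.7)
The plan is to establish the three improved bounds sequentially, relying on three ingredients: the area preservation of the flow (\ref{area const}), the monotonicity of the Willmore energy (\ref{h2 growth}), and the integrated Gauss equation (\ref{gausseqn}), which converts control of $\mathcal{W}$ into control of $|\Acirc|^2_{L^2}$ up to ambient curvature integrals. Admissibility of the flow already provides the weak bounds $|\Acirc|^2_{L^2}(t)\leq \epsilon$, $\tau_e(t)\leq\delta$ and $r_{\min}(t)\geq R_0$; the task is to propagate the strengthened initial conditions.

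First I would control $r_{\min}$ from below. Since (\ref{area const}) preserves $|\Sigma_t|_g$, and hence $R_g$, Lemma \ref{ more comp} gives $|R_e(t)-R_e(0)|\leq cm$. Combining admissibility with Lemma \ref{r compare} yields
$$
r_{\min}(t)\geq (1-\delta-c\sqrt{\epsilon})R_e(t) \geq \tfrac{1}{2}\bigl(R_e(0)-cm\bigr) \geq \tfrac{1}{2}\bigl(c^{-1}NR_0-cm\bigr),
$$
which exceeds $2R_0$ once $N$ is taken large enough depending on $m$ and the constants of Lemmas \ref{r compare} and \ref{ more comp}. This step uses only admissibility at time $t$ together with area preservation, so there is no circular dependence on the other two claimed bounds.

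For the second bound, I apply (\ref{gausseqn}) at $t=0$ and at general $t$ and subtract:
$$
|\Acirc|^2_{L^2}(t)=|\Acirc|^2_{L^2}(0)+2\bigl(\mathcal{W}(t)-\mathcal{W}(0)\bigr)+2\int_{\Sigma_0}G(\nu,\nu)\,d\mu_0-2\int_{\Sigma_t}G(\nu,\nu)\,d\mu_t.
$$
The Willmore difference is non-positive by (\ref{h2 growth}). The pointwise estimate $|G(\nu,\nu)|\leq cmr^{-3}+c\eta r^{-4}+c\eta r^{-5}$, which follows from (\ref{rc schwarzschild}), (\ref{asymptotic behaviour}) and the scalar curvature decay, together with $|\Sigma|\leq cR_e^2\leq cr_{\min}^2$ from Lemma \ref{r compare}, gives each residual integral $\leq c(m,\eta)/r_{\min}\leq c(m,\eta)/R_0$. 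Choosing $R_0$ large enough depending on $m,\eta,\epsilon$ makes each integral at most $\epsilon/16$, so $|\Acirc|^2_{L^2}(t)<\epsilon/2+\epsilon/4=3\epsilon/4$. The third assertion $\mathcal{W}(t)-\mathcal{W}(0)\leq\epsilon$ is in fact immediate, indeed with $0$ on the right, from (\ref{h2 growth}).

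No significant analytic obstacle is anticipated; the subtlety lies in the hierarchical choice of parameters. One fixes $\epsilon,\delta$ small enough for Lemma \ref{r compare} to apply, then $R_0$ large depending on $m,\eta,\epsilon$ to kill the ambient curvature error in the Gauss identity, and finally $N$ large depending on $R_0$ and $m$ to absorb the $O(m)$ drift of $R_e$ and reach the improvement $r_{\min}(t)\geq 2R_0$.
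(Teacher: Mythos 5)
Your argument is correct and follows essentially the same route as the paper: the constancy of $R_g$ under the flow gives the $r_{\min}$ bound, and the integrated Gauss equation (\ref{gausseqn}) together with the monotonicity of $\mathcal{W}$ and the estimate $\int_{\Sigma}G(\nu,\nu)\,d\mu=O(R_e^{-1})$ gives the $|\Acirc|^2_{L^2}$ bound. Note only that the paper's proof of the middle assertion actually bounds the \emph{decrease} $\mathcal{W}(0)-\mathcal{W}(t)\leq\epsilon$ (the sign in the statement is presumably a typo, since the inequality as literally written is, as you observe, trivial); your Gauss-equation identity delivers that reversed bound as well, so nothing essential is missing.
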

\begin{proof}
Lemma \ref{r compare} and Lemma \ref{ more comp} imply the crude estimate $\int_{\Sigma} G(\nu,\nu)d\mu\leq cR_e^{-1}$. Using (\ref{gausseqn}) and the fact that the flow decreases the Willmore energy we obtain
\begin{align*}
0&\geq |\Acirc|_{L^2(\Sigma_t)}^2-|\Acirc|_{L^2(\Sigma_0)}^2-cR_e^{-1}.
\end{align*}
Hence, the claim follows if $R_0$ is sufficiently large such that $cR_e^{-1}\leq cR_0^{-1}<\epsilon/4$. For the second claim we trivially estimate
\begin{align*}
\mathcal{W}(0)-\mathcal{W}(t)\leq& |\Acirc|_{L^2(\Sigma_0)}^2+cR_e^{-1}<\epsilon.
\end{align*}
 For the last claim, we note that according to Lemma \ref{r compare} and Lemma \ref{ more comp} there holds $r_{\min}(t)\geq c^{-1} R_g(t)$ as well as
$R_g(0)\geq c^{-1} r_{\min}(0)\geq c^{-1}N R_0$. Since $R_g(t)=R_g(0)$ the claim now follows if $N$  is chosen sufficiently large.
\end{proof}
\begin{rema}
	According to a classical inequality by Li and Yau (\cite{li1982new}) embeddedness is automatically implied by the smallness of $|\Acirc|^2_{L^2(\Sigma_t)}$.
\end{rema}
It follows from the previous lemma that a flow can only cease to be admissible if $\tau_e$ reaches $\delta$. In order to study the evolution of $\tau_e$ we need to to establish precise curvature estimates. 
\section{Integral curvature estimates}
In this section, we prove general curvature estimates for embedded spheres with small traceless part of the second fundamental form in the $L^2-$sense. In the next section, we will use these to obtain precise estimates for the evolution of certain geometric quantities. However, this section might be of independent interest, too. Unless otherwise stated, we only assume that $\Sigma$ is spherical, that the conclusion of Lemma \ref{r compare} holds, that $r_{\min}\geq R_0$  and that the estimate (\ref{l2 curvature bound}) is valid. We will then state all estimates in terms of $R_e$.  As before, we denote the connection of $\Sigma$ by $\nabla$ and the connection of the ambient space by $\overline{\nabla}$. We also use the common $*$-notation to summarize geometric terms. In order to obtain estimates for the different components of the second fundamental form, we follow the approach of \cite[section 2]{kuwert2001willmore}. However, we need to take the effect of the ambient curvature into account. We need the following lemma which follows from a straight-forward computation:
\begin{lem}
	\label{der swap}
	Let $\psi\in \Omega^k(\Sigma)$ for some $k\in\mathbb{N}$, $p\in \Sigma$ and $e_1,e_2$ be and orthonormal frame of $\Sigma$ at $p$. Let $X_i\in\{e_1,e_2\}$, $1\leq i\leq k$. There holds
	\begin{align*}
	&(\nabla\nabla^*\psi-\nabla^*\nabla\psi)(X_1,...,X_k)\\=&\psi(\operatorname{Rm^\Sigma}(X_1,e_i,e_i),X_2,...,X_k)
	+\sum_{j=2}^{k}\psi(e_i,\dots,\operatorname{Rm}^\Sigma(X_1,e_i,X_j),...,X_k)-\nabla^*T_\psi(X_1,...,X_k),
	\end{align*}
	where $\nabla^*=-\operatorname{div}$ is the adjoint of $\nabla$ and
	$$
	T_\psi(X_0,..X_k)=\nabla_{X_0}\psi(X_1,..,X_k)-\nabla_{X_1}\psi(X_0,..,X_k).
	$$
\end{lem}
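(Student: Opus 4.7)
The plan is to verify the identity pointwise by direct computation in a geodesic normal frame, so all Christoffel-type corrections disappear and the commutator of covariant derivatives reduces cleanly to a curvature action.

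First, I would fix $p\in\Sigma$ and extend the orthonormal frame $e_1,e_2$ to a local orthonormal frame satisfying $\nabla_{X}e_i|_{p}=0$ for every tangent vector $X$ (this is possible by parallel-transporting along radial geodesics through $p$). With this choice, the coefficients from differentiating the frame vanish at $p$, and I obtain the two identities
\begin{align*}
(\nabla\nabla^{*}\psi)(X_1,\ldots,X_k)\bigl|_{p}
&=-\sum_{i}(\nabla_{X_1}\nabla_{e_i}\psi)(e_i,X_2,\ldots,X_k),\\
(\nabla^{*}\nabla\psi)(X_1,\ldots,X_k)\bigl|_{p}
&=-\sum_{i}(\nabla_{e_i}\nabla_{e_i}\psi)(X_1,\ldots,X_k),
\end{align*}
simply by unwinding the definitions of $\nabla$ and $\nabla^{*}=-\operatorname{div}$.

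Next, I would rewrite the first expression using the tensorial commutator
\begin{equation*}
\nabla_{X_1}\nabla_{e_i}\psi-\nabla_{e_i}\nabla_{X_1}\psi=R(X_1,e_i)\cdot\psi,
\end{equation*}
where, on a covariant $k$-tensor, $R(X,Y)\cdot\psi$ acts by $-\sum_{j}\psi(\ldots,R(X,Y)Y_j,\ldots)$. Contracting the first curvature slot against $e_i$ and summing in $i$ produces exactly the two intrinsic Riemann terms in the claimed formula: the first comes from the application of $R^{\Sigma}(X_1,e_i)$ to the slot occupied by $e_i$ and yields $\psi(\operatorname{Rm}^{\Sigma}(X_1,e_i,e_i),X_2,\ldots,X_k)$, while the remaining slots produce the sum $\sum_{j=2}^{k}\psi(e_i,\ldots,\operatorname{Rm}^{\Sigma}(X_1,e_i,X_j),\ldots,X_k)$.

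Finally, the leftover non-curvature terms are
\begin{equation*}
-\sum_{i}(\nabla_{e_i}\nabla_{X_1}\psi)(e_i,X_2,\ldots,X_k)
+\sum_{i}(\nabla_{e_i}\nabla_{e_i}\psi)(X_1,\ldots,X_k),
\end{equation*}
and I would show these assemble into $-\nabla^{*}T_\psi(X_1,\ldots,X_k)$. Indeed, unpacking $T_\psi$ at $p$ with the parallel frame gives
\begin{equation*}
T_\psi(e_i,X_1,\ldots,X_k)=(\nabla_{e_i}\psi)(X_1,\ldots,X_k)-(\nabla_{X_1}\psi)(e_i,X_2,\ldots,X_k),
\end{equation*}
so $\nabla^{*}T_\psi(X_1,\ldots,X_k)=-\sum_{i}\nabla_{e_i}T_\psi(e_i,X_1,\ldots,X_k)$ produces precisely the negative of the above two terms. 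Combining the three contributions yields the stated identity. The only real subtlety is the careful bookkeeping of signs and of which slot of $\psi$ the curvature acts on; choosing the parallel frame eliminates all spurious first-order terms and makes this purely algebraic.
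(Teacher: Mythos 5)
Your computation is correct: in a frame parallel at $p$ the two second-derivative expressions, the Ricci identity applied slot by slot, and the divergence of $T_\psi$ combine with the right signs to give exactly the stated formula. The paper offers no proof of this lemma (it is dismissed as a "straight-forward computation"), and your argument is precisely that intended computation.
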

We would now like to use the previous lemma to express certain geometric quantities in terms of the Willmore operator $W$ defined by $W=\Delta H+H\operatorname{Rc}(\nu,\nu)+H|\Acirc|^2$. To this end, we first need the following lemma.
\begin{lem}
	The following identities hold:
	\begin{align}
	\Delta \Acirc=&(\nabla ^2 H-\frac{1}{2}\gamma\Delta H)+\frac12 H^2\Acirc+\Acirc*\Acirc*\Acirc+\operatorname{Rm}*A+\overline{\nabla}{\operatorname{Rm}}*1,
	\label{Simon}
	\\
	\nabla^*\nabla^2 H
	=&-\nabla(\Delta H)+{\operatorname{Rm}}*\nabla H+\Acirc*\Acirc*\nabla \Acirc+{\operatorname{Rm}}*\Acirc*\Acirc -\frac14 H^2\nabla H,
	\label{identity2}
	\\
	\nabla^*\nabla^2 H=& -\nabla(\Delta H)+{\operatorname{Rm}}*\nabla H+\Acirc*\Acirc*\nabla H-\frac14H^2\nabla H,
	\label{identity2b}
	\\
	\nabla^*(\nabla^2\Acirc)=&\nabla(\nabla^*\nabla \Acirc)+\nabla \Acirc*A*A+\overline{\nabla}{\operatorname{Rm}}* \Acirc
	+{\operatorname{Rm}}*\Acirc*A+\operatorname{Rm}*\nabla \Acirc.
	\label{identity 3}
	\end{align}	
\end{lem}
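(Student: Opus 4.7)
The plan is to derive all four identities by combining the structural equations of Gauss and Codazzi--Mainardi with the commutation identity supplied by Lemma \ref{der swap}. A crucial simplification throughout is that $\Sigma$ is two-dimensional, so $\operatorname{Rm}^\Sigma$ is completely determined by its Gaussian curvature via Gauss:
$$K^\Sigma=\tfrac14 H^2-\tfrac12 |\Acirc|^2+\operatorname{sec}^{\mathrm{amb}},$$
and hence, schematically, $\operatorname{Rm}^\Sigma=H^2*1+\Acirc*\Acirc+\operatorname{Rm}$ up to universal constants.

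For the Simons-type identity (\ref{Simon}) I would begin from the Codazzi--Mainardi equation
$$(\nabla_X A)(Y,Z)-(\nabla_Y A)(X,Z)=-\operatorname{Rm}(X,Y,Z,\nu),$$
and compute $\Delta A$ by taking a further tangential derivative and tracing. Commuting the two covariant derivatives picks up a term of the form $\operatorname{Rm}^\Sigma *A$, the trace produces $\nabla^2 H$ on the right-hand side, and differentiating the ambient curvature correction yields the $\overline{\nabla}\operatorname{Rm}*1$ contribution. Passing to the trace-free part subtracts $\tfrac12\gamma\Delta H$; combined with the two-dimensional algebraic identities $|A|^2=|\Acirc|^2+H^2/2$ and $A=\Acirc+\tfrac12 H\gamma$, the terms $\operatorname{Rm}^\Sigma*A$ and the cubic terms in $A$ regroup into exactly $\tfrac12 H^2\Acirc+\Acirc*\Acirc*\Acirc+\operatorname{Rm}*A$. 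The remaining three identities are all consequences of Lemma \ref{der swap}. For (\ref{identity2b}) I apply the lemma to $\psi:=\nabla H\in\Omega^1(\Sigma)$; since the Hessian of a function is symmetric, the torsion-type term $T_\psi$ vanishes and the lemma reduces to
$$\nabla^*\nabla^2 H=-\nabla(\Delta H)-\nabla H*\operatorname{Rm}^\Sigma,$$
into which the schematic Gauss expression is substituted. To pass from (\ref{identity2b}) to (\ref{identity2}) I use Codazzi--Mainardi once more to trade $\Acirc*\Acirc*\nabla H$ for $\Acirc*\Acirc*\nabla \Acirc$ at the cost of an ambient curvature error of schematic type $\operatorname{Rm}*\Acirc*\Acirc$. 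Finally, for (\ref{identity 3}) I apply Lemma \ref{der swap} to $\psi:=\nabla \Acirc$, viewed as a $(0,3)$-tensor; the commutator produces $\operatorname{Rm}^\Sigma*\nabla \Acirc$ on the outer slot and $\operatorname{Rm}^\Sigma*\Acirc$ on the inner ones, while $\nabla^*T_{\nabla \Acirc}$ is expanded by Codazzi into $\overline{\nabla}\operatorname{Rm}*\Acirc+\operatorname{Rm}*\nabla \Acirc+\operatorname{Rm}*\Acirc*A$, and invoking Gauss once more absorbs all $\operatorname{Rm}^\Sigma$ factors into the schematic classes displayed on the right-hand side.

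The computations are structurally short but combinatorially dense: the genuine obstacle is keeping a clean accounting of the ambient curvature corrections that arise both from the inhomogeneous Codazzi--Mainardi equation and from each application of Gauss, and, for (\ref{Simon}), to check that the cubic terms collapse exactly to $\tfrac12 H^2\Acirc+\Acirc*\Acirc*\Acirc$ without a residual $H^2\Acirc$ contribution of the wrong coefficient. This is precisely where the tracefree decomposition $A=\Acirc+\tfrac12 H\gamma$ and the two-dimensional identity $2|A|^2=H^2+2|\Acirc|^2$ enter in an essential way.
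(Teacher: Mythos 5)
Your proposal follows essentially the same route as the paper: apply the commutation identity of Lemma \ref{der swap} with $\psi=A$, $\psi=\nabla H$ and $\psi=\nabla\Acirc$ respectively, identify the torsion terms $T_\psi$ via Codazzi--Mainardi (respectively symmetry of the Hessian and the Ricci identity), and then use the Gauss equation together with the decomposition $A=\Acirc+\tfrac12 H\gamma$ to obtain the precise coefficients $\tfrac12 H^2\Acirc$ and $-\tfrac14H^2\nabla H$ — exactly the computation the paper carries out in principal directions. The strategy and the identified delicate points are correct.
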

\begin{proof}
	With the convention
	$$
	A_{ij}=A(X_i,X_j)=g(\overline{\nabla}_{X_i}X_j,\nu)
	$$
	the orthonormal frame satisfies
	\begin{align}
	\overline{\nabla}_{X_i}X_j=\nabla_{X_i}X_j+A_{ij}\nu=A_{ij}\nu. \label{ambient vs intrinsic derivative}
	\end{align}
	Chosing $\psi=A$ in Lemma \ref{der swap} we obtain
	$$
	T_A(X_1,X_2,X_3)=X_1(g(\bar{\nabla}_{X_2}X_3,\nu))-X_2(g(\bar{\nabla}_{X_1}X_3,\nu))={\operatorname{Rm}}(X_1,X_2,X_3,\nu)
	$$
	as $\overline{\nabla}\nu$ is tangential. We clearly have $\nabla^*T(X_1,X_2)=X_i({\operatorname{Rm}}(X_i,X_1,X_2,\nu))$ which implies
	$$
	\nabla^*T_A=\overline{\nabla}{\operatorname{Rm}}*1+{\operatorname{Rm}}*A. 
	$$
	A straight forward calculation gives
	$$
	\nabla^*A=-\nabla H +{\operatorname{Rm}}*1.
	$$
	Evaluated at $(X_1,X_2)$, this yields (with slight abuse of notation)
	\begin{align}
	\Delta A=\nabla^2 H+A(\operatorname{Rm}^\Sigma(X_1,e_i,e_i),X_2)+A(e_i,\operatorname{Rm}^\Sigma(X_1,e_i,X_2))+\overline{\nabla}{\operatorname{Rm}}*1+{\operatorname{Rm}}*A.
	\label{bt eq 1}
	\end{align}
	If we furthermore assume that the $e_i$ are principal directions, we obtain
	\begin{align*}
	&A(\operatorname{Rm}^\Sigma(X_1,e_i,e_i),X_2)+A(e_i,\operatorname{Rm}^\Sigma(X_1,e_i,X_2))
	\\=&	\operatorname{Rm}^\Sigma(X_1,e_i,e_i,X_2)A(X_2,X_2)+\operatorname{Rm}^\Sigma(X_1,e_i,X_2,e_i)
	A(e_i,e_i).
	\end{align*}
	If $X_1, X_2$ are distinct, both terms vanish. Otherwise we can assume that $X_1=X_2=e_1$. Then, using the Gauss equation
	$$
	\operatorname{Rm}^\Sigma(a,b,c,d)={\operatorname{Rm}}(a,b,c,d)+A(a,d)A(b,c)-A(a,c)A(b,d)
	$$
	the right hand side becomes (again with abuse of notation)
	\begin{align*}
	&\operatorname{Rm}^\Sigma(e_1,e_2,e_2,e_1)(A(e_1,e_1)-A(e_2,e_2))
	\\=&{\operatorname{Rm}}*A+A_{11}A_{22}(A_{11}-A_{22}).
	\\=&{\operatorname{Rm}}*A+(\Acirc_{11}+\frac12H)(\Acirc_{22}+\frac12H)(\Acirc_{11}-\Acirc_{22})
	\\=&{\operatorname{Rm}}*A-2(\Acirc_{11}+\frac12 H)(\Acirc_{11}-\frac12 H)\Acirc_{11},
	\end{align*}
	where we used that $\Acirc$ is trace free. Together with (\ref{bt eq 1}) this clearly implies
	\begin{align*}
	\Delta A=\nabla ^2 H+\frac12 H^2\Acirc+\Acirc*\Acirc*\Acirc+{\operatorname{Rm}}*A+\overline{\nabla}{\operatorname{Rm}}*1.
	\end{align*}
	As $\nabla \gamma=0$, there holds $\Delta(\gamma H)=\gamma\Delta H$ and we obtain the first claim. Choosing $\psi=\nabla H$ we find $T_{\nabla H}=0$ by the symmetry of second derivatives and evaluating at $X_1=e_1$ we obtain (again, with abuse of notation)
	\begin{align*}
	\nabla^*\nabla^2 H=&\nabla \nabla^* \nabla H-\operatorname{Rm}^\Sigma(X_1,e_i,e_i,e_j)\nabla H(e_j)
	\\=&\nabla \nabla^* \nabla H- \operatorname{Rm}^\Sigma(e_1,e_2,e_2,e_1)e_1(H)
	\\=& -\nabla(\Delta H)+{\operatorname{Rm}}*\nabla H+(\Acirc_{11}^2-\frac14 H^2)e_1(H)
	\\=&-\nabla(\Delta H)+{\operatorname{Rm}}*\nabla H+\Acirc*\Acirc*\nabla^*\Acirc+{\operatorname{Rm}}*\Acirc*\Acirc -\frac14 H^2\nabla H,
	\end{align*}
	as $\nabla^* \Acirc=-\frac12 \nabla H +{\operatorname{Rm}}*1$. This implies the second and third claim. Finally, if $\psi=\nabla \Acirc$, then at $(X_1,X_2,X_3,X_4)$ we have
	$$
	T_{\nabla \Acirc}=\Acirc(\operatorname{Rm}^\Sigma (X_1,X_2,X_3),X_4)+\Acirc(X_3,\operatorname{Rm}^\Sigma(X_1,X_2,X_4))
	={\operatorname{Rm}}*\Acirc+\Acirc*A*A,
	$$
	hence
	$$
	\nabla^*T_{\nabla \Acirc}=\overline{\nabla}{\operatorname{Rm}}* \Acirc
	+{\operatorname{Rm}}*\Acirc*A+\operatorname{Rm}*\nabla \Acirc+\nabla \Acirc * A * A.
	$$
	This gives the very rough identity
	\begin{align*}
	\nabla^*(\nabla^2\Acirc)=\nabla(\nabla^*\nabla \Acirc)+\nabla \Acirc*A*A+\overline{\nabla}{\operatorname{Rm}}* \Acirc
	+{\operatorname{Rm}}*\Acirc*A+\operatorname{Rm}*\nabla \Acirc.
	\end{align*}
\end{proof}
\begin{rema}
	In fact, one can actually show the more precise identity
	\begin{align}
	\Acirc \cdot \nabla^2 \Acirc = \Acirc \cdot \nabla ^2 H+\frac12 H^2 |\Acirc|^2-|\Acirc|^4+\Acirc*\Acirc*\operatorname{Rm}+2\Acirc\cdot \nabla \operatorname{Rc}(\nu,\cdot)^T,
	\label{simonsidentity}
	\end{align}
	see (1.7) in \cite{lamm2011foliations}. This can be seen by using the identity $\nabla^* \Acirc=-\frac12 \nabla H +\operatorname{Rc}^T(\cdot,\nu)$.
\end{rema}
For the following proof we remark that $|\operatorname{Rc}|\leq c\eta R_e^{-4}+cmR_e^{-3}\leq cR_e^{-3}$ and $|\overline{\nabla}\operatorname{Rc}|\leq c\eta R_e^{-5}+cmR_e^{-4}\leq cR_e^{-4}$. Moreover, since $|H_e|^2_{L^2(\Sigma)}\geq 16\pi$, there evidently holds $|A|^2_{L^2(\Sigma)}\geq 8\pi$, provided $r_{\min}\geq R_0$ (c.f. Lemma \ref{identities}). 
\begin{lem}
	\label{gradestimate}
	Let $\Sigma$ be a spherical surface satisfying $r_{\min}\geq R_0$, the conclusion of Lemma \ref{r compare} and (\ref{l2 curvature bound}).	Then, there holds
	$$
	\int_{\Sigma} |\nabla A|^2 d\mu \leq -c\int_{\Sigma} HWd\mu+\frac{c}{R_e^3}\int_{\Sigma} |A|^2d\mu+c\int_\Sigma |\Acirc|^4 d\mu,
	$$
	provided $R_0$ is chosen to be sufficiently large. 	In this estimate, we may replace $\nabla A$ by $\nabla \Acirc$ or $\nabla H$.
	\label{section 4 lemma with assumptions}	
	\begin{proof} 
		Multiplying (\ref{Simon}) by $\Acirc$, integrating and using that $\Acirc$ is trace free we obtain
		\begin{align*}
		\int_{\Sigma} |\nabla \Acirc|^2 d\mu +\frac12\int_{\Sigma} H^2 |\Acirc|^2d\mu &\leq - \int_{\Sigma} \nabla H\cdot \nabla^* \Acirc d\mu +c\int_{\Sigma} |\Acirc|^4 d\mu +\frac{c}{R_e^3} \int_{\Sigma} |A||\Acirc| d\mu +\frac{c}{R_e^4}\int_{\Sigma} |\Acirc|d\mu \\&\leq 
		\frac12 \int_{\Sigma} |\nabla H|^2d\mu +\frac{c}{R_e^3}\int_{\Sigma} |\nabla H|d\mu+c\int_{\Sigma} |\Acirc|^4d\mu+\frac{c}{R_e^3} \int_{\Sigma} |A|^2d\mu,
		\end{align*}
		where we used $\nabla H=-\nabla^* \Acirc+\operatorname{Rm}*1$, $|\Acirc||A|\leq R_e^{-1}|A|^2+R_e|\Acirc|^2$ and estimated
		$$
		\int_\Sigma |\Acirc|d\mu\leq c\int_{\Sigma} R_e|\Acirc|^2d\mu+cR_e^{-1}\int_\Sigma 1d\mu\leq  c R_e^3 \int_\Sigma |\Acirc|^4+cR_e  \leq c R_e^3 \int_\Sigma |\Acirc|^4d\mu+ c R_e \int_{\Sigma} |A|^2d\mu.
		$$
		Integrating by parts and using the definition of $W$ we obtain
		\begin{align*}
		\frac 12\int_{\Sigma} |\nabla H|^2d\mu =& \frac12 \int_{\Sigma} ( -HW+H^2{\operatorname{Rc}}(\nu,\nu)+H^2|\Acirc|^2)d\mu \\\leq& 
		\frac12 \int_{\Sigma} ( -HW+H^2|\Acirc|^2)d\mu+cR_e^{-3}\int_{\Sigma} |A|^2 d\mu.
		\end{align*}
		Next, $|\nabla H|\leq c|\nabla \Acirc|+cR_e^{-3}$ and hence 
		$$
		\int_{\Sigma} |\nabla H|R_e^{-3}d\mu\leq c R_e^{-1} \int_{\Sigma} |\nabla \Acirc|^2d\mu +cR_e^{-3}\int_{\Sigma} |A|^2d\mu.
		$$
		The claimed inequality now follows from combining these estimates,  absorbing the $|\nabla \Acirc|^2$ term and using $|\nabla A|^2\leq c |\nabla \Acirc|^2+cR_e^{-6}$. Finally, we clearly have $|\nabla H|\leq c |\nabla A|$.	
	\end{proof}
\end{lem}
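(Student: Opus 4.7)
The plan is to derive the inequality by testing Simons' identity (\ref{Simon}) against $\Acirc$ in $L^2$ and then rewriting the resulting $\int_\Sigma |\nabla H|^2 d\mu$ via the definition of the Willmore operator $W = \Delta H + H\operatorname{Rc}(\nu,\nu) + H|\Acirc|^2$. The ambient geometry enters only through the pointwise bounds $|\operatorname{Rm}|\leq cR_e^{-3}$ and $|\overline\nabla\operatorname{Rm}|\leq cR_e^{-4}$, which will be responsible for the $R_e^{-3}\int |A|^2$ error in the statement.

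First, I would pair (\ref{Simon}) with $\Acirc$ and integrate over $\Sigma$. Since $\Acirc$ is trace-free, the term $\tfrac12\gamma\Delta H$ makes no contribution, and integrating $\int_\Sigma \Acirc\cdot\nabla^2 H\, d\mu$ by parts yields $-\int_\Sigma \nabla H\cdot\nabla^*\Acirc\, d\mu$. Substituting the Codazzi-type identity $\nabla^*\Acirc = -\tfrac12\nabla H + \operatorname{Rm}*1$ and using Cauchy--Schwarz together with the elementary bound $|\Acirc||A|\leq R_e^{-1}|A|^2 + R_e|\Acirc|^2$ to control the curvature remainders produces
\[
\int_\Sigma |\nabla \Acirc|^2 d\mu + \frac{1}{2}\int_\Sigma H^2|\Acirc|^2 d\mu \leq \frac{1}{2}\int_\Sigma |\nabla H|^2 d\mu + c\int_\Sigma |\Acirc|^4 d\mu + \frac{c}{R_e^3}\int_\Sigma |A|^2 d\mu.
\]
Next, I would rewrite $\int_\Sigma |\nabla H|^2 d\mu = -\int_\Sigma H\Delta H\, d\mu$ and substitute $\Delta H = W - H\operatorname{Rc}(\nu,\nu) - H|\Acirc|^2$ from the definition of $W$. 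The resulting $H^2|\Acirc|^2$ term cancels with the corresponding one on the left, the Ricci contribution is bounded by $cR_e^{-3}\int_\Sigma |A|^2 d\mu$ since $|\operatorname{Rc}|\leq cR_e^{-3}$, and the main term $-\int_\Sigma HW\, d\mu$ is exactly the leading term in the claim.

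The main technical obstacle is the bookkeeping of the several lower-order error integrals produced by $\operatorname{Rm}*A$ and $\overline\nabla\operatorname{Rm}*1$ in Simons' identity and $\operatorname{Rm}*1$ in Codazzi. These give integrals of the form $\int_\Sigma |\Acirc|R_e^{-4} d\mu$ or $\int_\Sigma |\nabla H|R_e^{-3} d\mu$; via Hölder, the area bound $|\Sigma|\leq cR_e^2$, and Young's inequality they split into contributions of the shape already in the statement plus a small multiple of $\int_\Sigma |\nabla \Acirc|^2 d\mu$, which is absorbed into the left-hand side once $R_0$ is chosen large enough. Finally, passing from $\nabla\Acirc$ to $\nabla A$ or $\nabla H$ costs only an additional $cR_e^{-6}$ factor (via $\nabla A = \nabla\Acirc + \tfrac12\gamma\otimes\nabla H$ together with Codazzi), which is dominated by $cR_e^{-3}\int_\Sigma |A|^2 d\mu$ thanks to the uniform lower bound $\int_\Sigma |A|^2 d\mu\geq 8\pi$ coming from the Gauss equation (\ref{gausseqn}) and $|H|^2_{L^2}\geq 16\pi$.
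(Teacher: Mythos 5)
Your proposal is correct and follows essentially the same route as the paper's proof: test Simons' identity against $\Acirc$, use the Codazzi identity $\nabla^*\Acirc=-\tfrac12\nabla H+\operatorname{Rm}*1$, rewrite $\int_\Sigma|\nabla H|^2d\mu$ via integration by parts and the definition of $W$ so that the $H^2|\Acirc|^2$ terms cancel, absorb the small multiple of $\int_\Sigma|\nabla\Acirc|^2d\mu$ for $R_0$ large, and pass to $\nabla A$ and $\nabla H$ via $|\nabla A|^2\leq c|\nabla\Acirc|^2+cR_e^{-6}$ together with $\int_\Sigma|A|^2d\mu\geq 8\pi$. No gaps.
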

\begin{lem}
	Under the assumptions of the previous lemma, there holds
	\begin{align*}
	\int_{\Sigma} |\nabla ^2H|^2d\mu+\int_{\Sigma} |A|^2|\nabla A|^2d\mu+\int_{\Sigma} |A|^4|\Acirc|^2d\mu \\ \leq c\int_{\Sigma} |W|^2 d\mu -cR_e^{-3} \int_{\Sigma} WHd\mu+c\int_{\Sigma} (|\Acirc|^2|\nabla \Acirc|^2+|\Acirc|^6) d\mu +cR_e^{-6} \int_{\Sigma} |A|^2 d\mu.
	\end{align*}
	
\end{lem}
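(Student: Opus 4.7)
The plan is to combine the Bochner-type identity (\ref{identity2b}) for $H$ with the Simons-type identity (\ref{Simon}) for $\Acirc$, using the defining equation $\Delta H = W - H\operatorname{Rc}(\nu,\nu) - H|\Acirc|^2$ to substitute for $\Delta H$ wherever it occurs, and then carefully absorbing cross terms.

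First I would bound $\int|\nabla^2 H|^2\,d\mu$ by multiplying identity (\ref{identity2b}) by $\nabla H$ and integrating. After one integration by parts the left-hand side becomes $\int|\nabla^2 H|^2\,d\mu$, and the term $-\nabla(\Delta H)\cdot \nabla H$ integrates (by parts once more) to $\int(\Delta H)^2\,d\mu$. The remaining correction terms on the right read $\int \operatorname{Rm}\ast\nabla H\cdot\nabla H$, $\int \Acirc\ast\Acirc\ast\nabla H\cdot\nabla H$ and $-\tfrac14\int H^2|\nabla H|^2$. Expanding $(\Delta H)^2$ via the Willmore operator produces the principal term $\int W^2$, the cross term $-2\int WH\operatorname{Rc}(\nu,\nu)-2\int WH|\Acirc|^2$, and $\int H^2[\operatorname{Rc}(\nu,\nu)+|\Acirc|^2]^2$. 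Combining the ambient curvature remainder $\int\operatorname{Rm}\ast|\nabla H|^2$ with the Bochner identity $\int|\nabla H|^2 = -\int WH + \int H^2\operatorname{Rc}(\nu,\nu) + \int H^2|\Acirc|^2$ (applied with the weight $cR_e^{-3}$ carried by $|\operatorname{Rm}|$) extracts the signed contribution $-cR_e^{-3}\int WH$ required by the statement, while the companion pieces are absorbed into $cR_e^{-6}\int|A|^2$ or into the LHS-quantity $\int|A|^4|\Acirc|^2$. The residual $\int H^2|\Acirc|^4$ from the expansion of $(\Delta H)^2$ likewise feeds into the $|A|^4|\Acirc|^2$ bound.

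Second, to control $\int|A|^2|\nabla A|^2$ and $\int|A|^4|\Acirc|^2$ jointly, I would test Simons' identity (\ref{Simon}) against $|A|^2\Acirc$. Trace-freeness of $\Acirc$ kills the $\tfrac12\gamma\Delta H$ contribution. Two integrations by parts applied to $\int(\nabla^2 H)\cdot|A|^2\Acirc$, combined with the Codazzi relation $\nabla^i\Acirc_{ij}=\tfrac12\nabla_jH+\operatorname{Rc}(e_j,\nu)$, reduce this term to $-\tfrac12\int|A|^2|\nabla H|^2$ plus mixed terms $\int\nabla|A|^2\cdot\Acirc\cdot\nabla H$ that are absorbed into $\int|A|^2|\nabla\Acirc|^2$ via Cauchy--Schwarz. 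The Simons main term $\tfrac12\int H^2|A|^2|\Acirc|^2$ together with the algebraic identity $|A|^4|\Acirc|^2\leq|\Acirc|^6+H^2|A|^2|\Acirc|^2$ recovers the desired $\int|A|^4|\Acirc|^2$ up to the admissible $\int|\Acirc|^6$. The cubic contribution $\int\Acirc\ast\Acirc\ast\Acirc\cdot|A|^2\Acirc$ is estimated by $\int|A|^2|\Acirc|^4\leq\int|\Acirc|^6+\tfrac12\int H^2|\Acirc|^4$, the second piece being reabsorbed into $\int H^2|A|^2|\Acirc|^2$; the ambient curvature contributions reduce to $R_e^{-3}$-weighted expressions controlled by $cR_e^{-6}\int|A|^2 + c\int|\Acirc|^2|\nabla\Acirc|^2$ after Young's inequality.

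Finally, decomposing $|\nabla A|^2\leq c(|\nabla\Acirc|^2+|\nabla H|^2)$, I would combine the outputs from the two tests with Lemma \ref{section 4 lemma with assumptions} (which controls $\int|\nabla A|^2$ at the weight $R_e^{-3}$) and absorb all cross terms into the LHS. The main technical difficulty is the bookkeeping entailed by $|A|^2 = |\Acirc|^2 + \tfrac12 H^2$: every weighted quantity on the LHS fragments into an $\Acirc$-piece — which matches an ingredient on the RHS such as $\int|\Acirc|^2|\nabla\Acirc|^2$ or $\int|\Acirc|^6$ — and an $H$-piece that must either be routed through $\int W^2$ via the Willmore-operator identity or reabsorbed into the LHS with a small Cauchy--Schwarz constant. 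This is legitimate because $\int|\Acirc|^2$ and $R_e^{-1}$ are small under the standing assumptions, and all constants depend only on $m$ and $\eta$.
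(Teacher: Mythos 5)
Your proposal is correct and takes essentially the same route as the paper: the paper likewise tests the commutator identity for $\nabla^2H$ against $\nabla H$, expands $(\Delta H)^2$ through the Willmore operator, and then tests Simon's identity (\ref{Simon}) against a curvature-weighted $\Acirc$ to produce the $\int|A|^2|\nabla A|^2$ and $\int|A|^4|\Acirc|^2$ terms, absorbing cross terms with Young's inequality. The only cosmetic differences are that the paper uses the weight $H^2\Acirc$ rather than $|A|^2\Acirc$ (equivalent after splitting $|A|^2=|\Acirc|^2+\tfrac12H^2$) and extracts the $-cR_e^{-3}\int_\Sigma WH\,d\mu$ contribution by citing Lemma \ref{gradestimate} instead of integrating $\int_\Sigma|\nabla H|^2\,d\mu$ by parts directly.
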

\begin{proof}
	Multiplying (\ref{identity2}) with $\nabla H$ and integrating we obtain
	
	\begin{align*}
	\int_{\Sigma} |\nabla^2H|^2d\mu+\frac 14\int_{\Sigma} H^2|\nabla H|^2 d\mu \leq&  \int_{\Sigma} |\Delta H|^2d\mu + cR_e^{-3}\int_{\Sigma} |\nabla H|^2d\mu +c\int_{\Sigma} |\Acirc|^2|\nabla \Acirc||\nabla H|d\mu\\&+cR_e^{-3}\int_{\Sigma} |\Acirc|^2|\nabla H|d\mu.
	\end{align*}
	The last term can be estimated by
	$$
	c(\kappa)R_e^{-6} \int_{\Sigma} |A|^2 d\mu +\kappa/2 \int_{\Sigma} |\nabla A|^2|A|^2|d\mu.
	$$
	Next, we have
	$$
	\int_{\Sigma} |\Delta H|^2 d\mu \leq c \int_{\Sigma} |W|^2 d\mu+cR_e^{-6} \int_{\Sigma} |A|^2 d\mu + c \int_{\Sigma} |\Acirc|^4|A|^2 d\mu 
	$$
	and 
	$\int_{\Sigma} |\Acirc|^4|A|^2d\mu \leq c(\kappa) \int_{\Sigma} |\Acirc|^6d\mu+\kappa \int_{\Sigma} |A|^4|\Acirc|^2d\mu$. There also holds
	$$
	\int_{\Sigma} |\Acirc|^2|\nabla \Acirc||\nabla H|d\mu \leq \kappa/2 \int_{\Sigma} |A|^2|\nabla A|^2 d\mu +c(\kappa)\int_{\Sigma} |\nabla \Acirc|^2|\Acirc|^2d\mu
	$$
	and the term $R_e^{-3}\int_\Sigma |\nabla H|^2 d\mu$ can be estimated with the previous lemma and the trivial estimate $\int_{\Sigma} |\Acirc|^4d\mu\leq 2 R_e^3 \int_{\Sigma}   |\Acirc|^6d\mu+2R_e^{-3}\int_{\Sigma} |A|^2d\mu$. Hence, so far we have shown that
	\begin{align}
	\int_{\Sigma} |\nabla^2H|^2d\mu+\frac 14\int_{\Sigma} H^2|\nabla H|^2 d\mu
	\leq 
	c \int_{\Sigma} |W|^2 d\mu+c(\kappa )R_e^{-6} \int_{\Sigma} |A|^2 d\mu + \kappa \int_{\Sigma} |\Acirc|^2|A|^4 d\mu  \notag \\
	+\kappa  \int_{\Sigma} |A|^2|\nabla A|^2 d\mu +c(\kappa)\int_{\Sigma} |\nabla \Acirc|^2|\Acirc|^2d\mu 
	-cR_e^{-3}\int_{\Sigma} WHd\mu+c(\kappa)\int_\Sigma |\Acirc|^6 d\mu \notag.
	\end{align}
	From the estimate $|\nabla A|^2\leq c |\nabla \Acirc|^2+cR_e^{-6}$ and (\ref{Simon}) it follows that 
	\begin{align*}
	\frac1c	\int_{\Sigma} H^2 |\nabla A|^2d\mu-cR_e^{-6}\int_{\Sigma}|A|^2d\mu\leq& \int H^2|\nabla \Acirc|^2d\mu
	\\ =&-\int_{\Sigma} H^2 \Acirc\cdot \nabla^2Hd\mu -\frac 12 \int H^4|\Acirc|^2 d\mu
	\\&+\int_{\Sigma} (H^2*\Acirc*\Acirc*\Acirc*\Acirc+H^2*\Acirc*({\operatorname{Rm}}*A+\overline{\nabla}{\operatorname{Rm}})d\mu\\&+\int_{\Sigma} H*\nabla H*\Acirc*\nabla \Acirc)d\mu.
	\end{align*}
	The second and third row can be estimated by
	$$
	\kappa \int_{\Sigma} |A|^4 |\Acirc|^2d\mu +c(\kappa) \int_{\Sigma} |\Acirc|^6 d\mu +c(\kappa)R_e^{-6}\int_{\Sigma} |A|^2d\mu +\kappa \int_{\Sigma} |A|^2|\nabla A|^2d\mu +\int_{\Sigma} c(\kappa)|\Acirc|^2|\nabla \Acirc|^2d\mu.
	$$
	Using partial integration and the identity $-\nabla^*\Acirc=\frac12 \nabla H+\operatorname{Rm}*1$, the first term in the first row can be computed to be
	$$
	-\int_{\Sigma} H^2\Acirc\nabla^2Hd\mu = \frac 12 \int_{\Sigma} H^2|\nabla H|^2 d\mu + \int_{\Sigma} H^2*\nabla H *{\operatorname{Rm}}d\mu + \int_{\Sigma} \nabla H*\nabla H * \Acirc * Hd\mu.
	$$
	In the last equation, the third term can be estimated as before and the second term can be estimated by
	$$
	\frac{c(\kappa)}{R_e^6} \int_{\Sigma} |A|^2d\mu+\kappa\int_{\Sigma}|A|^2|\nabla A|^2d\mu.
	$$
	Finally, we note that
	$$
	\int_{\Sigma} A^2|\nabla A|^2d\mu \leq c\int_{\Sigma} H^2|\nabla A|^2d\mu+c\int_{\Sigma} |\Acirc|^2|\nabla \Acirc|^2d\mu+\frac{c}{R_e^6}\int_{\Sigma} |A|^2d\mu.
	$$
	Combining all these inequalities, choosing $\kappa>0$ sufficiently small, absorbing all terms when possible and noting that
	$$
	\int_{\Sigma} |A|^4|\Acirc|^2d\mu \leq \int_{\Sigma} H^4|\Acirc|^2d\mu+\int_{\Sigma}|\Acirc|^6d\mu
	$$ we obtain the desired statement.
\end{proof}
\begin{lem}
	\label{absorbing}
	Under the assumptions of Lemma \ref{section 4 lemma with assumptions} there holds
	\begin{align*}
	&\int_{\Sigma} |\nabla ^2 \Acirc|^2d\mu +\int_{\Sigma} |A|^4|\Acirc|^2d\mu+\int_{\Sigma} |\nabla A|^2|A|^2d\mu \\\leq& c \int_{\Sigma} |W|^2d\mu +c\int_{\Sigma} (|\nabla \Acirc|^2|\Acirc|^2+|\Acirc|^6)d\mu +cR_e^{-6}\int_{\Sigma} |A|^2d\mu,
	\end{align*}
	provided $R_0$ is sufficiently large.
\end{lem}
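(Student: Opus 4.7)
The approach is to establish the missing bound on $\int_\Sigma |\nabla^2 \Acirc|^2 d\mu$ separately, and to inherit the bounds on $\int_\Sigma |A|^4|\Acirc|^2 d\mu$ and $\int_\Sigma |\nabla A|^2 |A|^2 d\mu$ directly from the previous lemma; the three estimates are then added. The key reduction proceeds in two stages: first from $\int |\nabla^2 \Acirc|^2$ to $\int |\nabla^*\nabla \Acirc|^2$ via the commutator identity (\ref{identity 3}), and then from $\int |\nabla^*\nabla\Acirc|^2$ to $\int |\nabla^2 H|^2$ via Simon's identity (\ref{Simon}).

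For the first stage, I would multiply (\ref{identity 3}) by $\nabla \Acirc$ and integrate over $\Sigma$. One integration by parts on the left turns $\int \nabla \Acirc \cdot \nabla^*(\nabla^2\Acirc)\, d\mu$ into $\int |\nabla^2 \Acirc|^2 d\mu$, while the same procedure on the leading right-hand term gives $\int \nabla\Acirc\cdot \nabla(\nabla^*\nabla \Acirc)\, d\mu = \int |\nabla^*\nabla \Acirc|^2 d\mu$, hence
\begin{align*}
\int_\Sigma |\nabla^2 \Acirc|^2 d\mu = \int_\Sigma |\nabla^*\nabla\Acirc|^2 d\mu + E,
\end{align*}
where $E$ is the pairing of $\nabla\Acirc$ against the remaining four terms on the right-hand side of (\ref{identity 3}).

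For the second stage, Simon's identity (\ref{Simon}) yields $\nabla^*\nabla \Acirc = -\nabla^2 H + \tfrac12 \gamma \Delta H - \tfrac12 H^2 \Acirc + \Acirc*\Acirc*\Acirc + \operatorname{Rm}*A + \overline{\nabla}\operatorname{Rm}*1$. Squaring, integrating, and using $|\operatorname{Rm}|\leq cR_e^{-3}$, $|\overline{\nabla}\operatorname{Rm}|\leq cR_e^{-4}$ together with $|\Sigma|\leq cR_e^2$ gives
\begin{align*}
\int_\Sigma |\nabla^*\nabla \Acirc|^2 d\mu \leq c\int_\Sigma |\nabla^2 H|^2 d\mu + c\int_\Sigma H^4|\Acirc|^2 d\mu + c\int_\Sigma |\Acirc|^6 d\mu + cR_e^{-6}\int_\Sigma |A|^2 d\mu.
\end{align*}
Since $H^4|\Acirc|^2 \leq 4|A|^4|\Acirc|^2$, both $\int |\nabla^2 H|^2$ and $\int H^4|\Acirc|^2$ are controlled by the previous lemma. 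The unwanted cross term $-cR_e^{-3}\int WH\, d\mu$ produced there is eliminated by Young's inequality, $|R_e^{-3}\int WH\, d\mu|\leq c\int W^2 d\mu + cR_e^{-6}\int H^2 d\mu$, after which $\int H^2\leq 2\int |A|^2$ places it in one of the allowed buckets.

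The main obstacle is the bookkeeping in the error $E$. Expanding the $\nabla\Acirc\cdot(\nabla\Acirc*A*A)$ piece via $|A|^2=\tfrac12 H^2+|\Acirc|^2$ produces an $|\Acirc|^2|\nabla\Acirc|^2$ contribution that sits in the right-hand side of the claim, and an $H^2|\nabla\Acirc|^2$ contribution bounded by $c\int|A|^2|\nabla A|^2$, again controlled by the previous lemma. The remaining pieces $\nabla\Acirc\cdot\overline{\nabla}\operatorname{Rm}*\Acirc$, $\nabla\Acirc\cdot \operatorname{Rm}*\Acirc*A$ and $\nabla\Acirc\cdot \operatorname{Rm}*\nabla\Acirc$ are estimated by Young's inequality and the pointwise curvature bounds, producing factors of $R_e^{-k}$ with $k\geq 3$ in front of $\int|\nabla \Acirc|^2$, $\int|\Acirc|^2$ or $\int|A|^2$; residual $\int|\nabla\Acirc|^2$ factors are handled via Lemma \ref{gradestimate}, and by taking $R_0$ sufficiently large every prefactor that multiplies a term one wishes to absorb to the left becomes arbitrarily small. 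The technical difficulty is purely combinatorial: one must verify that every term produced along the way either falls into one of the three buckets $\int W^2$, $\int(|\Acirc|^2|\nabla\Acirc|^2+|\Acirc|^6)$, $R_e^{-6}\int|A|^2$, or is a small multiple of $\int |\nabla^2\Acirc|^2$, $\int |A|^4|\Acirc|^2$ or $\int |A|^2|\nabla A|^2$ that can be absorbed to the left-hand side.
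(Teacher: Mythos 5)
Your proposal is correct and follows essentially the same route as the paper: multiply (\ref{identity 3}) by $\nabla \Acirc$, integrate by parts to pass from $\int|\nabla^2\Acirc|^2$ to $\int|\nabla^*\nabla\Acirc|^2$, control the latter via Simon's identity (\ref{Simon}) in terms of $\int|\nabla^2 H|^2$, $\int|A|^4|\Acirc|^2$ and $\int|\Acirc|^6$, and then invoke the two preceding lemmas, disposing of the cross term $R_e^{-3}\int|HW|\,d\mu$ by Young's inequality exactly as the paper does. The only additional check worth making explicit is that the residual $cR_e^{-3}\int|\nabla A|^2\,d\mu$ term, after applying Lemma \ref{gradestimate}, leaves a piece $cR_e^{-3}\int|\Acirc|^4\,d\mu\leq c\int|\Acirc|^6\,d\mu+cR_e^{-6}\int|A|^2\,d\mu$, which lands in the allowed buckets.
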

\begin{proof}
	Multiplying (\ref{identity 3}) by $\nabla \Acirc$ and applying (\ref{Simon}) we obtain
	\begin{align*}
	\int_{\Sigma} |\nabla^2 \Acirc|^2d\mu\leq& \int_{\Sigma} |\Delta \Acirc|^2+c\int_{\Sigma} |\nabla \Acirc|^2|A|^2d\mu+cR_e^{-4}\int_{\Sigma} |\Acirc||\nabla \Acirc|d\mu \\&+cR_e^{-3}\int_{\Sigma} |\Acirc||A||\nabla \Acirc|d\mu +cR_e^{-3}\int_{\Sigma} |\nabla \Acirc|^2d\mu
	\\\leq& c\int_{\Sigma} |\nabla^2 H|^2d\mu +c\int_{\Sigma} |A|^4|\Acirc|^2 d\mu +c\int |\Acirc|^6d\mu +c\int_{\Sigma} |\nabla A|^2|A|^2d\mu\\&+\frac{c}{R_e^6}\int_{\Sigma} |A|^2d\mu+\frac{c}{R_e^3}\int_{\Sigma} |\nabla A|^2d\mu,
	\end{align*}
	where we used $|\Acirc||\nabla \Acirc|\leq c|A||\nabla A|$.
	The claim now follows from the two previous lemmas and from estimating
	$$
	R_e^{-3}\int_{\Sigma} |HW|d\mu\leq 2R_e^{-6}\int_{\Sigma} |A|^2d\mu+2\int_{\Sigma} |W|^2d\mu.
	$$
\end{proof}
We now need the following Sobolev-type interpolation inequality.
\begin{lem}
	\label{sobolev}
	Let $\Sigma$ be a compact surface satisfying $r_{\min}\geq R_0$. If	$R_0$ is chosen sufficiently large, then any smooth $k-$form $\psi$ satisfies
	$$
	|\psi|^4_{L^\infty(\Sigma)}\leq c|\psi|^2_{L^2(\Sigma)}\bigg(\int_{\Sigma} (|\nabla^2\psi|^2+|\psi|^2H^4)d\mu\bigg).
	$$
	\begin{proof} The proof of Theorem 5.6. in \cite{kuwert2002gradient} caries over to our setting as $\Sigma$ is compact and since the Michael-Simon-Sobolev inequality holds in an asymptotic region of an asymptotically Schwarzschild manifold, see Lemma \ref{michael simon sobolev}. One then easily adapts the proof of Lemma 2.8 in \cite{kuwert2001willmore}.
	\end{proof}
\end{lem}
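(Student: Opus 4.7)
My plan is to follow the Moser iteration scheme of Kuwert--Schätzle, for which the only required analytic inputs are the Michael--Simon--Sobolev inequality (Lemma \ref{michael simon sobolev}) and compactness of $\Sigma$; both are available in our setting. I treat $\psi$ as a scalar via Kato's inequality $|\nabla|\psi||\leq |\nabla\psi|$, so the $k$-form case reduces to the scalar case.

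The first step is to apply Michael--Simon--Sobolev to $u=|\psi|^{p/2}$ for a dyadic sequence of exponents $p=p_j\to\infty$. Since $|\nabla u|\leq \tfrac{p}{2}|\psi|^{p/2-1}|\nabla\psi|$, one gets
$$
\left(\int_\Sigma |\psi|^p\,d\mu\right)^{1/2}\leq c\int_\Sigma\bigg(\tfrac{p}{2}|\psi|^{p/2-1}|\nabla\psi|+|\psi|^{p/2}|H|\bigg)d\mu.
$$
After applying Cauchy--Schwarz to both terms on the right and squaring, this yields a recursion between $\|\psi\|_{L^{p_{j+1}}}$, $\|\psi\|_{L^{p_j}}$, $\|\nabla\psi\|_{L^2}$ and $\|H\|_{L^4}$. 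Iterating and summing the geometric series in the exponents $p_j$ gives an $L^\infty$-estimate schematically of the form $\|\psi\|_\infty^2\leq c\|\psi\|_2\big(\|\nabla\psi\|_2+\|\psi H^2\|_2\big)^{1/2}\cdot(\ldots)$, which after a Young-inequality rearrangement becomes
$$
\|\psi\|_\infty^4\leq c\|\psi\|_2^2\big(\|\nabla\psi\|_2^2+\|\psi H^2\|_2^2\big).
$$

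To finish, I would convert the $\|\nabla\psi\|_2^2$ factor into the required $\|\nabla^2\psi\|_2^2$ factor by integration by parts, using $\int|\nabla\psi|^2\,d\mu=-\int\langle\psi,\nabla^*\nabla\psi\rangle\,d\mu\leq\|\psi\|_2\|\nabla^2\psi\|_2$, and then splitting with $2ab\leq a^2+b^2$; this is exactly what produces the stated form of the right-hand side. The main (and in fact only) obstacle is the bookkeeping of constants $c(p_j)$ arising at each step of the iteration: one must verify that the product $\prod_j c(p_j)^{1/p_j}$ remains bounded by a universal constant. This is standard in dimension two since $\Sigma$ is compact and the exponents double each step so the series telescopes geometrically, and the argument proceeds exactly as in Theorem 5.6 of \cite{kuwert2002gradient} and Lemma 2.8 of \cite{kuwert2001willmore}, with only the Euclidean MSS inequality replaced by Lemma \ref{michael simon sobolev}.
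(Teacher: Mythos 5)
There is a genuine gap at the core of your iteration. The intermediate inequality you propose to establish first,
$$
|\psi|^4_{L^\infty(\Sigma)}\leq c\,|\psi|^2_{L^2(\Sigma)}\big(|\nabla\psi|^2_{L^2(\Sigma)}+|\psi H^2|^2_{L^2(\Sigma)}\big),
$$
with the second derivatives to be inserted only afterwards by integration by parts, is false: it would amount to the borderline embedding $W^{1,2}\hookrightarrow L^\infty$ in dimension two, which fails (on the unit sphere, truncated logarithmic spikes have uniformly bounded $L^2$- and $W^{1,2}$-norms but unbounded sup-norm; a scaling check also shows the gradient term has the wrong homogeneity). Correspondingly, the Moser iteration you describe does not close. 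When you apply the Michael--Simon--Sobolev inequality to $|\psi|^{p/2}$ and then use Cauchy--Schwarz to place $|\nabla\psi|$ in $L^2$, the companion factor $|\psi|^{p/2-1}$ is forced into $L^2$, so the recursion bounds $|\psi|_{L^{p}(\Sigma)}$ in terms of $|\psi|_{L^{p-2}(\Sigma)}$: the exponents grow \emph{arithmetically}, not geometrically. With $p_j=2j$ the accumulated constant behaves like $\prod_j(cp_j)^{2/p_j}$, and $\sum_j p_j^{-1}\log p_j=\infty$, so the resulting bound on $|\psi|_{L^{p_j}(\Sigma)}$ diverges as $j\to\infty$. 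Your closing assertion that ``the exponents double each step so the series telescopes geometrically'' is precisely what fails when the gradient is only controlled in $L^2$.

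The repair --- and this is what the arguments of Theorem 5.6 in \cite{kuwert2002gradient} and Lemma 2.8 in \cite{kuwert2001willmore} actually do --- is to let the second derivatives enter the iteration itself rather than at the very end. One first applies Lemma \ref{michael simon sobolev} to $|\nabla\psi|^2$ (and to $|\psi|^2$) to obtain
$$
|\nabla\psi|^2_{L^4(\Sigma)}\leq c\,|\nabla\psi|_{L^2(\Sigma)}\big(|\nabla^2\psi|_{L^2(\Sigma)}+|H\nabla\psi|_{L^2(\Sigma)}\big),
$$
and once $\nabla\psi$ is controlled in $L^4$ --- an integrability exponent strictly above the dimension --- a further application of the Sobolev inequality to a power of $|\psi|$ does deliver an $L^\infty$ bound; only at that stage does the interpolation $|\nabla\psi|^2_{L^2(\Sigma)}\leq |\psi|_{L^2(\Sigma)}|\nabla^2\psi|_{L^2(\Sigma)}$ enter to produce the stated right-hand side. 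Two further remarks: Kato's inequality controls $|\nabla|\psi||$ but not $|\nabla^2|\psi||$, so once second derivatives appear inside the argument you must work with the tensor $\psi$ directly, as Kuwert and Sch\"atzle do; and the ambient curvature terms generated when commuting derivatives or applying the Codazzi-type identities must be absorbed using $r_{\min}\geq R_0$, which is where the hypothesis on $R_0$ is used beyond the validity of Lemma \ref{michael simon sobolev}.
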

We also need the following multiplicative Sobolev-inequality.
\begin{lem}
	Under the assumptions of the previous lemma we have
	$$
	\int_{\Sigma} (|\Acirc|^2|\nabla \Acirc|^2+|\Acirc|^6)d\mu \leq c \int_{\Sigma} |\Acirc|^2d\mu \int_{\Sigma} (|\nabla^2 \Acirc|^2 +|A|^2|\nabla A|^2+|A|^4|\Acirc|^2)d\mu.
	$$
\end{lem}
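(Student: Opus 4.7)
The strategy is to reduce both summands on the left-hand side to estimates involving only the $L^\infty$ norm of $\Acirc$ and the $L^2$ norm of $\nabla \Acirc$, each of which can then be controlled by the right-hand side. I would apply Lemma \ref{sobolev} with $\psi = \Acirc$, which yields
$$
|\Acirc|^4_{L^\infty(\Sigma)} \leq c\,|\Acirc|^2_{L^2(\Sigma)} \int_\Sigma (|\nabla^2 \Acirc|^2 + |\Acirc|^2 H^4)\, d\mu.
$$
Using $H^2 \leq 2|A|^2$ pointwise, the term $|\Acirc|^2 H^4$ is dominated by $4|A|^4|\Acirc|^2$. Denoting the right-hand side integral of the claim by
$$
F := \int_\Sigma (|\nabla^2 \Acirc|^2 + |A|^2|\nabla A|^2 + |A|^4|\Acirc|^2)\, d\mu,
$$
this yields $|\Acirc|^4_{L^\infty(\Sigma)} \leq c\,|\Acirc|^2_{L^2(\Sigma)}\, F$.

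For the sextic term, I would estimate $\int_\Sigma |\Acirc|^6 d\mu \leq |\Acirc|^4_{L^\infty(\Sigma)} \int_\Sigma |\Acirc|^2 d\mu$ and plug in the $L^\infty$ bound above to obtain $\int_\Sigma |\Acirc|^6 d\mu \leq c\,|\Acirc|^4_{L^2(\Sigma)}\, F$; the extra factor of $|\Acirc|^2_{L^2}$ is then absorbed into the constant by the uniform $L^2$ curvature bound (\ref{l2 curvature bound}). For the cross term, pull out the $L^\infty$ norm of $\Acirc$ once more and control the remaining integral by integration by parts on the closed surface,
$$
\int_\Sigma |\nabla \Acirc|^2 d\mu = -\int_\Sigma \Acirc \cdot \Delta \Acirc\, d\mu \leq |\Acirc|_{L^2(\Sigma)}\, |\Delta \Acirc|_{L^2(\Sigma)} \leq c\,|\Acirc|_{L^2(\Sigma)}\, F^{1/2},
$$
using the pointwise Cauchy--Schwarz bound $|\Delta \Acirc|^2 \leq 2|\nabla^2 \Acirc|^2$. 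Combined with $|\Acirc|^2_{L^\infty(\Sigma)} \leq c\,|\Acirc|_{L^2(\Sigma)}\, F^{1/2}$, obtained by taking square roots in the previous display, this yields $\int_\Sigma |\Acirc|^2|\nabla \Acirc|^2 d\mu \leq c\,|\Acirc|^2_{L^2(\Sigma)}\, F$, which together with the bound on $\int |\Acirc|^6$ gives the claim.

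No step in this argument is conceptually difficult, and the main (mild) point of care is bookkeeping of constants: the natural output of the $L^\infty$ interpolation is a fourth power of $|\Acirc|_{L^2}$ in the $|\Acirc|^6$ estimate, which is converted to the claimed second power only by invoking the a priori $L^2$-curvature bound (\ref{l2 curvature bound}). The summand $|A|^2|\nabla A|^2$ appearing on the right-hand side of the statement is not directly used, but its presence is harmless as it is nonnegative.
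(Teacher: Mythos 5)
Your argument is correct, but it is not the route the paper takes: the paper simply adapts the direct Michael--Simon--Sobolev argument of Lemma 2.5 in \cite{kuwert2001willmore} (applying Lemma \ref{michael simon sobolev} to a function built from $|\Acirc|\,|\nabla \Acirc|$ and $|\Acirc|^3$, then using H\"older and Young), whereas you deduce the inequality from the already-established $L^\infty$-interpolation Lemma \ref{sobolev} applied to $\psi=\Acirc$, combined with the integration by parts $\int_\Sigma|\nabla\Acirc|^2\,d\mu\leq|\Acirc|_{L^2}|\Delta\Acirc|_{L^2}\leq c\,|\Acirc|_{L^2}|\nabla^2\Acirc|_{L^2}$ and the pointwise trace bound $|\Delta\Acirc|^2\leq 2|\nabla^2\Acirc|^2$. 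Both routes ultimately rest on the Michael--Simon--Sobolev inequality, and since Lemma \ref{sobolev} precedes this lemma in the paper there is no circularity; your version is arguably the quicker derivation given what is already available. The one genuine trade-off is in the sextic term: your interpolation naturally produces $c\,|\Acirc|^4_{L^2(\Sigma)}F$, and you must invoke the standing bound (\ref{l2 curvature bound}) (equivalently, $|\Acirc|_{L^2}\leq|A|_{L^2}\leq c$, which is indeed among the blanket assumptions of Section 3) to reduce the power, so your constant picks up a dependence on that a priori bound. The Kuwert--Sch\"atzle-style proof yields the scale-invariant form with the single factor $\int_\Sigma|\Acirc|^2\,d\mu$ directly. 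For the way the lemma is used in Lemma \ref{linfty} --- absorbing the left-hand side when $|\Acirc|^2_{L^2}\leq\epsilon$ --- this difference is immaterial, so your proof is acceptable as written.
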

\begin{proof}
	This is a straight forward adaption of Lemma 2.5 in \cite{kuwert2001willmore}. Again, the proof only relies on the Michael-Simon-Sobolev inequality, Young's inequality and H\"older's inequality.
\end{proof}
At this point, a small curvature assumption allows us to absorb the term on the left hand side of the previous equation in Lemma \ref{absorbing}. This finally yields an $L^\infty$-estimate for $\Acirc$.
\begin{lem}
	\label{linfty}
	Under the assumptions of Lemma \ref{section 4 lemma with assumptions} there exists a constant $\epsilon(m,\eta, R_0)>0$ such that if
	$$
	\int_{\Sigma} |{\Acirc}|^2d\mu\leq \epsilon,
	$$
	then
	\begin{align}
	\int_{\Sigma} |\nabla ^2\Acirc|^2d\mu+\int_{\Sigma}|A|^2|\nabla A|^2d\mu+\int_{\Sigma} |A|^4|\Acirc|^2d\mu
	\leq c\int_{\Sigma} |W|^2d\mu+cR_{e}^{-6}\int_{\Sigma} |A|^2d\mu.
	\label{b2ndder estimate}
	\end{align}
	In particular, we have
	$$
	|\Acirc|_{L^\infty(\Sigma)}^4\leq c|\Acirc|^2_{L^2(\Sigma)}\bigg( \int_{\Sigma} |W|^2d\mu+R_e^{-6}\int_\Sigma |A|^2 d\mu\bigg).
	$$
\end{lem}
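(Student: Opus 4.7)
The plan is to combine Lemma \ref{absorbing} with the multiplicative Sobolev inequality stated just above and then use the interpolation inequality of Lemma \ref{sobolev} for the $L^\infty$ bound. The small energy hypothesis $\int_\Sigma |\Acirc|^2 d\mu \leq \epsilon$ is precisely what is needed to absorb the cubic/quartic curvature terms on the right hand side of Lemma \ref{absorbing} into the left hand side.

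First, I would apply the multiplicative Sobolev inequality to estimate
\[
\int_{\Sigma} (|\Acirc|^2|\nabla \Acirc|^2+|\Acirc|^6)\, d\mu \leq c\,|\Acirc|^2_{L^2(\Sigma)}\int_{\Sigma}\bigl(|\nabla^2 \Acirc|^2 +|A|^2|\nabla A|^2+|A|^4|\Acirc|^2\bigr)\, d\mu.
\]
Plugging this into the conclusion of Lemma \ref{absorbing} and choosing $\epsilon=\epsilon(m,\eta,R_0)>0$ so small that $c\epsilon \leq 1/2$, the right hand side term involving $|\nabla \Acirc|^2|\Acirc|^2$ and $|\Acirc|^6$ can be absorbed into the left hand side. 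This yields (\ref{b2ndder estimate}).

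Next, for the $L^\infty$-estimate I apply Lemma \ref{sobolev} with $\psi=\Acirc$, which gives
\[
|\Acirc|_{L^\infty(\Sigma)}^4 \leq c\,|\Acirc|^2_{L^2(\Sigma)}\int_{\Sigma}\bigl(|\nabla^2\Acirc|^2 + |\Acirc|^2 H^4\bigr)\, d\mu.
\]
The pointwise bound $|\Acirc|^2 H^4 \leq c(|A|^4|\Acirc|^2)$ allows one to estimate the $H^4$ term by the quantities already controlled in (\ref{b2ndder estimate}). Substituting (\ref{b2ndder estimate}) into the right hand side then gives the desired $L^\infty$-bound
\[
|\Acirc|_{L^\infty(\Sigma)}^4\leq c\,|\Acirc|^2_{L^2(\Sigma)}\bigg( \int_{\Sigma} |W|^2 d\mu+R_e^{-6}\int_\Sigma |A|^2 d\mu\bigg).
\]

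The only delicate point is that the absorption requires $\epsilon$ to depend on the constant $c$ from the multiplicative Sobolev inequality, which in turn depends on the Michael--Simon--Sobolev constant and hence on $m,\eta,R_0$; there is no circularity since all these constants have already been fixed by the standing assumptions. No new ingredients beyond the preceding three lemmas and the standing hypotheses are needed.
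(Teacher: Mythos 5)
Your proposal is correct and follows exactly the route the paper intends: the paper's proof is simply "a direct consequence of the previous three lemmas," and you have spelled out precisely that combination — Lemma \ref{absorbing}, the multiplicative Sobolev inequality with absorption via the small-energy hypothesis, and Lemma \ref{sobolev} applied to $\psi=\Acirc$ together with $H^4|\Acirc|^2\leq c|A|^4|\Acirc|^2$. No issues.
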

\begin{proof}
	This is a direct consequence of the previous three lemmas.
\end{proof}
It turns out that the integral curvature estimates also imply an improved estimate for $|\Acirc|_{L^2(\Sigma)}$.
\begin{lem}
	Under the assumptions of Lemma \ref{section 4 lemma with assumptions} there exists a constant $\epsilon(m,\eta,R_0)>0$ such that if
	$$
	\int_{\Sigma} |{\Acirc}|^2d\mu\leq \epsilon,
	$$
	then
	$$
	\int_\Sigma |\Acirc|^2 d\mu \leq cR_e^2 \int_\Sigma (|\nabla H|^2+|\operatorname{Rc}(\nu,\cdot)^T|^2) d\mu.
	$$
	\label{improved BL2}
\end{lem}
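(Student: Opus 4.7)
The plan is to combine Simons' identity \eqref{simonsidentity} with the Codazzi identity $\nabla^{*}\Acirc=-\tfrac{1}{2}\nabla H+\operatorname{Rc}(\nu,\cdot)^{T}$ in order to control $\int H^{2}|\Acirc|^{2}d\mu$ by $\int|\nabla H|^{2}d\mu$ plus curvature errors, and then to replace the weight $H^{2}$ by its near-round value $\sim R_{e}^{-2}$ using the Müller--de Lellis approximation in Lemma \ref{muldelel}.

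Integrating \eqref{simonsidentity} over $\Sigma$, the left-hand side becomes $-\int|\nabla\Acirc|^{2}d\mu$ after one integration by parts. Both $\int\Acirc\cdot\nabla^{2}H\,d\mu$ and $2\int\Acirc\cdot\nabla\operatorname{Rc}(\nu,\cdot)^{T}\,d\mu$ on the right are rewritten by a single integration by parts as integrals of $\nabla^{*}\Acirc$ paired with $\nabla H$ and $\operatorname{Rc}(\nu,\cdot)^{T}$, and the Codazzi substitution for $\nabla^{*}\Acirc$ is applied. The two mixed cross-terms $\int\nabla H\cdot\operatorname{Rc}(\nu,\cdot)^{T}d\mu$ cancel exactly, leaving
\[
\tfrac{1}{2}\int H^{2}|\Acirc|^{2}d\mu + \int|\nabla\Acirc|^{2}d\mu + 2\int|\operatorname{Rc}(\nu,\cdot)^{T}|^{2}d\mu = \tfrac{1}{2}\int|\nabla H|^{2}d\mu + \int|\Acirc|^{4}d\mu - \int\Acirc*\Acirc*\operatorname{Rm}\,d\mu.
\]
Dropping the non-negative terms on the left apart from the leading one and using the ambient bound $|\operatorname{Rm}|\le cR_{e}^{-3}$, one obtains
\[
\int H^{2}|\Acirc|^{2}d\mu \leq \int|\nabla H|^{2}d\mu + 2\int|\Acirc|^{4}d\mu + cR_{e}^{-3}\int|\Acirc|^{2}d\mu.
\]

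The quartic term is absorbed via $|\Acirc|_{L^{\infty}}^{2}\leq c\sqrt{\epsilon}$, which follows by combining Lemma \ref{linfty} with the interpolation Lemma \ref{sobolev} and the smallness of $\int|\Acirc|^{2}$; this yields $\int|\Acirc|^{4}\leq c\sqrt{\epsilon}\int|\Acirc|^{2}$. To extract the complementary lower bound $\int H^{2}|\Acirc|^{2}d\mu\geq cR_{e}^{-2}\int|\Acirc|^{2}d\mu$, I set $\bar H_{0}:=2/R_{e}$ and use the pointwise estimate $H^{2}\geq\tfrac{1}{2}\bar H_{0}^{2}-(H-\bar H_{0})^{2}$, so that
\[
\int H^{2}|\Acirc|^{2}d\mu \geq \tfrac{1}{2}\bar H_{0}^{2}\int|\Acirc|^{2}d\mu - |\Acirc|_{L^{\infty}}^{2}\int(H-\bar H_{0})^{2}d\mu.
\]
Lemma \ref{muldelel} gives $|A_{e}-\tilde A_{e}|_{L^{2}(S)}\leq c|\Acirc_{e}|_{L^{2}(\Sigma)}$, hence $L^{2}$-closeness of $H_{e}$ to $2/R_{e}$, and Lemma \ref{identities} transfers this to $H$; combined with the smallness of $|\Acirc|_{L^{\infty}}^{2}$ this makes the error term absorbable. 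Adding the non-negative quantity $cR_{e}^{2}\int|\operatorname{Rc}(\nu,\cdot)^{T}|^{2}d\mu$ to the right-hand side for free then yields the claim.

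The main obstacle is the lower bound step: one must carefully track how the Euclidean $W^{2,2}$-approximation of Lemma \ref{muldelel} translates into a bound for $H$ with respect to the ambient metric $g$ via Lemma \ref{identities}, so that the errors coming from the difference between Euclidean and intrinsic quantities carry a truly small prefactor and do not interfere with the leading $R_{e}^{-2}$ coefficient that is needed to invert into $\int|\Acirc|^{2}\leq cR_{e}^{2}\int|\nabla H|^{2}d\mu$.
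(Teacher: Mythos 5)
Your first step is fine and matches the paper: integrating Simons' identity \eqref{simonsidentity}, integrating by parts and substituting $\nabla^*\Acirc=-\tfrac12\nabla H+\operatorname{Rc}(\nu,\cdot)^T$ does give (the cross terms indeed cancel) an inequality of the form $\int_\Sigma(|\nabla\Acirc|^2+H^2|\Acirc|^2)d\mu\leq c\int_\Sigma(|\nabla H|^2+|\Acirc|^4+|\operatorname{Rc}(\nu,\cdot)^T|^2+R_e^{-3}|\Acirc|^2)d\mu$. The two subsequent steps, however, both have genuine gaps, and both are repaired in the paper by the same missing tool: the Michael--Simon--Sobolev inequality (Lemma \ref{michael simon sobolev}). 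For the quartic term, your bound $|\Acirc|^2_{L^\infty}\leq c\sqrt{\epsilon}$ is not available under the hypotheses: Lemma \ref{linfty} controls $|\Acirc|^4_{L^\infty}$ only in terms of $\int_\Sigma|W|^2d\mu$, and no bound on the Willmore operator is assumed here (the lemma is a static statement about an arbitrary surface in the class, not about a flow). Even granting $|\Acirc|^2_{L^\infty}\leq c\sqrt\epsilon$, the resulting term $c\sqrt\epsilon\int|\Acirc|^2d\mu$ could never be absorbed against the coefficient $R_e^{-2}$ you aim to produce on the left, since $\epsilon$ is a fixed constant while $R_e$ is unbounded. The correct absorption is the multiplicative Sobolev inequality $\int_\Sigma|\Acirc|^4d\mu\leq c|\Acirc|^2_{L^2(\Sigma)}\int_\Sigma(|\nabla\Acirc|^2+H^2|\Acirc|^2)d\mu$, which reproduces exactly the left-hand side of the Simons inequality with the small prefactor $\epsilon$ and is absorbed there.

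The lower-bound step fails for the same structural reason. The error term $|\Acirc|^2_{L^\infty}\int_\Sigma(H-\bar H_0)^2d\mu$ is of size roughly $|\Acirc|^2_{L^\infty}\bigl(\int|\Acirc|^2d\mu+\tau_e^2+R_e^{-2}\bigr)$; to absorb its $\int|\Acirc|^2$ part into $\tfrac12\bar H_0^2\int|\Acirc|^2d\mu\sim R_e^{-2}\int|\Acirc|^2d\mu$ you would need $|\Acirc|^2_{L^\infty}\ll R_e^{-2}$, which is far stronger than the hypothesis $|\Acirc|^2_{L^2}\leq\epsilon$ and is false for general surfaces in this class; the remaining part $|\Acirc|^2_{L^\infty}R_e^{-2}$ is not controlled by the right-hand side of the claim at all. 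The paper avoids any pointwise lower bound on $H$: it applies Michael--Simon--Sobolev to $|\Acirc|$ itself, giving $\int_\Sigma|\Acirc|^2d\mu\leq c|\Sigma|\int_\Sigma(|\nabla\Acirc|^2+H^2|\Acirc|^2)d\mu$, and then uses $|\Sigma|\leq cR_e^2$ together with the (already absorbed) Simons inequality; the leftover $cR_e^2\cdot R_e^{-3}\int|\Acirc|^2d\mu$ is absorbed for $R_0$ large. Note that this route crucially keeps the $|\nabla\Acirc|^2$ term in play, which your approach discards. So the skeleton is right, but both closing steps need to be replaced.
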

\begin{proof}
	Integrating (\ref{simonsidentity}) and using integration by parts as well as the identity $\nabla^* \Acirc=-\frac12 \nabla H +\operatorname{Rc}(\nu,\cdot)^T$ yields
	$$
	\int_\Sigma (|\nabla \Acirc|^2+H^2|\Acirc|^2) d\mu  \leq c \int_\Sigma (|\nabla H|^2 + |\Acirc|^4+|\operatorname{Rc}(\nu,\cdot)^T|^2+R_e^{-3}|\Acirc|^2)d\mu.	
	$$
	Next, using Lemma \ref{michael simon sobolev} and H\"older's inequality, we obtain
	$$
	\int_\Sigma |\Acirc|^4 d\mu \leq |\Acirc|_{L^2(\Sigma)}^2\bigg( \int_\Sigma (|\nabla \Acirc|^2+H^2|\Acirc|^2)d\mu\bigg).
	$$
	Hence, this term can be absorbed. Again with the Michael-Simon-Sobolev inequality and H\"older's inequality we get
	$$
	\int_\Sigma |\Acirc|^2 d\mu \leq c |\Sigma|\int_\Sigma (|\nabla \Acirc|^2+H^2|\Acirc|^2) d\mu.
	$$
	From this the claim follows.
\end{proof}
Finally, we prove two useful $W^{2,2}-$ and $L^\infty-$estimates for the mean curvature.
\begin{lem}
	Under the assumptions of Lemma \ref{section 4 lemma with assumptions} there holds
	$$
	\int_{\Sigma_t} |\nabla^2 H|^2d\mu +\frac14 \int_{\Sigma_t} H^2|\nabla H|^2d\mu \leq c(|\Delta H|_{L^2(\Sigma_t)}^2+|\Acirc|^4_{L^\infty(\Sigma_t)}+R_e^{-6}).
	$$
	\label{bochner}
\end{lem}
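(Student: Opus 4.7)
The plan is to apply identity \eqref{identity2b} directly: test it against $\nabla H$, integrate by parts, and carefully absorb the lower-order terms using Young's inequality together with the uniform bound $|H|_{L^2(\Sigma)}\leq c$ from \eqref{l2 curvature bound}.

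First, I would take the pointwise inner product of \eqref{identity2b} with $\nabla H$ and integrate over $\Sigma_t$. Integration by parts on the left turns $\int \nabla^*\nabla^2 H \cdot \nabla H \, d\mu$ into $\int |\nabla^2 H|^2\, d\mu$, while the first term on the right, $-\int \nabla(\Delta H)\cdot \nabla H\, d\mu$, becomes $\int (\Delta H)^2\, d\mu$. The last term $-\tfrac14 H^2\nabla H$ contributes $-\tfrac14\int H^2 |\nabla H|^2\, d\mu$, which we move to the left-hand side to produce precisely the quantity appearing in the statement.

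The remaining two terms are the $\operatorname{Rm}\ast \nabla H$ and $\Acirc\ast\Acirc\ast\nabla H$ contributions, both paired with $\nabla H$. Using $|\operatorname{Rm}|\leq cR_e^{-3}$ in the asymptotic region and the trivial $|\Acirc|^2 \leq |\Acirc|^2_{L^\infty(\Sigma_t)}$, these are bounded respectively by
\[
cR_e^{-3}\int_{\Sigma_t}|\nabla H|^2\,d\mu \quad \text{and} \quad |\Acirc|^2_{L^\infty(\Sigma_t)}\int_{\Sigma_t}|\nabla H|^2\,d\mu.
\]
The key small observation is that $\int_{\Sigma_t} |\nabla H|^2\, d\mu$ itself can be controlled linearly by $|\Delta H|_{L^2(\Sigma_t)}$: integrating by parts and using \eqref{l2 curvature bound},
\[
\int_{\Sigma_t}|\nabla H|^2 \, d\mu = -\int_{\Sigma_t} H\,\Delta H\, d\mu \leq |H|_{L^2(\Sigma_t)}\,|\Delta H|_{L^2(\Sigma_t)}\leq c\, |\Delta H|_{L^2(\Sigma_t)}.
\]

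Finally, apply Young's inequality to each of the two error terms:
\[
cR_e^{-3}|\Delta H|_{L^2(\Sigma_t)} \leq cR_e^{-6} + \tfrac{1}{4}|\Delta H|^2_{L^2(\Sigma_t)}, \qquad |\Acirc|^2_{L^\infty(\Sigma_t)}|\Delta H|_{L^2(\Sigma_t)} \leq |\Acirc|^4_{L^\infty(\Sigma_t)} + \tfrac{1}{4}|\Delta H|^2_{L^2(\Sigma_t)}.
\]
Combining with the already-present $\int(\Delta H)^2\,d\mu$ on the right yields the claimed estimate, with only constants (independent of the geometry beyond $m,\eta$) appearing. There is no genuine obstacle here; the proof is a textbook Bochner-type argument and the only care needed is to avoid introducing terms like $\int H^2|\nabla H|^2\, d\mu$ or $\int|\nabla^2 H|^2\, d\mu$ on the right-hand side that cannot be absorbed, which is precisely why identity \eqref{identity2b} (with the $\Acirc\ast\Acirc\ast\nabla H$ form rather than the $\Acirc\ast\Acirc\ast\nabla\Acirc$ form of \eqref{identity2}) is the right starting point.
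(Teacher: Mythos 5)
Your argument is correct and is essentially identical to the paper's proof: both test \eqref{identity2b} against $\nabla H$, integrate by parts to produce the Bochner-type identity, control the $\operatorname{Rm}\ast\nabla H$ and $\Acirc\ast\Acirc\ast\nabla H$ error terms via $\int_{\Sigma_t}|\nabla H|^2\,d\mu\leq c\,|\Delta H|_{L^2(\Sigma_t)}$, and finish with Young's inequality. The only cosmetic difference is that the paper obtains the gradient bound by pairing $\Delta H$ with $\overline{H_S}-H$ rather than with $H$ itself, which changes nothing in the conclusion.
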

\begin{proof}
	We multiply (\ref{identity2b}) by $\nabla H$ to obtain the Bochner-type identity
	\begin{align*}
	\int_{\Sigma_t} |\nabla^2 H|^2d\mu +\frac14 \int_{\Sigma_t} H^2|\nabla H|^2d\mu \leq & \int_{\Sigma_t} |\Delta H|^2d\mu+\int_{\Sigma_t} |\overline{\operatorname{Rm}}*\nabla H * \nabla H| d\mu +\int_{\Sigma_t} |\Acirc|^2|\nabla H|^2d\mu \\
	\leq & |\Delta H|_{L^2(\Sigma_t)}^2+c\epsilon(R_e^{-3}+|\Acirc|^2_{L^\infty(\Sigma_t)})|\Delta H|_{L^2(\Sigma_t)} \\
	\leq& c(|\Delta H|_{L^2(\Sigma_t)}^2+|\Acirc|^4_{L^\infty(\Sigma_t)}+R_e^{-6}).
	\end{align*}
\end{proof}
To state the $L^\infty$-estimate we introduce the quantity $\overline{H_S}=2R_e^{-1}\phi^{-2}(R_e)-2mR_e^{-2}\phi^{-3}(R_e)$ to be the mean curvature, with respect to the Schwarzschild metric, of a centred sphere with Euclidean radius $R_e$.
\begin{lem}
	Under the assumptions of Lemma \ref{section 4 lemma with assumptions} and provided $R_0^{-1}, |\Acirc|^2_{L^2(\Sigma))}<\epsilon$ there holds
	\label{H linfty estimate}
	$$
	|H-\overline{ H_S}|^4_{L^\infty(\Sigma_t)}\leq c(|\Acirc|^2_{L^2(\Sigma_t)}+R_e^{-2})(|\Delta H|_{L^2(\Sigma_t)}^2+|\Acirc|^4_{L^\infty(\Sigma_t)}+R_e^{-6}).
	$$
\end{lem}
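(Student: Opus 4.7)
The plan is to apply the Sobolev interpolation of Lemma \ref{sobolev} to $u := H - \overline{H_S}$, viewed as a smooth function on $\Sigma$. Because $\overline{H_S}$ depends only on $R_e$ and is therefore constant along $\Sigma$, one has $\nabla u = \nabla H$ and $\nabla^2 u = \nabla^2 H$, so the interpolation yields
\begin{align*}
|H - \overline{H_S}|^4_{L^\infty(\Sigma)} \leq c\,|H - \overline{H_S}|^2_{L^2(\Sigma)}\bigg(\int_\Sigma |\nabla^2 H|^2\,d\mu + \int_\Sigma (H - \overline{H_S})^2 H^4\,d\mu\bigg).
\end{align*}
The argument then reduces to an $L^2$-bound $|H - \overline{H_S}|^2_{L^2(\Sigma)} \leq c(|\Acirc|^2_{L^2(\Sigma)} + R_e^{-2})$ together with control of the parenthesized factor by $c(|\Delta H|^2_{L^2(\Sigma)} + |\Acirc|^4_{L^\infty(\Sigma)} + R_e^{-6})$, modulo terms that will be absorbed on the left using the smallness hypotheses on $|\Acirc|^2_{L^2(\Sigma)}$ and $R_0^{-1}$.

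The $L^2$-bound is the main analytic step and will be the principal obstacle, since it requires carefully juggling the three background metrics $g$, $g_S$, $g_e$ and the approximating round sphere. I would write $u = (H - H_S) + (H_S - \overline{H_S})$ and estimate the two summands separately. Lemma \ref{identities} gives $|H - H_S|\leq c\eta(r^{-3}+r^{-2}|A|)$, which combined with (\ref{l2 curvature bound}) contributes at most $cR_e^{-2}$ in $L^2$. For the second summand, the explicit formula $H_S = \phi^{-2}H_e - 2mr^{-2}\phi^{-3}(\nu_e\cdot\partial_r)$ from Lemma \ref{identities} and the pointwise bound $|r - R_e|\leq cR_e(\tau_e+|\Acirc_e|_{L^2(\Sigma)})$ (from the $L^\infty$-part of Lemma \ref{muldelel}) yield
\begin{align*}
|H_S - \overline{H_S}| \leq c|H_e - 2R_e^{-1}| + cR_e^{-2}(\tau_e + |\Acirc_e|_{L^2(\Sigma)}).
\end{align*}
Applying Lemma \ref{muldelel} to the conformal parametrization $S^2_{a_e}(R_e)\to\Sigma$, on which the identity has the constant mean curvature $2R_e^{-1}$, and transferring back via $|h^2-1|_{L^\infty}\leq c|\Acirc_e|_{L^2(\Sigma)}$, produces $|H_e - 2R_e^{-1}|^2_{L^2(\Sigma,d\mu_e)}\leq c|\Acirc_e|^2_{L^2(\Sigma)}$. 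Using $\tau_e\leq\delta$ and Lemma \ref{identities} to replace the Euclidean area element by $d\mu$, one obtains the desired $L^2$-bound on $u$.

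For the parenthesized factor, Lemma \ref{bochner} immediately controls $\int|\nabla^2 H|^2\,d\mu$ by $c(|\Delta H|^2_{L^2(\Sigma)} + |\Acirc|^4_{L^\infty(\Sigma)} + R_e^{-6})$. For $\int u^2 H^4\,d\mu$ I would split pointwise $H^4\leq c(u^4 + \overline{H_S}^4)\leq c(u^4 + R_e^{-4})$. The $R_e^{-4}u^2$ contribution, after multiplying by the first factor $|u|^2_{L^2(\Sigma)}$, produces only terms of the admissible form $(|\Acirc|^2_{L^2(\Sigma)}+R_e^{-2})(|\Acirc|^4_{L^\infty(\Sigma)}+R_e^{-6})$ after a further Young-type estimate using $|\Acirc|^2_{L^2(\Sigma)}\leq cR_e^2|\Acirc|^2_{L^\infty(\Sigma)}$. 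The remaining $\int u^6\,d\mu\leq |u|^4_{L^\infty(\Sigma)}|u|^2_{L^2(\Sigma)}$ introduces an $|u|^4_{L^\infty(\Sigma)}$ term on the right multiplied by $|u|^4_{L^2(\Sigma)}$, which by the $L^2$-bound from the previous paragraph is controlled by a multiple of $\epsilon$ and can therefore be absorbed on the left provided $\epsilon$ is sufficiently small. This closes the argument and delivers the stated estimate.
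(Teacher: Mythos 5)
Your argument is correct and follows essentially the same route as the paper's: Lemma \ref{sobolev} applied to $H-\overline{H_S}$, the $L^2$-bound $|H-\overline{H_S}|^2_{L^2(\Sigma_t)}\leq c(|\Acirc|^2_{L^2(\Sigma_t)}+R_e^{-2})$ (which the paper simply imports as estimate (\ref{h l2 est}) of Lemma \ref{some estimates} rather than rederiving it from Lemma \ref{identities} and Lemma \ref{muldelel} as you do), Lemma \ref{bochner} for the Hessian term, the splitting $H^4\leq c(|H-\overline{H_S}|^4+R_e^{-4})$, and absorption of the top-order term using the smallness of $\epsilon$. Your concluding Young-type treatment of the $R_e^{-4}$-contribution via $|\Acirc|^2_{L^2(\Sigma_t)}\leq cR_e^2|\Acirc|^2_{L^\infty(\Sigma_t)}$ is likewise exactly the paper's final step.
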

\begin{proof}
	Using Lemma \ref{some estimates} below we obtain $|H-\overline{ H_S}|^2_{L^2(\Sigma_t)}\leq c(|\Acirc|^2_{L^2(\Sigma_t)}+R_e^{-2})\leq c\epsilon$. Now, Lemma \ref{sobolev} and Lemma \ref{bochner} as well as $|\overline{ H_S}|\leq cR_e^{-1}$ imply that 
	\begin{align*}
	&|H-\overline{ H_S}|^4_{L^\infty(\Sigma_t)}\\\leq& c|H-\overline{ H_S}|^2_{L^2}\bigg(\int_{\Sigma_t} |\nabla^2 H|^2d\mu +\int_{\Sigma_t} H^4|H-\overline{ H_S}|^2d\mu \bigg)
	\\\leq &c(|\Acirc|^2_{L^2(\Sigma_t)}+R_e^{-2})(|\Delta H|_{L^2(\Sigma_t)}^2+|\Acirc|^4_{L^\infty(\Sigma_t)}+R_e^{-6})\\&+c(|\Acirc|^2_{L^2(\Sigma_t)}+R_e^{-2})\bigg(R_e^{-4}\int_{\Sigma_t} |H-\overline{ H_S}|^2d\mu +|H-\overline{ H_S}|^4_{L^\infty(\Sigma_t)}\int_{\Sigma_t} |H-\overline{ H_S}|^2d\mu \bigg).
	\end{align*}
	The third term can be absorbed using (\ref{h l2 est}) and estimating $|\Acirc|^2_{L^2(\Sigma_t)}\leq \epsilon$. Another application of (\ref{h l2 est}) yields
	$$
	|H-\overline{ H_S}|^2_{L^2(\Sigma_t)}\leq |\Acirc|^2_{L^2(\Sigma_t)}+cR_e^{-2} \leq c R_e^{4 }|\Acirc|^4_{L^\infty(\Sigma_t)}+cR_e^{-2}. 
	$$
	Using this to estimate the second term, the claim follows. 
\end{proof}
\section{A-priori Estimates}
In this section, we specify the estimates from the previous section to the situation of an area-preserving Willmore flow. We first prove the following useful lemma.
\begin{lem}
	Let $\Sigma_t$ be an admissible area preserving Willmore flow. Then every $\Sigma_t$ satisfies
	\begin{align}
	\int_{\Sigma_t} |\operatorname{Rc}(\nu,\cdot)^T|^2 d\mu \leq c R_e^{-4}(R_e|\Acirc|^2_{L^2(\Sigma_t)}+\tau_e^2+ \eta R_e^{-1}), 
	\label{rct est}\\
	|H-\overline{ H_S}|_{L^2(\Sigma_t)} \leq c (|\Acirc|_{L^2(\Sigma_t)}+m\tau_e R_e^{-1}+\eta R_e^{-2}). \label{h l2 est}
	\end{align}
	\label{some estimates}Here, $\overline{H}_S:=2R_e^{-1}\phi^{-2}(R_e)-2mR_e^{-2}\phi^{-3}(R_e)$.
	\begin{proof}
		Using Lemma \ref{identities}, we may assume that $\eta =0$. Let $e_i$ be an orthonormal frame of $T_p\Sigma_t$ at a point $p$. There holds
		$$
		m^2r^6\phi^4 |\operatorname{Rc}(\nu,\cdot)^T|^2=9 (\partial_r\cdot \nu_e)^2 \partial_r \cdot e_i \partial_r \cdot e_i =9 (\partial_r\cdot \nu_e)^2 \partial_r\cdot (\partial_r-\nu_e\cdot \partial_r \nu_e).
		$$
		We would like to express the right hand side in terms of the approximating sphere $S$ introduced in Lemma \ref{muldelel}. Denoting the respective quantities regarding $S$ with a tilde we find using Lemma \ref{muldelel}
		\begin{align}
		&\bigg|\int_S \tilde r^{-6}(\tilde\partial_r\cdot \tilde\nu_e)^2 \tilde\partial_r\cdot (\tilde\partial_r-\tilde\nu_e\cdot \tilde\partial_r \tilde\nu_e)d\tilde\mu_e-\int_\Sigma r^{-6}(\partial_r\cdot \nu_e)^2 \partial_r\cdot (\partial_r-\nu_e\cdot \partial_r \nu_e) d\mu_e \bigg|\notag\\\leq& cR_e^{-4}|\Acirc_e|^2_{L^2(\Sigma_t)}.
		\end{align}
		By Young's inequality and Lemma \ref{identities} this error can be further estimated via
		$$
		R_e^{-4}|\Acirc_e|_{L^2(\Sigma_t)}
		\leq cR_e^{-5}+cR_e^{-3}|\Acirc|^2_{L^2(\Sigma_t)}.
		$$
		Now, for a round sphere, there holds
		$\tilde\partial_{ r}=\tilde r^{-1}(a_e+R_e\tilde\nu_e)$. Hence,
		$$
		(\tilde\partial_{ r}-\tilde\partial_{ r}\cdot\tilde\nu_e \tilde \nu_e)\cdot \partial_{\tilde r}
		=\tilde r^{-2}(a_e+R_e\tilde \nu_e-a_e\cdot \tilde \nu_e\tilde \nu_e-R_e\tilde\nu_e)\cdot (a_e+R_e\tilde \nu_e)
		=\tilde r^{-2}(a_e\cdot a_e-(a_e\cdot\tilde \nu_e)^2).
		$$
		As this term can be estimated by $c\tau_e^2$ the first claim follows. The second claim is a straight-forward application of Lemma \ref{muldelel} and Lemma \ref{ more comp}.
	\end{proof}
\end{lem}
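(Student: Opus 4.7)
The plan is to reduce both estimates to explicit computations on the approximating round sphere $S = S^2_{a_e}(R_e)$ furnished by Lemma \ref{muldelel}, exploiting cancellations coming from the explicit form of the Schwarzschild Ricci tensor in the first claim and from the mean curvature of a centred coordinate sphere in the second.

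For (\ref{rct est}), I would first use Lemma \ref{identities} to reduce to the pure Schwarzschild case (i.e.\ set $\eta = 0$), which produces the $\eta R_e^{-5}$ contribution per unit area and hence, after integrating over $\Sigma$, the $\eta R_e^{-1}$ term inside the parentheses. With that reduction, (\ref{rc schwarzschild}) lets me write the integrand $|\operatorname{Rc}_S(\nu,\cdot)^T|^2$ pointwise as a multiple of $r^{-6}\phi^{-4}(\partial_r\cdot\nu_e)^2\,\partial_r\cdot(\partial_r - (\partial_r\cdot\nu_e)\nu_e)$, i.e.\ a quantity whose tangential factor vanishes identically on centred round spheres. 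The next step is to transfer the integral from $\Sigma$ to $S$ via the conformal parametrization $\psi$ of Lemma \ref{muldelel}; the $L^\infty$ bound on the conformal factor together with the $L^2$-closeness estimates yield a transfer error bounded by $cR_e^{-4}|\Acirc_e|_{L^2(\Sigma)}$, which after the elementary splitting $R_e^{-4}|\Acirc_e|_{L^2} \le cR_e^{-5} + cR_e^{-3}|\Acirc|^2_{L^2}$ falls within the stated bound. On $S$ the formula $\tilde\partial_r = \tilde r^{-1}(a_e + R_e\tilde\nu_e)$ gives $(\tilde\partial_r - (\tilde\partial_r\cdot\tilde\nu_e)\tilde\nu_e)\cdot\tilde\partial_r = \tilde r^{-2}(|a_e|^2 - (a_e\cdot\tilde\nu_e)^2)$, which is bounded pointwise by $cR_e^2\tau_e^2$, and after integrating against the weight $\tilde r^{-6}\,d\tilde\mu_e \sim R_e^{-4}$ this produces exactly the $\tau_e^2 R_e^{-4}$ contribution.

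For (\ref{h l2 est}), I would again reduce to Schwarzschild via Lemma \ref{identities}, which accounts for the $\eta R_e^{-2}$ term. Writing $H_S = \phi^{-2}H_e - 2mr^{-2}\phi^{-3}\nu_e\cdot\partial_r$ and $\overline{H_S} = 2R_e^{-1}\phi^{-2}(R_e) - 2mR_e^{-2}\phi^{-3}(R_e)$, the difference splits into a Euclidean mean-curvature piece $\phi^{-2}(H_e - 2/R_e)$ and a mass-correction piece. The Euclidean piece is controlled directly by Lemma \ref{muldelel}, since on $S$ one has $\tilde H_e = 2/R_e$ and $|H_e - \tilde H_e|_{L^2(\Sigma)} \le c|\Acirc_e|_{L^2}$, yielding the $|\Acirc|_{L^2}$ term after translating between $|\cdot|_{L^2(\Sigma)}$ and $|\cdot|_{L^2(S)}$ using Lemma \ref{muldelel} and Lemma \ref{ more comp}. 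For the mass-correction piece, $r^{-2}\phi^{-3}(\nu_e\cdot\partial_r)$ deviates from $R_e^{-2}\phi^{-3}(R_e)$ by an amount of order $R_e^{-2}\tau_e$ once one evaluates on $S$ using $\nu_e\cdot\partial_r|_S = R_e\tilde r^{-1} + (a_e\cdot\tilde\nu_e)\tilde r^{-1}$ together with $r = R_e(1 + O(\tau_e))$; multiplying by $m$ and integrating gives the $m\tau_e R_e^{-1}$ contribution.

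The main subtlety I anticipate is the careful bookkeeping when transferring integrals from $\Sigma$ to the approximating round sphere via Lemma \ref{muldelel}: the integrands carry rapidly decaying weights of order $r^{-6}$, so the transfer errors (involving $|\psi - \operatorname{id}|_{L^2}$, $|\nu_e\circ\psi - \tilde\nu_e|_{L^2}$ and the conformal factor) have to be controlled precisely enough to be absorbed into the $|\Acirc|^2_{L^2}$, $\tau_e^2$, and $\eta R_e^{-1}$ pieces rather than swamping the intrinsic smallness $|a_e|^2 \le R_e^2\tau_e^2$ that drives the whole estimate. Young-type splittings trading $|\Acirc|_{L^2}$ against $R_e^{-1}$ will be the main mechanism for bringing the error terms into the claimed form.
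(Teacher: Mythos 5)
Your proposal is correct and follows essentially the same route as the paper: reduce to the exact Schwarzschild metric via Lemma \ref{identities}, use the explicit form (\ref{rc schwarzschild}) of $\operatorname{Rc}_S$, transfer the integral to the approximating round sphere of Lemma \ref{muldelel} (with the transfer error split by Young's inequality into $cR_e^{-5}+cR_e^{-3}|\Acirc|^2_{L^2}$), and exploit $\tilde\partial_r=\tilde r^{-1}(a_e+R_e\tilde\nu_e)$ so that the tangential factor equals $\tilde r^{-2}(|a_e|^2-(a_e\cdot\tilde\nu_e)^2)\leq c\tau_e^2$; your treatment of (\ref{h l2 est}) simply spells out what the paper calls a straightforward application of Lemmas \ref{muldelel} and \ref{ more comp}. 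One cosmetic slip: the tangential factor including the $\tilde r^{-2}$ is bounded by $c\tau_e^2$, not $cR_e^2\tau_e^2$, but your final power count against the weight $\tilde r^{-6}d\tilde\mu_e\sim R_e^{-4}$ lands on the correct $\tau_e^2R_e^{-4}$.
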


 The next lemma provides some basic control on the evolution of many geometric quantities. However, it will turn out later on that these estimates can be sharpened in a considerable way.
\begin{lem}
	Let $\Sigma_t$ be an admissible area preserving Willmore flow and let $R_0^{-1}<\epsilon$. The following estimates hold:
	\label{good estimates}
	\begin{align*}
	|\Acirc|^4_{L^\infty(\Sigma_t)} &\leq c|\Acirc|^2_{L^2(\Sigma_t)}(R_e^{-6} - \partial_t \mathcal{W}(\Sigma_t)),
	\\
	|\Acirc|^2_{L^2(\Sigma_t)}&\leq c R_e^{-2}-cR_e^{4} \partial_t\mathcal{W}(\Sigma_t),
	\\
	|\nabla H|_{L^2(\Sigma_t)}^4 &\leq c|\Acirc|_{L^2(\Sigma_t)}(R_e^{-6}-\partial_t \mathcal{W}(\Sigma_t)),
	\\|\Delta H|_{L^2(\Sigma_t)}^2 &\leq cR_e^{-6}-c \partial_t \mathcal{W}(\Sigma_t),
	\\|W|_{L^2(\Sigma_t)}^2 &\leq cR_e^{-6}-c \partial_t \mathcal{W}(\Sigma_t),
	\\ \lambda ^2 &\leq cR_e^{-6}-c\epsilon\partial_t \mathcal{W}(\Sigma_t),
	\\|\nabla^2 H|_{L^2(\Sigma_t)}^4 &\leq cR_e^{-6}-c \partial_t \mathcal{W}(\Sigma_t),
	\\
	|H-\overline{H}_S|^4_{L^\infty(\Sigma_t)} &\leq (|\Acirc|^2_{L^2(\Sigma_t)}+cR_e^{-2})(R_e^{-6}-\partial_t\mathcal{W}(\Sigma_t)).
	\end{align*}
\end{lem}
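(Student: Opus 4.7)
The plan is to derive the full list of estimates from three tools working in tandem: the dissipation identity $\int_{\Sigma_t}(W+\lambda H)^2 d\mu = -2\partial_t\mathcal{W}(\Sigma_t)$, the Lagrange-multiplier formula (\ref{lagrange}), and the integral curvature estimates of Section 3. I would first record the dissipation identity: under a normal deformation with speed $f$, the evolution equations (\ref{ev equations}) and integration by parts give $\partial_t \mathcal{W}(\Sigma_t) = -\tfrac12 \int fW\, d\mu$, and area preservation $\int fH\, d\mu = 0$ converts this into the stated form when $f = W + \lambda H$. Expanding once more and using $\int WH\, d\mu = -\lambda\int H^2\, d\mu$ (another consequence of area preservation) yields the central bridge
\[
|W|_{L^2(\Sigma_t)}^2 = -2\partial_t\mathcal{W}(\Sigma_t) + \lambda^2 |H|_{L^2(\Sigma_t)}^2.
\]

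The crux is then to bound $\lambda^2$ independently, because Cauchy--Schwarz applied to $\lambda\int H^2 = -\int WH$ just reproduces the dissipation inequality. I would estimate $|\lambda|$ starting from (\ref{lagrange}), using the uniform lower bound $|H|_{L^2}^2\geq c$ implied by (\ref{l2 curvature bound}), the pointwise decay $|\operatorname{Rc}|\leq cR_e^{-3}$, and the decomposition $H = \overline{H_S} + (H - \overline{H_S})$ to control $\int H^2|\Acirc|^2$ by $|\Acirc|_{L^2}^2$ times a power of $R_e^{-1}$. This reduces $|\lambda|$ to $|\nabla H|_{L^2}^2$ plus $R_e^{-3}$-sized remainders. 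Lemma \ref{section 4 lemma with assumptions} in turn converts $|\nabla H|_{L^2}^2$ to $-c\int HW + cR_e^{-3} + c\int|\Acirc|^4$, and substituting $W = f - \lambda H$ turns $-\int HW$ into $\lambda\int H^2$ (area preservation) plus a Cauchy--Schwarz term of size $c(-\partial_t\mathcal{W})^{1/2}$. Absorbing the $\int|\Acirc|^4$ contribution via Lemma \ref{linfty} and the smallness $|\Acirc|_{L^2}^2\leq\epsilon$, the resulting quadratic inequality in $|\lambda|$ closes and gives $\lambda^2 \leq cR_e^{-6} - c\epsilon\,\partial_t\mathcal{W}(\Sigma_t)$.

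With $\lambda^2$ and, via the bridge, $|W|_{L^2}^2$ under control, the remaining estimates follow by cascade. The bound on $|\Acirc|_{L^2}^2$ comes from Lemma \ref{improved BL2} and Lemma \ref{some estimates}, after absorbing the self-referential $cR_e^{-1}|\Acirc|_{L^2}^2$ term. The $|\nabla H|_{L^2}^4$ estimate is obtained by sharpening the previous $L^2$-bound with the newly-established smallness of $|\Acirc|_{L^2}$. For $|\Delta H|_{L^2}^2$ one expands $|\Delta H|^2 \leq c|W|^2 + cH^2|\operatorname{Rc}|^2 + cH^2|\Acirc|^4$ and uses Lemma \ref{linfty} on the last term. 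Lemma \ref{bochner} then delivers $|\nabla^2H|_{L^2}^4$, Lemma \ref{linfty} delivers $|\Acirc|_{L^\infty}^4$, and Lemma \ref{H linfty estimate} delivers $|H - \overline{H_S}|_{L^\infty}^4$. The main obstacle throughout is the bookkeeping in the $\lambda^2$ estimate: the smallness factor $\epsilon$ must appear on the $\partial_t\mathcal{W}$ side of the inequality rather than on the $R_e^{-6}$ side, since this is precisely what later allows $\lambda^2|H|_{L^2}^2$ to be absorbed into $-\partial_t\mathcal{W}$ in the bridge identity rather than producing a tautology.
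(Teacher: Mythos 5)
Your overall architecture is reasonable up to one step, and that step is exactly where the whole argument has to close: the independent bound on $\lambda^2$. You correctly observe that Cauchy--Schwarz applied to $\lambda\int H^2\,d\mu=-\int WH\,d\mu$ is tautological, but the route you propose instead is circular as well. Starting from (\ref{lagrange}) you reduce $|\lambda|$ to $c|\nabla H|_{L^2}^2$ plus remainders, then invoke Lemma \ref{section 4 lemma with assumptions} to write $|\nabla H|_{L^2}^2\leq -c\int HW\,d\mu+\dots$, and finally convert $-\int HW\,d\mu$ back into $\lambda\int H^2\,d\mu$ via area preservation. The net effect is an inequality of the form $|\lambda|\leq C|\lambda|+\dots$, where $C$ is the product of the absolute (not small) constants coming from (\ref{lagrange}), from Lemma \ref{section 4 lemma with assumptions} and from the bound on $\int H^2\,d\mu$; nothing forces $C<1$, so the quadratic inequality you claim ``closes'' does not close. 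The smallness parameter $\epsilon$ enters your chain only through the $\int|\Acirc|^4\,d\mu$ term, which is not where the obstruction sits.

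The paper escapes this by controlling $\int|\nabla H|^2\,d\mu$ with a \emph{small} constant in front of a top-order quantity rather than in front of $\lambda$: integrating by parts against the constant $\overline{H_S}$ gives $\int|\nabla H|^2\,d\mu=\int(\overline{H_S}-H)\Delta H\,d\mu\leq c\sqrt{\epsilon}\,|\Delta H|_{L^2(\Sigma_t)}$, using $|H-\overline{H_S}|_{L^2(\Sigma_t)}\leq c\sqrt{\epsilon}$ from (\ref{h l2 est}). Together with the $L^\infty$-bound on $\Acirc$ from Lemma \ref{linfty} this yields $\bigl(\int WH\,d\mu\bigr)^2\leq c\epsilon|\Delta H|^2_{L^2(\Sigma_t)}+c\epsilon|W|^2_{L^2(\Sigma_t)}+cR_e^{-6}$, and the three quantities $|\Delta H|^2_{L^2(\Sigma_t)}$, $|W|^2_{L^2(\Sigma_t)}$ and $(\int WH\,d\mu)^2$ (equivalently $\lambda^2$) are then closed \emph{simultaneously} through the dissipation identity and the elementary bound $|\Delta H|^2_{L^2(\Sigma_t)}\leq |W|^2_{L^2(\Sigma_t)}+c\epsilon|W|^2_{L^2(\Sigma_t)}+cR_e^{-6}$; the factor $\epsilon$ in front of the top-order terms is precisely what permits absorption. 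Only afterwards are $\lambda^2$, $|\Acirc|^4_{L^\infty(\Sigma_t)}$, and the remaining quantities read off. If you replace your appeal to Lemma \ref{section 4 lemma with assumptions} by this integration-by-parts estimate, your ``bound $\lambda$ first, then bridge to $|W|^2$'' ordering becomes the same coupled system and does close; as written, it does not. The rest of your cascade (Lemmas \ref{improved BL2} and \ref{some estimates} for $|\Acirc|^2_{L^2(\Sigma_t)}$, Lemma \ref{bochner} for $|\nabla^2H|_{L^2(\Sigma_t)}$, Lemma \ref{H linfty estimate} for $|H-\overline{H_S}|_{L^\infty(\Sigma_t)}$) matches the paper.
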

\begin{proof}
	Lemma \ref{linfty} implies that
	\begin{align}
	|\Acirc|_{L^\infty(\Sigma_t)}^4\leq c|\Acirc|_{L^2(\Sigma_t)}^2(|W|^2_{L^2(\Sigma_t)}+R_e^{-6})
	\leq c\epsilon(|W|^2_{L^2(\Sigma_t)}+R_e^{-6}).
	\label{binftyest}
	\end{align}
	Next, using (\ref{h l2 est}) and $R_0^{-1}\leq  \epsilon$ we obtain
	$$
	|H-\overline{ H_S}|_{L^2(\Sigma_t)}\leq c(\eta R_e^{-2}+\tau_em R_e^{-1}+c|\Acirc_e|_{L^2})\leq c\sqrt{\epsilon}.
	$$
Integrating by parts and using Young's inequality we conclude that
	\begin{align}
	\int_{\Sigma_t} |\nabla H|^2d\mu=\int_{\Sigma_t} (\overline{H}_S-H)\Delta Hd\mu\leq c\sqrt{\epsilon}|\Delta H|_{L^2(\Sigma_t)}. \label{grad h l2 est}
	\end{align}
	Now we use these estimates to obtain an estimate for $|\Delta H|_{L^2(\Sigma_t)}$. Recall that (c.f. (\ref{h2 growth}))
	\begin{align}
	2\partial_t \mathcal{W}(\Sigma_t)=-|W|^2_{L^2}(\Sigma_t)+\lambda \int_{\Sigma_t} HWd\mu=-|W|^2_{L^2(\Sigma_t)}+|H|_{L^2(\Sigma_t)}^{-2}\bigg(\int_{\Sigma_t} WHd\mu\bigg)^2.
	\label{delta h first}
	\end{align}
	Integrating by parts, using (\ref{grad h l2 est}), (\ref{l2 curvature bound}), $|\operatorname{Rc}|\leq cR_e^{-3}$ and (\ref{binftyest}), we obtain 
	\begin{align}
	\bigg(\int_{\Sigma_t} WHd\mu\bigg)^2=& \bigg(\int_{\Sigma_t} \bigg(|\nabla H|^2-\overline{Rc}(\nu,\nu)H^2-|\Acirc|^2H^2\bigg)d\mu \bigg)^2 \notag
	\\\leq& c\epsilon|\Delta H|^2_{L^2(\Sigma_t)}+c\epsilon|W|_{L^2(\Sigma_t)}^2	+cR_e^{-6}.
	\label{delta h second}
	\end{align}
	Again by (\ref{binftyest}), $|\operatorname{Rc}|\leq cR_e^{-3}$  and the definition of $W$ we find
	\begin{align}
	|W|_{L^2(\Sigma_t)}^2 \geq |\Delta H|^2_{L^2(\Sigma_t)} -cR_e^{-6}-c\epsilon|W|_{L^2(\Sigma_t)}^2 \label{delta h third}.
	\end{align}
Combining (\ref{delta h first}), (\ref{delta h second}) and (\ref{delta h third}) we find
	\begin{align}
	|\Delta H|^2_{L^2(\Sigma_t)}\leq -c\partial_t\mathcal{W}(\Sigma_t)+cR_e^{-6},
	\label{delta proforma}
	\end{align}
	provided $\epsilon$ is sufficiently small.
	Returning to (\ref{delta h first}) and (\ref{delta h second}) we conclude
	$$
	|W|_{L^2(\Sigma_t)}^2 \leq -c\partial_t\mathcal{W}(\Sigma_t)+cR_e^{-6}.
	$$
	This implies the third, fourth and fifth estimate. Returning to (\ref{binftyest}) then implies the first estimate. Now we can use (\ref{lagrange}), (\ref{l2 curvature bound}), $|\operatorname{Rc}|\leq cR_e^{-3}$ and the first and third estimate to conclude that
	$$
	\lambda^2 \leq c(|\nabla H|^4_{L^2}+|H|\Acirc||_{L^2}^4+\bigg(\int_\Sigma H^2\overline{\operatorname{Rc}}(\nu,\nu)d\mu\bigg)^2)\leq -c\epsilon\partial_t\mathcal{W}(\Sigma_t)+cR_e^{-6}.
	$$
	Next, in the situation of Lemma \ref{improved BL2} we apply (\ref{rct est}) and the estimate $$
	|\nabla H|^2_{L^2(\Sigma_t)}\leq |H-\overline{ H_S}|_{L^2(\Sigma_t)}|\Delta H|_{L^2(\Sigma_t)} \leq \kappa R_e^{-2} |H-\overline{ H_S}|^2_{L^2(\Sigma_t)}+C(\kappa)R_e^4|\Delta H|^2_{L^2(\Sigma_t)},
	$$ valid for any $\kappa>0$, together with (\ref{h l2 est}). Absorbing the $|\Acirc|_{L^2(\Sigma_t)}^2$ terms and using the estimate (\ref{delta proforma}) then implies the second estimate. The two missing estimates are now straight-forward consequences of Lemma \ref{bochner} and Lemma \ref{H linfty estimate}.

\end{proof}
As promised, we now prove the refined a-priori estimates.
\begin{lem}
	If $\epsilon, R_0^{-1}$ are chosen sufficiently small, then an admissible area preserving Willmore flow satisfies the improved estimates
	\label{finalest}
\begin{align*}
|\Delta H|_{L^2(\Sigma_t)}^2 &\leq -c\partial_t\mathcal{W}(\Sigma_t)+c(\tau_e^2+R_e^{-1})R_e^{-6},\\
|\nabla H|_{L^2(\Sigma_t)}^4 &\leq -c\epsilon\partial_t\mathcal{W}(\Sigma_t)+c(\tau_e^2+R_e^{-2}) R_e^{-8},
\\
 |\Acirc|_{L^2(\Sigma_t)}^2 &\leq-cR_e^{4}\partial_t\mathcal{W}(\Sigma_t)+ c(\tau_e^2+R_e^{-1})R_e^{-2},
\\
(\lambda-2mR_e^{-3})^2&\leq-c\epsilon\partial_t\mathcal{W}(\Sigma_t)+ c(\tau_e^2+R_e^{-1})R_e^{-6}.
\end{align*}
\end{lem}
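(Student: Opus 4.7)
The plan is to establish the four estimates together via bootstrap, leveraging the refined controls on $|H - \overline{H_S}|_{L^2}$ from $(\ref{h l2 est})$ and on $|\operatorname{Rc}(\nu,\cdot)^T|_{L^2}$ from $(\ref{rct est})$ in Lemma \ref{some estimates}, together with the improved $L^2$ bound on $\Acirc$ from Lemma \ref{improved BL2}. The guiding principle is that the blunt $cR_e^{-6}$ error in Lemma \ref{good estimates} gets refined by subtracting the leading Schwarzschild Ricci value $\lambda_0 := 2mR_e^{-3}$, which nearly equals $\lambda$ on a centred round sphere, and by quantifying the geometric deviations via $\tau_e^2 + R_e^{-1}$.

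First I would sharpen the relation between $|\nabla H|^2_{L^2}$ and $|\Delta H|_{L^2}$. Since $\int\Delta H\,d\mu = 0$, integration by parts gives $\int|\nabla H|^2\,d\mu = -\int(H - \overline{H_S})\Delta H\,d\mu$, and Cauchy--Schwarz combined with $(\ref{h l2 est})$ yields
\[
|\nabla H|^4_{L^2} \leq c(|\Acirc|^2_{L^2} + \tau_e^2 R_e^{-2} + R_e^{-4})|\Delta H|^2_{L^2}.
\]
Plugging this into Lemma \ref{improved BL2} together with $(\ref{rct est})$, absorbing the term $cR_e^{-1}|\Acirc|^2_{L^2}$ (permitted for $R_0$ large), and applying Young's inequality then produces the auxiliary bound
\[
|\Acirc|^2_{L^2} \leq cR_e^4|\Delta H|^2_{L^2} + c(\tau_e^2 + R_e^{-1})R_e^{-2},
\]
which will become the third claimed estimate once the $|\Delta H|^2$ bound is in hand.

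For the main $|\Delta H|^2_{L^2}$ estimate I would rewrite $\Delta H = f - Hp$, where $f := W + \lambda H$ is the flow speed and $p := \operatorname{Rc}(\nu,\nu) + |\Acirc|^2 + \lambda$. The area preservation identity $\int WH\,d\mu = -\lambda|H|^2_{L^2}$ combined with $|W|^2_{L^2} = -2\partial_t\mathcal{W} + \lambda^2|H|^2_{L^2}$ from $(\ref{h2 growth})$ yields the useful key relation $\int f^2\,d\mu = -2\partial_t\mathcal{W}$, hence
\[
|\Delta H|^2_{L^2} \leq -4\partial_t\mathcal{W} + 2\int H^2p^2\,d\mu.
\]
I would then decompose $p = (\lambda - \lambda_0) + (\operatorname{Rc}(\nu,\nu) + \lambda_0) + |\Acirc|^2$ and estimate the three contributions separately. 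The crucial input is the pointwise Ricci deviation bound $|\operatorname{Rc}(\nu,\nu) + \lambda_0| \leq c(\tau_e + R_e^{-1})R_e^{-3}$, established by transferring to the approximating sphere $S^2_{a_e}(R_e)$ through Lemma \ref{muldelel}, using the explicit formula $(\ref{rc schwarzschild})$ for $\operatorname{Rc}_S$, the identity $(\tilde\partial_r\cdot\tilde\nu_e)^2 - 1 = \tilde r^{-2}((a_e\cdot\tilde\nu_e)^2 - |a_e|^2)$ already exploited in the proof of Lemma \ref{some estimates} (producing the $\tau_e$ contribution), and the expansion $\phi^{-6}(R_e) - 1 = O(R_e^{-1})$ (producing the $R_e^{-1}$ contribution); this yields $\int H^2(\operatorname{Rc}(\nu,\nu)+\lambda_0)^2\,d\mu \leq c(\tau_e^2+R_e^{-1})R_e^{-6}$. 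The $|\Acirc|^4$ contribution is absorbed via Lemma \ref{linfty} together with the refined $L^2$ bound on $\Acirc$.

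The $\lambda$ estimate closes the loop: from $(\ref{lagrange})$,
\[
\lambda - \lambda_0 = |H|^{-2}_{L^2}\Bigl(|\nabla H|^2_{L^2} - \int H^2(\operatorname{Rc}(\nu,\nu)+\lambda_0)\,d\mu - \int H^2|\Acirc|^2\,d\mu\Bigr),
\]
and each right-hand side term is controlled by the preceding bounds. The resulting coupled system between $|\Delta H|^2_{L^2}$ and $(\lambda - \lambda_0)^2$ closes because cross terms carry small prefactors ($\epsilon$, $\tau_e$, or $R_e^{-1}$) that permit absorption into the left-hand side once $\epsilon$ and $R_0^{-1}$ are sufficiently small. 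The main obstacle is the pointwise control on $\operatorname{Rc}(\nu,\nu) + \lambda_0$ via the approximating sphere and formula $(\ref{rc schwarzschild})$; everything else reduces to careful applications of Young's and Cauchy--Schwarz inequalities in the spirit of the proof of Lemma \ref{good estimates}.
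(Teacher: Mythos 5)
Your proposal is correct in outline and rests on exactly the same ingredients as the paper's proof: the identity $|W+\lambda H|_{L^2}^2=-c\,\partial_t\mathcal{W}$ (the paper's (\ref{trick})), the approximation of $\operatorname{Rc}(\nu,\nu)$ by $-2mR_e^{-3}$ via the round sphere and (\ref{rc schwarzschild}), Lemma \ref{improved BL2} together with (\ref{rct est}) for the $\Acirc$ estimate, and (\ref{h l2 est}) for the interpolation $|\nabla H|^4_{L^2}\leq |H-\overline{H_S}|^2_{L^2}|\Delta H|^2_{L^2}$. The packaging differs: the paper expands $-2\partial_t\mathcal{W}=|W|^2_{L^2}-\lambda^2|H|^2_{L^2}$ into twelve terms and extracts the crucial cancellation explicitly, namely $-2mR_e^{-3}\langle \Delta H\,H,1\rangle-2mR_e^{-3}|H|^{-2}_{L^2}|\nabla H|^2_{L^2}\langle H^2,1\rangle=0$, whereas you build the same cancellation into the decomposition $p=(\operatorname{Rc}(\nu,\nu)+\lambda_0)+(\lambda-\lambda_0)+|\Acirc|^2$ of the zeroth-order coefficient in $\Delta H=f-Hp$. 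Your version is algebraically cleaner and, by passing to the crude bound $|\Delta H|^2_{L^2}\leq 2\int f^2+2\int H^2p^2$, it also dispenses with the Poincar\'e/variance argument the paper uses for the terms $II+VIII$ (a second moment about $-\lambda_0$ dominates the variance, and here it is itself of the right size). The order in which you close the coupled system ($\Acirc$ in terms of $\Delta H$ first, then $\Delta H$, then $\lambda-\lambda_0$) mirrors the paper's (\ref{bestest})--(\ref{verybestest}) loop.

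The one genuine imprecision is the claimed \emph{pointwise} bound $|\operatorname{Rc}(\nu,\nu)+\lambda_0|\leq c(\tau_e+R_e^{-1})R_e^{-3}$ on $\Sigma_t$. This holds pointwise only on the approximating sphere $S^2_{a_e}(R_e)$: Lemma \ref{muldelel} controls $\psi-\operatorname{id}$ in $L^\infty$, but the normal deviation $\nu_e-\tilde\nu_e$ is controlled only in $L^2$ by $cR_e|\Acirc_e|_{L^2}$, so after transferring to $\Sigma_t$ the correct statement is the integral bound
\begin{equation*}
\int_{\Sigma_t} H^2\bigl(\operatorname{Rc}(\nu,\nu)+2mR_e^{-3}\bigr)^2\,d\mu \leq c(\tau_e^2+R_e^{-1})R_e^{-6}+cR_e^{-6}|\Acirc|^2_{L^2(\Sigma_t)},
\end{equation*}
with an extra $|\Acirc|^2_{L^2}$ error that you must carry symbolically and only eliminate at the end by substituting the third claimed estimate (this is precisely the role of the $cR_e^{-5}|\Acirc|^2_{L^2}$ term in the paper's (\ref{verybestest})). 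Since that term feeds back as $-cR_e^{-2}\partial_t\mathcal{W}+c(\tau_e^2+R_e^{-1})R_e^{-8}$, it is harmless, so this is a fixable bookkeeping issue rather than a structural one. Two smaller points: for the cubic interaction you should bound $\int H^2|\Acirc|^4\leq |\Acirc|^4_{L^\infty}|H|^2_{L^2}$ and invoke Lemma \ref{good estimates} directly (avoiding fractional powers of $-\partial_t\mathcal{W}$ that are hard to absorb), and you should record the $c\eta R_e^{-4}$ errors from replacing $\operatorname{Rc}$, $\nu$, $d\mu$ by their Schwarzschild counterparts, which the paper tracks explicitly.
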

\begin{proof}
	First, let us recall that the asymptotic behaviour of the metric implies that $|\operatorname{Rc}|\leq c R_e^{-3}$ as well as $|\operatorname{Rc}-\operatorname{Rc}_S|\leq cR_e^{-4}$. Moreover, according to Lemma \ref{identities} there holds $|\nu-\nu_S|\leq cR_e^{-2}$ as well as $|d\mu-d\mu_S|\leq cR_e^{-4}$ while (\ref{l2 curvature bound}) states that $|H|_{L^2(\Sigma_t)}\leq c$. We will use these estimates at various points without explicitly stating them. There holds
	\begin{align}
	\notag-2\partial_t\mathcal{W}(\Sigma_t)=&|W|^2_{L^2(\Sigma_t)}-\lambda^2|H|^2_{L^2(\Sigma_t)} \\\notag
	=& |\Delta H|^2_{L^2(\Sigma_t)}+2|H\operatorname{Rc}(\nu,\nu)|^2_{L^2(\Sigma_t)}
		+2||\Acirc|^2H|^2_{L^2(\Sigma_t)}\\\notag&+2\langle \Delta H H,\operatorname{Rc}(\nu,\nu)\rangle_{L^2(\Sigma_t)}+2\langle \Delta H H,|\Acirc|^2\rangle_{L^2(\Sigma_t)}+2\langle H^2|\Acirc|^2,\operatorname{Rc}(\nu,\nu)\rangle_{L^2(\Sigma_t)} \\\notag
		&-|H|^{-2}_{L^2(\Sigma_t)}(|\nabla H|_{L^2(\Sigma_t)}^4+\langle H^2,\operatorname{Rc(\nu,\nu)}\rangle^2_{L^2(\Sigma_t)}+|H|\Acirc||_{L^2(\Sigma_t)}^4) \\\notag
		&+2|H|^{-2}_{L^2(\Sigma_t)}(|\nabla H|^2_{L^2(\Sigma_t)} \langle H^2, \operatorname{Rc}(\nu,\nu)\rangle_{L^2(\Sigma_t)}+|\nabla H|^2_{L^2(\Sigma_t)}|\Acirc H|_{L^2(\Sigma_t)}^2
		\\&-|\Acirc H|_{L^2(\Sigma_t)}^2\langle H^2, \operatorname{Rc}(\nu,\nu)\rangle_{L^2(\Sigma_t)}). \label{0eq}
	\end{align}
	We denote the 12 terms in the last equation by the Latin numbers $I-XII$. There holds
	\begin{align}
	|III+V+IX+XI|&\leq c |\Acirc|_{L^\infty}^4+c|\Acirc|_{L^\infty(\Sigma_t)}^2(|\Delta H|^2_{L^2(\Sigma_t)}+|\nabla H|^2_{L^2(\Sigma_t)}) \notag
	\\&\leq c|\Acirc|_{L^\infty(\Sigma_t)}^4+\frac{1}{8}|\Delta H|_{L^2(\Sigma_t)}^2+c|\nabla H|_{L^2(\Sigma_t)}^4 \notag 
	\\& \leq c|\Acirc|^2_{L^2(\Sigma_t)}(R_e^{-6}-\partial_t\mathcal{W}(\Sigma_t))+\frac{1}{8}|\Delta H|_{L^2(\Sigma_t)}^2+c|\nabla H|_{L^2(\Sigma_t)}^4,
	\label{1eq}
	\end{align}
	where we used Lemma \ref{good estimates} in the last inequality. We now focus on $IV$ and $X$. We replace  $\operatorname{Rc}$ by $\operatorname{Rc}_S$, $\nu$ by $\nu_S$ and $d\mu$ by $d\mu_S$. This results in error terms that can be estimated by
	\begin{align}
	c\eta R_e^{-4}(|\Delta H|_{L^2(\Sigma_t)}+|\nabla H|^2_{L^2(\Sigma_t)}) \leq c  R_e^{-8}+\frac18|\Delta H|_{L^2(\Sigma_t)}^2.
	\label{2eq}
	\end{align}
	In the last inequality, we used the crude estimate $|\nabla H|_{L^2(\Sigma_t)}^2\leq c|\Delta H|_{L^2(\Sigma_t)}$, see (\ref{grad h l2 est}). In order to estimate these terms further, we express them in terms of the approximate sphere $S$ from Lemma \ref{muldelel}. To this end, we denote the conformal parametrization $S\to\Sigma_t$ by $\psi$ and indicate the respective geometric quantities of $S$ by a tilde. According to Lemma \ref{muldelel} we have
	$|r^{-3}-R_e^{-3}|\leq R_e^{-3}(|\Acirc_e|_{L^2(\Sigma_t)}+\tau_e)$ as well as $|\partial_r-\tilde \partial_r|\leq |\Acirc_e|_{L^2(\Sigma_t)}$ while Lemma \ref{r compare} and Taylor's theorem imply that $|\phi^{-6}-1|\leq cR_e^{-1}$. Since $|\tilde \nu_e-\nu_e|\leq c|\nabla_e \psi- \tilde \nabla_e\operatorname{Id}|$ we find
	\begin{align*}
	|\operatorname{Rc}_S(\nu_S,\nu_S)d\mu_S-mR_e^{-3}(1-3(\tilde \partial_r\cdot \tilde \nu_e)^2)d \mu_e|\leq cR_e^{-3}(\tau_e+|\Acirc_e|_{L^2(\Sigma_t)})+cR_e^{-3}|\nabla_e \psi-\nabla_e \operatorname{id}|.
	\end{align*}
	Consequently, it follows from H\"older's inequality and Lemma \ref{muldelel} that replacing $\operatorname{Rc}_S(\nu_S,\nu_S)d\mu_S$ by $mR_e^{-3}(1-3(\tilde \partial_r\cdot \tilde \nu_e)^2)d\mu_e$ in $IV$ and $X$ results in error terms that can be estimated by 
	\begin{align}
	&cR_e^{-3}(|\Delta HH|_{L^2(\Sigma_t)}R_e|\Acirc_e|_{L^2(\Sigma_t)}+(|\Delta H|_{L^2(\Sigma_t)}+|\nabla H|^2_{L^2(\Sigma_t)})
	(\tau_e+|\Acirc_e|_{L^2(\Sigma_t)})\notag \\\notag &+|\nabla  H|^2_{L^2(\Sigma_t)}|H^2|_{L^2(\Sigma_t)}R_e|\Acirc_e|_{L^2(\Sigma_t)})
	\\ \leq &c\tau_e^2 R_e^{-6}+c|\Acirc_e|^2_{L^2(\Sigma_t)}R_e^{-6}+1/8 |\Delta H|^2_{L^2(\Sigma_t)}+c|\nabla H|^4_{L^2(\Sigma_t)}+cR_e^{-4}|\Acirc_e|^2_{L^2(\Sigma_t)}(R_e^{-2}+R_e^2|H|^4_{L^\infty})
	\notag \\ \leq &c\tau_e^2R_e^{-6}+c|\Acirc|^2_{L^2(\Sigma_t)} R_e^{-6}+cR_e^{-8}-c\epsilon \partial_t\mathcal{W}(\Sigma_t)+1/8 |\Delta H|^2_{L^2(\Sigma_t)}+c|\nabla H|^4_{L^2(\Sigma_t)}.
	\label{3eq}
	\end{align}
	In the first inequality we used Young's inequality, H\"older's inequality and the estimate $|H^2|^2_{L^2(\Sigma_t)}\leq cR_e^2|H|_{L^\infty(\Sigma_t)}^4$. In the second inequality we used the estimate $|H|^4_{L^\infty(\Sigma_t)}\leq |H-\overline{ H_S}|^4_{L^\infty(\Sigma_t)}+cR_e^{-4}$, Lemma \ref{good estimates} to estimate
	$$
	|H-\overline{ H_S}|^4_{L^\infty(\Sigma_t)}\leq cR_e^{-6}|\Acirc|^2_{L^2(\Sigma_t)}+cR_e^{-8}-c\epsilon\partial_t\mathcal{W}(\Sigma_t)	$$
	 as well as Lemma \ref{identities} to replace $|\Acirc_e|_{L^2(\Sigma_t)}$ by $|\Acirc|_{L^2(\Sigma_t)}$. 	 
	  Performing the same two procedures  with $VI$ yields an error term that can be estimated by
\begin{align}
&cR_e^{-2}|H^2\Acirc^2|_{L^2(\Sigma_t)}(|\Acirc|_{L^2(\Sigma_t)}+\tau_e+R_e^{-1})\notag \\\notag 
\leq & c(R_e^{-1}|H-\overline{ H_S}|^2_{L^\infty(\Sigma_t)}|\Acirc|^2_{L^\infty(\Sigma_t)}+R_e^{-3}|\Acirc|^2_{L^\infty(\Sigma_t)})
(|\Acirc|_{L^2(\Sigma_t)}+\tau_e+R_e^{-1}) \\ \notag 
\leq & c(|\Acirc|^4_{L^\infty(\Sigma_t)}+R_e^{-2}|H-\overline{ H_S}|^4_{L^\infty(\Sigma_t)})+cR_e^{-3}|\Acirc|^2_{L^\infty(\Sigma_t)}(|\Acirc|_{L^2(\Sigma_t)}+\tau_e+R_e^{-1})
\\\leq & cR_e^{-6}|\Acirc|_{L^2(\Sigma_t)}^2+cR_e^{-6}(\tau_e^2+R_e^{-2})-c\epsilon\partial_t\mathcal{W}(\Sigma_t).
\label{4eq}
\end{align}
In the first inequality we used Young's inequality as well as the estimate $|H|_{L^\infty(\Sigma_t)}^2\leq c |H-\overline{H_S}|_{L^\infty(\Sigma_t)}^2+cR_e^{-2}$. In the second inequality we used Young's inequality again and the fact that  $|\Acirc|_{L^2(\Sigma_t)}+\tau_e+R_e^{-1}$ is bounded. In the third inequality we used Young's inequality one more time and Lemma \ref{good estimates} to estimate $|\Acirc|^4_{L^\infty(\Sigma_t)}$ and $|H-\overline{ H_S}|^4_{L^\infty(\Sigma_t)}$. Performing this procedure on $XII$ yields a similar error term.
	On the other hand, there holds
$	\tilde \partial_r \cdot \tilde \nu_e = r^{-1}(R_e\tilde \nu_e +a_e)\cdot \tilde \nu_e.$
	Again, $\tilde r^{-1}$ can be replaced by $R_e^{-1}$ and since $\tau_e=|a_e|/R_e$, we find that 
	\begin{align}
		|mR_e^{-3}(1-3(\tilde \partial_r\cdot\tilde\nu_e)^2)+2mR_e^{-3}|\leq c{\tau_eR_e^{-3}}. 
	\label{5eq}
	\end{align}
	Integrating by parts, we find that $$-2mR_e^{-3}\langle \Delta H H,1\rangle_{L^2(\Sigma_t)}-2mR_e^{-3}|H|^{-2}_{L^2(\Sigma_t)}|\nabla H|^2_{L^2(\Sigma_t)}\langle H^2,1\rangle_{L^2(\Sigma_t)}=0.$$
	Combining this with (\ref{2eq}), (\ref{3eq}) and (\ref{5eq})  we obtain
\begin{align}
|IV+X|&\leq c\tau_eR_e^{-3}|\Delta H|_{L^2(\Sigma_t)}+c\tau_e^2R_e^{-6}+cR_e^{-8}+c|\Acirc|^2_{L^2(\Sigma_t)} R_e^{-6}-c\epsilon \partial_t\mathcal{W}(\Sigma_t)\notag\\\notag&+1/8 |\Delta H|^2_{L^2(\Sigma_t)}+c|\nabla H|^4_{L^2(\Sigma_t)}
\\&\leq c\tau_e^2R_e^{-6}+cR_e^{-8}+c|\Acirc|^2_{L^2(\Sigma_t)} R_e^{-6}-c\epsilon \partial_t\mathcal{W}(\Sigma_t)+1/4 |\Delta H|^2_{L^2(\Sigma_t)}+c|\nabla H|^4_{L^2(\Sigma_t)}.
\label{6eq}
\end{align}
In a similar way we can use (\ref{4eq}) to find
\begin{align}
|VI+XII|&\leq c\tau_e R_e^{-3} |H\Acirc|_{L^2(\Sigma_t)}^2+cR_e^{-6}|\Acirc|_{L^2(\Sigma_t)}^2+cR_e^{-6}(\tau_e^2+R_e^{-2})-c\epsilon\partial_t\mathcal{W}(\Sigma_t) \notag
\\ &\leq \label{7eq} cR_e^{-6}|\Acirc|_{L^2(\Sigma_t)}^2+cR_e^{-6}(\tau_e^2+R_e^{-2})-c\epsilon\partial_t\mathcal{W}(\Sigma_t).
\end{align}
Here, we estimated $\tau_eR_e^{-3}|H\Acirc|^2_{L^2(\Sigma_t)}\leq c\tau_e^2R_e^{-6}+c|\Acirc|^4_{L^\infty(\Sigma_t)}$ and then used Lemma \ref{good estimates}.
In $II$ and $VIII$ we first replace every $H$ by $\overline{ H_S}$. In light of Lemma \ref{good estimates}, the error can be estimated by
$$
c|H-\overline{ H_S}|_{L^\infty(\Sigma_t)}R_e^{-5} \leq
c|H-\overline{ H_S}|^4_{L^\infty(\Sigma_t)}R_e^{-2}+cR_e^{-8}\leq cR_e^{-6}|\Acirc|_{L^2(\Sigma_t)}^2+cR_e^{-8}-c\epsilon\partial_t\mathcal{W}(\Sigma_t).
$$
From this we find that
\begin{align}
&|II+VIII| \notag \\\leq& cR_e^{-2} \bigg|\int_{\Sigma_t}(\operatorname{Rc}(\nu,\nu))^2d\mu
-|\Sigma_t|^{-1}\bigg(\int_{\Sigma_t} \operatorname{Rc}(\nu,\nu)d\mu\bigg)^2
\bigg|+ c|\Acirc|^2_{L^2(\Sigma_t)}R_e^{-6}+cR_e^{-8}-c\epsilon\partial_t\mathcal{W}(\Sigma_t)
\notag\\\notag\leq& c|\nabla \operatorname{Rc}(\nu,\nu)|^2_{L^2(\Sigma_t)}+ c|\Acirc|^2_{L^2(\Sigma_t)}R_e^{-6}+c R_e^{-8}-c\epsilon\partial_t\mathcal{W}(\Sigma_t)
\\\notag \leq& c|\overline\nabla \operatorname{Rc}(\nu,\nu)|^2_{L^2(\Sigma_t)}+c|A|^2_{L^\infty(\Sigma_t)}|\operatorname{Rc}(\nu,\cdot)^T|_{L^2(\Sigma_t)}^2+ c|\Acirc|^2_{L^2(\Sigma_t)}R_e^{-6}+c R_e^{-8}-c\epsilon\partial_t\mathcal{W}(\Sigma_t) 
\\\leq& c|\Acirc|^2_{L^2(\Sigma_t)}R_e^{-5}+c\tau_e^2R_e^{-6}+c R_e^{-7}-c\epsilon\partial_t\mathcal{W}(\Sigma_t).
\label{8eq}
\end{align}
In the second inequality, we used the Poincare inequality with zero mean. In the second inequality we expressed $\nabla$ in terms of $\overline\nabla$ and $A$, see (\ref{ambient vs intrinsic derivative}), In the fourth inequality, we used (\ref{rct est}) and the estimate 
$|(\overline\nabla \operatorname{Rc})(\nu,\nu)|^2_{L^2(\Sigma_t)} \leq  c|\Acirc|^2_{L^2(\Sigma_t)} R_e^{-5}+c\tau_e^2R_e^{-6}+cR_e^{-7}$, which can be shown in the same fashion as (\ref{rct est}). Finally, we used $|A|^2_{L^\infty(\Sigma_t)}\leq 2|\Acirc|^2_{L^\infty(\Sigma_t)}+
2|H|^2_{L^\infty(\Sigma_t)}$ and estimated these terms in the usual way. Combining (\ref{0eq}),(\ref{1eq}),(\ref{6eq}),(\ref{7eq}) and (\ref{8eq}) we finally obtain 
\begin{align}
|\Delta H|_{L^2(\Sigma_t)}^2 \leq -c\partial_t\mathcal{W}(\Sigma_t)+ c|\nabla H|_{L^2(\Sigma_t)}^4+c(\tau^2_e+ R_e^{-1})R_e^{-6}+cR_e^{-5}|\Acirc|_{L^2(\Sigma_t)}^2.
\label{bestest}
\end{align}
In light of the inequality  \begin{align}|\nabla H|_{L^2(\Sigma_t)}^4 \leq |H-\overline {H_S}|_{L^2(\Sigma_t)}^2|\Delta H|_{L^2(\Sigma_t)}^2\leq c(\tau_e^2R_e^{-2}+|\Acirc|_{L^2(\Sigma_t)}^2+CR_e^{-4})|\Delta H|^2_{L^2(\Sigma_t)}, \label{9eq}\end{align}
which follows from (\ref{h l2 est}) and the divergence theorem,
we can eventually absorb the second term on the right hand side in (\ref{bestest}) to obtain
\begin{align}
|\Delta H|_{L^2(\Sigma_t)}^2 \leq -c\partial_t\mathcal{W}(\Sigma_t)+c(\tau^2_e+ R_e^{-1})R_e^{-6}+cR_e^{-5}|\Acirc|_{L^2(\Sigma_t)}^2.
\label{verybestest}
\end{align}
 Next, using Lemma \ref{improved BL2}, (\ref{rct est}) and  Young's inequality we find for any $\kappa>0$
\begin{align*}
|\Acirc|^2_{L^2(\Sigma_t)}\leq& c|\Acirc|^2_{L^2(\Sigma_t)}R_e^{-1}+c\tau_e^2 R_e^{-2}+cR_e^{-3}+\kappa |H-\overline{ H_S}|^2_{L^2(\Sigma_t)} \\&+C(\kappa)R_e^{4}|H-\overline{ H_S}|^{-2}_{L^2(\Sigma_t)}|\nabla H|^4_{L^2(\Sigma_t)}
\\ \leq& |\Acirc|^2_{L^2(\Sigma_t)}(\kappa+cR_e^{-1})+c\tau_e^2 R_e^{-2}+cR_e^{-3}-cR_e^{4}\partial_t\mathcal{W}.
\end{align*}
In the last step, we have used (\ref{h l2 est}), 
$|\nabla H|^4_{L^2(\Sigma_t)}\leq |H-\overline {H_S}|_{L^2(\Sigma_t)}^2|\Delta H|_{L^2(\Sigma_t)}^2$ and (\ref{verybestest}). Absorbing yields the claimed estimate for $|\Acirc|^2_{L^2(\Sigma_t)}$. Reinserting into (\ref{verybestest}) gives the claimed estimate for $|\Delta H|^2_{L^2(\Sigma_t)}$. This then implies
\begin{align*}
|\nabla H|_{L^2(\Sigma_t)}^4\leq &|H-\overline {H_S}|_{L^2(\Sigma_t)}^2|\Delta H|_{L^2(\Sigma_t)}^2\\
&\leq c(\tau_e^2R_e^{-2} +|\Acirc|^2_{L^2(\Sigma_t)}+cR_e^{-4})(-\partial_t\mathcal{W}(\Sigma_t)+\tau_e^2 R_e^{-6}+cR_e^{-7})\\
&\leq -c\epsilon \partial_t\mathcal{W}+c\tau_e^2 R_e^{-8}+cR_e^{-10},
\end{align*}
as claimed. Finally, we recall the definition of $\lambda$, see (\ref{lagrange}). We have 
$$
\lambda \leq c|\nabla H|_{L^2(\Sigma_t)}^2+c|\Acirc|_{L^\infty(\Sigma_t)}^2-|H|_{L^2(\Sigma_t)}^{-2}\int_{\Sigma_t} H^2 \operatorname{Rc(\nu,\nu)}d\mu.
$$
Using the same methods as before, the last term can be computed explicitly to give
$$
(\lambda-2mR_e^{-3})^2\leq c|H-\overline{ H_S}|^2_{L^\infty(\Sigma_t)}R_e^{-4}+c\tau_e^2R_e^{-6}+c R_e^{-8}+c|\Acirc|^2_{L^2(\Sigma_t)}R_e^{-4}+c|\Acirc|^4_{L^\infty(\Sigma_t)}+|\nabla H|^4_{L^2(\Sigma_t)}.
$$
The claim follows.
	\end{proof}

\section{The evolution of the barycenter}

In this section, we proof Theorem \ref{main theorem2} and Theorem \ref{main theorem1}. To this end, we derive a differential inequality for $\tau_g$. In the next Lemma, we show that the evolution of $\tau_g$ is linked to the translation sensitivity of $\mathcal{W}$ in terms of the Schwarzschild background. We note that the symbol $"\cdot"$ indicates the Euclidean inner product.
\begin{lem}
	Let $\Sigma_t$ be an admissible area preserving Willmore flow. Then the following holds
	\begin{align}
	\notag \partial_t \tau_g=&\frac{1}{|\Sigma_t|_gR_g(t)}b_g\cdot\bigg(\int_{\Sigma_t} \nu(\Delta H +H{\operatorname{Rc}}(\nu,\nu)+H|\Acirc|^2+\lambda H)d\mu\\&+\int_{\Sigma_t} (x-a_e)(H\Delta H+H^2{\operatorname{Rc}}(\nu,\nu)+H^2|\Acirc|^2+\lambda H^2)d\mu\bigg), \label{second line}
	\end{align}
	where $b_g=a_g/|a_g|$.
	Moreover, we have
	\begin{align}
	\partial_t \tau_g=&\frac{3-2mR_g^{-1}}{|\Sigma_t|_gR_g(t)}\bigg(b_g\cdot\int_{\Sigma_t} \nu_S(\Delta_S H_S \label{first aprox} +H_S{\operatorname{Rc_S}}(\nu_S,\nu_S)+H_S|\Acirc_S|^2+\lambda H_S)d\mu_S
	\\&+\mathcal{O}((\tau^2_e+\eta+R_e^{-1})R_e^{-3}-R_e^3\partial_t\mathcal{W}(\Sigma_t))
	\bigg). \notag
	\end{align}
	
\end{lem}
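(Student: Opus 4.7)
For the first identity, I would differentiate directly. By (\ref{area const}) the area $|\Sigma_t|_g$, and hence $R_g$, is constant along the flow, so $\partial_t \tau_g = R_g^{-1}\partial_t|a_g|$. Since $2|a_g|\,\partial_t|a_g| = 2a_g\cdot\partial_t a_g$, one has $\partial_t|a_g| = b_g\cdot\partial_t a_g$. Substituting $a_g = |\Sigma_t|_g^{-1}\int_{\Sigma_t} x\, d\mu_g$, treating $|\Sigma_t|_g$ as constant, and using the flow equation (\ref{flow equation}) together with $\partial_t d\mu_g = (W+\lambda H)H\, d\mu_g$ from (\ref{ev equations}), one obtains
\[
|\Sigma_t|_g R_g\,\partial_t\tau_g \;=\; b_g\cdot\int_{\Sigma_t}\bigl(\nu + xH\bigr)(W+\lambda H)\,d\mu_g.
\]
Area preservation is equivalent to $\int_{\Sigma_t}(W+\lambda H)H\,d\mu_g = 0$, so shifting $x \mapsto x-a_e$ in the second term changes nothing, which gives (\ref{second line}).

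For the asymptotic identity, the plan is to exploit an exact algebraic identity on centred round Schwarzschild spheres and then to quantify the deviation of $\Sigma_t$ from such a sphere. On the centred Euclidean sphere of radius $R_e$ in $(M_S, g_S)$, using $x = R_e\nu_e$, $\nu_S = \phi^{-2}\nu_e$, $H_S = 2R_e^{-1}\phi^{-2}(R_e) - 2mR_e^{-2}\phi^{-3}(R_e)$ and $d\mu_S = \phi^4 R_e^2\,d\sigma_e$, a direct computation gives
\[
\bigl(\nu_S + (x-a_e)H_S\bigr)\,d\mu_S \;=\; \bigl(3 - 2mR_e^{-1}\phi^{-1}(R_e)\bigr)\,\nu_S\,d\mu_S,
\]
and Lemma \ref{ more comp} combined with a Taylor expansion of $\phi$ reconciles the scalar coefficient with $3 - 2mR_g^{-1}$ up to admissible errors. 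The plan then unfolds in three steps: (i) use Lemma \ref{muldelel} to parametrise $\Sigma_t$ as a $W^{2,2}$-perturbation of an approximating round Euclidean sphere, so that pointwise deviations of $\nu_e$, $H_e$ and $x$ from their round counterparts are controlled by $|\Acirc_e|_{L^2(\Sigma_t)}$; (ii) apply Lemma \ref{identities} to convert $\nu$, $H$, $d\mu$ and the operator entering $W$ from the $g$-quantities to their $g_S$-counterparts, producing $\eta$-weighted error terms; (iii) combine the algebraic identity with the conversions in (i)--(ii) to extract the leading factor $3 - 2mR_g^{-1}$ and identify the remaining integral with the Schwarzschild integral on the right-hand side of (\ref{first aprox}).

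The main obstacle is organising all residuals into the prescribed envelope $\mathcal{O}((\tau_e^2 + \eta + R_e^{-1})R_e^{-3} - R_e^3\partial_t\mathcal{W}(\Sigma_t))$. Each deviation produces an $L^2$-pairing of some $r^{-k}$ weight against one of $|\Acirc|$, $|\nabla H|$, $|\Delta H|$ or $|H-\overline{H_S}|$, and the sharp bounds in Lemma \ref{finalest} (together with Lemma \ref{good estimates}) are tailored precisely so that after Hölder and Young each contribution fits into this envelope. For instance, the $\tau_e^2$ component of the error emerges both from the contribution of $|\operatorname{Rc}(\nu,\cdot)^T|_{L^2}^2$, which by (\ref{rct est}) scales like $\tau_e^2 R_e^{-6}$, and from the $\phi^{-1}(R_e)$-versus-$R_g^{-1}$ discrepancy in the leading factor. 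A more delicate point is the $\lambda H$ contribution: it appears symmetrically on both sides, and its cross-metric error is controlled via Lemma \ref{identities} together with the sharp estimate for $(\lambda - 2mR_e^{-3})^2$ from Lemma \ref{finalest}, ensuring that converting $(W+\lambda H)\,d\mu$ to $(W_S+\lambda H_S)\,d\mu_S$ only introduces admissible errors.
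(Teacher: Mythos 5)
Your proposal follows essentially the same route as the paper: the first identity is the same direct differentiation using area preservation and the zero mean of $H(W+\lambda H)$, and for the second identity the paper likewise replaces $(x-a_e)H$ by $2(1-m/R_g)\nu$ via the de Lellis--M\"uller approximating sphere (your round-sphere identity), converts to Schwarzschild quantities with Lemma \ref{identities}, and absorbs the residuals using Lemmas \ref{good estimates} and \ref{finalest}. The one mechanism you should make explicit is the exact identity $|W+\lambda H|^2_{L^2(\Sigma_t)}=-\partial_t\mathcal{W}(\Sigma_t)$, which is what allows the deviation-times-speed error terms (e.g.\ $c\tau_e|W+\lambda H|_{L^2(\Sigma_t)}$) to be absorbed into $-R_e^{3}\partial_t\mathcal{W}(\Sigma_t)$ after Young's inequality without leaving a leftover of order $R_e^{-3}$, which the individual bounds of Lemma \ref{finalest} alone would not prevent.
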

\begin{proof}
The first identity is a straight forward computation using the flow equation (\ref{flow equation}) and the fact that $HW+\lambda H^2$ has zero mean. We first show that the second line of (\ref{second line}) is $2(1-m/R_g)$ times  the first line of (\ref{second line}) up to an error term by replacing $(x-a_e)H$ by $(2(1-m/R_g))\nu$. To this end, we first replace $H$ by $\overline{ H_S}$ in the second line of (\ref{second line}) which according to (\ref{h l2 est}) yields an error term that can be estimated by
\begin{align}
cR_e(|\Acirc|_{L^2(\Sigma_t)}+\tau_eR_e^{-1}+\eta R_e^{-2})|W+\lambda H|_{L^2(\Sigma_t)}.
\label{eq1}
\end{align}
Similarly, Lemma \ref{muldelel} and Lemma \ref{identities} imply that replacing $(x-a_e)$ by $(\tilde x-a_e)=R_e\tilde\nu_e$ and then $\tilde \nu_e$ by $\nu_e$ results in error terms that can be estimated by
\begin{align}
cR_e(|\Acirc|_{L^2(\Sigma_t)}+\eta R_e^{-2})|W+\lambda H|_{L^2(\Sigma_t)}.
\label{eq2}
\end{align}
Finally, replacing $R_e\overline{ H_S}=2\phi^{-2}(R_e)-2m R_e^{-1}\phi^{-3}(R_e)$ by $2\phi^{-2}(r)(1-m/R_g)$ and then $\nu_S=\phi^{-2}\nu_e$ by $\nu$ leads to an error that can be estimated by
\begin{align}
cR_e(|\Acirc|_{L^2(\Sigma_t)}+\tau_e R_e^{-1}+\nu R_e^{-2})|W+\lambda H|_{L^2(\Sigma_t)}.
\label{eq3}
\end{align}
Here, we also used Lemma \ref{ more comp}. We observe that
\begin{align}
|W+\lambda H|_{L^2(\Sigma_t)}^2=|W|_{L^2(\Sigma_t)}^2-\lambda \int_{\Sigma_t}WHd\mu=-\partial_t\mathcal{W}(\Sigma_t).
\label{trick}
\end{align}
Therefore, we obtain
\begin{align}
(c\tau_e+cR_e^{-1})|W+\lambda H|_{L^2(\Sigma_t)}\leq c(\tau_e^2+R_e^{-2})R_e^{-3}-cR_e^{3}\partial_t\mathcal{W}
\label{eq4}
\end{align}
Moreover, using Lemma \ref{finalest} and Young's inequality, we estimate
\begin{align}
cR_e|\Acirc|_{L^2(\Sigma_t)}|W+\lambda H|_{L^2(\Sigma_t)} \leq c(\tau_e^2+R_e^{-1})R_e^{-3}-cR_e^{3}\partial_t\mathcal{W}.
\label{eq5}
\end{align}
Combining (\ref{eq1})-(\ref{eq5}) shows that
 \begin{align*}
 \partial_t \tau_g=&\frac{3-2mR_g^{-1}}{|\Sigma_t|_gR_g(t)}\bigg(b_g\cdot\int_{\Sigma_t} \nu(\Delta H +H{\operatorname{Rc}}(\nu,\nu)+H|\Acirc|^2+\lambda H)d\mu
 \\&+\mathcal{O}(\tau^2_eR_e^{-3}+R_e^{-4}-R_e^3\partial_t\mathcal{W}(\Sigma_t))
 \bigg).
 \end{align*}
Next, we would like to express this quantity in terms of the Schwarzschild geometry. To this end, we will make implicit use of Lemma \ref{identities} and Lemma \ref{finalest}.
	Recalling  $|d\mu-d\mu_S|\leq c\eta R_e^{-2} d\mu $, $|\nu-\nu_S|\leq c\eta R_e^{-2}$ as well as $|H-H_S|\leq c\eta R_e^{-3}$ we find
\begin{align}
	\lambda \int_{\Sigma_t}\nu Hd\mu=\lambda \int_{\Sigma_t}\nu_S H_Sd\mu_S+\lambda\mathcal{O}(\eta R_e^{-1}).
	\label{err1a}
	\end{align}
	Using Lemma \ref{finalest}, this error can be estimated via 
	\begin{align}
	\eta\lambda R_e^{-1}\leq 2\eta^2 R_e^{-4}+2\lambda^2R_e^2 \leq cR_e^{-4}-c R_e^{2}\partial_t \mathcal{W}(\Sigma_t). \label{err1b}
	\end{align}
	Next, we have
	\begin{align*}
	\int_{\Sigma_t}\nu H|\Acirc|^2d\mu=\int_{\Sigma_t} \nu_S H_S|\Acirc|^2d\mu_S+\mathcal{O}(\eta|\Acirc|^2_{L^\infty(\Sigma_t)}R_e^{-2}).
	\end{align*}
	According to Lemma \ref{identities} there holds $||\Acirc|^2-|\Acirc_S|^2|\leq c\eta(|\Acirc|+R_e^{-3}+|A|R_e^{-2})(R_e^{-3}+|A|R_e^{-2})$. From this it follows that replacing $|\Acirc|^2$ by $|\Acirc_S|^2$ in the previous equation yields an error that can be estimated by
	\begin{align*}
	&c\eta \int_{\Sigma_t} |H_S|\bigg(|\Acirc|+R_e^{-3}+|A|R_e^{-2}\bigg)\bigg(R_e^{-3}+|A|R_e^{-2}\bigg)d\mu 
	\notag \\\leq&c\eta \notag (R_e^{-4}+|\Acirc|^2_{L^2(\Sigma_t)}R_e^{-2}+|\Acirc|_{L^\infty(\Sigma_t)}|H|_{L^\infty(\Sigma_t)}R_e^{-1}+|H|^2_{L^\infty(\Sigma_t)}R_e^{-3})
	\\\leq&c\eta R_e^{-3}-cR_e^2\partial_t\mathcal{W}(\Sigma_t) 
	\end{align*}
	In the first inequality we used the crude estimate $|H_S|\leq cR_e^{-1}+|H|$, applied Young's inequality several times and used (\ref{l2 curvature bound}). In the second inequality we used Young's inequality, Lemma \ref{good estimates}  as well as  Lemma \ref{finalest}. Hence, 
	\begin{align}
	\int_{\Sigma_t} \nu H|\Acirc|^2d\mu=\int_{\Sigma_t} \nu_S H_S|\Acirc_S|^2d\mu_S
	+\mathcal{O}(\eta (R_e^{-3}-R_e^{2}\partial_t\mathcal{W}(\Sigma_t))). \label{err2}
	\end{align}
	Using the asymptotic behaviour of ${\operatorname{Rc}},\nu,d\mu$ it is easy to see that 
	\begin{align}
	\int_{\Sigma_t} \nu H{\operatorname{Rc}}(\nu,\nu)d\mu
	= 
	\int_{\Sigma_t} \nu_S H_S{\operatorname{Rc}}_S(\nu_S,\nu_S)d\mu_S + \mathcal{O}(\eta R_e^{-3}).
	\label{err3}
	\end{align}
	Regarding the last term, we note
	$$
	\int \nu \Delta Hd\mu =\int \nu_S \Delta Hd\mu_S +\mathcal{O}(\eta R_e^{-3}+\eta R_e|\Delta H|^2_{L^2}).
	$$
	It is then straightforward to see that any smooth function $f$ satisfies the estimate $|\Delta_S f-\Delta f|\leq c\eta (|\nabla^2 f|R_e^{-2}+|\nabla f|R_e^{-3})$. Hence replacing $\Delta$ by $\Delta_S$ results in an error that can be estimated by
	\begin{align*}
	c\eta\int_{\Sigma_t} (R_e^{-2}|\nabla ^2H|+R_e^{-3}|\nabla H|)d\mu_S
	& \leq c\eta R_e^{-3}+c\eta R_e|\nabla^2 H|^2_{L^2(\Sigma_t)}+c\eta R_e^{-1}|\nabla H|^2_{L^2(\Sigma_t)}  \\ &\leq c\eta R_e^{-3}
	+c\eta R_e|\nabla ^2H|^2_{L^2(\Sigma_t)}+c\eta R_e |\nabla H|^4_{L^2(\Sigma_t)}
	\\ &\leq c\eta R_e^{-3}
	-c\eta R_e\partial_t\mathcal{W}(\Sigma_t),
	\end{align*}
	where we used Lemma \ref{good estimates} and Lemma \ref{finalest}. 	Integrating by parts we obtain
	$$
	\int_{\Sigma_t} \nu \Delta Hd\mu = -\int_{\Sigma_t} \nabla_S \nu_S\nabla_S Hd\mu_S+\mathcal{O}(\eta R_e^{-3}-c\eta R_e\partial_t\mathcal{W}(\Sigma_t)).
	$$
	There holds $\nabla _S \nu_S\leq cR_e^{-2}+|A_S|\leq cR_e^{-2}+c|A|$. Hence, using Lemma \ref{identities} to replace $\nabla_S H$ by $\nabla_S H_S$ gives an error that can be estimated by
	\begin{align*}
	c\eta \int_{\Sigma_t} (R_e^{-2}+|A|)(|\nabla A|R_e^{-2}+|A|R_e^{-3}+R_e^{-4}) &\leq c\eta R_e^{-3}+c\eta R_e^{-1}|\nabla A|^2_{L^2(\Sigma_t)}\\&\leq c\eta R_e^{-3}+\eta R_e |\nabla A|^4_{L^2(\Sigma_t)}.
	\end{align*}
	According to Lemma \ref{section 4 lemma with assumptions} and Lemma \ref{finalest} there holds 
	\begin{align}
	|\nabla A|^4_{L^2(\Sigma_t)}\leq c\lambda ^2+cR_e^{-6}+R_e^2|\Acirc|^4_{L^\infty(\Sigma_t)}\leq -cR_e^{2}\partial_t\mathcal{W}(\Sigma_t)+cR_e^{-6}. 
	\end{align}
	We conclude
	\begin{align}
	\int_{\Sigma_t} \nu \Delta Hd\mu =\int_{\Sigma_t} \nu_S \Delta_S H_Sd\mu_s +\mathcal{O}(\eta R_e^{-3}-\eta R_e^{3}\partial_t{\mathcal{W}}(\Sigma_t)).
	\label{err4}
	\end{align}
	The claim then follows from (\ref{err1a}), (\ref{err1b}), (\ref{err2}), (\ref{err3}) and (\ref{err4}).
\end{proof}
At this point, the central observation lies in the fact that the first three terms in (\ref{first aprox}) are a multiple of the variation of the Schwarzschild Willmore energy along a translation. Through approximation by a sphere, we can therefore explicitly compute $\partial_t \tau_g$ up to an error. We denote the geometric quantities of $\Sigma$ as usual, the geometric quantities of $S:=S_{a_e}(R_e)$ are denoted using a tilde.
\begin{lem}
	Let $\Sigma_t$ be an admissible area preserving Willmore flow. If $\delta$ is chosen sufficiently small (depending on $m$), then there holds 
	\begin{align*}
	\partial_t \tau_g \leq -160 m^2R_g^{-6}\tau_g+c(\tau_g^2+\epsilon+\eta)R_g^{-6}-c\partial_t\mathcal{W}.
	\end{align*}

\end{lem}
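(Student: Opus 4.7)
Starting from the second identity of the preceding lemma, the task reduces to evaluating the bracketed quantity
\[
J := b_g \cdot \int_{\Sigma_t} \nu_S\bigl(\Delta_S H_S + H_S \operatorname{Rc}_S(\nu_S,\nu_S) + H_S|\Acirc_S|^2 + \lambda H_S\bigr)\,d\mu_S.
\]
The first three terms inside the integral make up the Schwarzschild Willmore operator $W_S$ of $\Sigma_t$. Since an infinitesimal Euclidean translation by $b_g$ has normal Schwarzschild speed $\phi^{-2}(b_g\cdot\nu_e)=b_g\cdot\nu_S$, the quantity $b_g\cdot\int_{\Sigma_t}\nu_S W_S\,d\mu_S$ is precisely the derivative of the Schwarzschild Willmore energy along translation in the $b_g$ direction, plus an error arising from the fact that $\Sigma_t$ only approximately carries this geometry. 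My plan is to transfer the integral to the approximating Euclidean sphere $S=S_{a_e}(R_e)$, compute the translation derivative there explicitly, and then absorb all errors using the a-priori bounds of Lemma \ref{finalest}.

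The transfer step uses Lemma \ref{muldelel}: replacing $\Sigma_t$ by $S$ in each of the four integrals produces discrepancies involving $|\nabla_e\psi-\tilde\nabla_e\operatorname{id}|$, $|\tilde A-A|$, $|\tilde H - H|$ and similar, all bounded by $c|\Acirc_e|_{L^2(\Sigma_t)}$ or by $c|H-\overline{H}_S|_{L^2(\Sigma_t)}$. Combining with Lemma \ref{finalest} and the identity $|W+\lambda H|^2_{L^2(\Sigma_t)}=-\partial_t\mathcal{W}$ (as already exploited in the proof above), these errors aggregate to $c(\tau_e^2+\epsilon+\eta)R_e^{-3}-cR_e^3\partial_t\mathcal{W}$, which after division by $|\Sigma_t|_gR_g\sim R_g^3$ and multiplication by $3-2m/R_g$ match the target error term.

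On the exact Euclidean sphere $S$, parametrized by $x=a_e+R_e\omega$ with $\omega\in S^2$, I would expand the Schwarzschild quantities $H_S, \Acirc_S, \operatorname{Rc}_S(\nu_S,\nu_S), d\mu_S$ in powers of $\tau_e=|a_e|/R_e$, using (\ref{rc schwarzschild}) and the formulas of Lemma \ref{identities}. At $a_e=0$, the centred sphere is $SO(3)$-symmetric about the origin, so $b_g\cdot\int_S \nu_SW_S\,d\mu_S$ vanishes; the first-order Taylor coefficient in $a_e$ is computed by performing the standard integrals $\int_{S^2}\omega_i\omega_j\,d\omega=\tfrac{4\pi}{3}\delta_{ij}$ and $\int_{S^2}\omega_i\omega_j\omega_k\omega_\ell\,d\omega=\tfrac{4\pi}{15}(\delta_{ij}\delta_{k\ell}+\delta_{ik}\delta_{j\ell}+\delta_{i\ell}\delta_{jk})$. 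The Lagrange-multiplier contribution $\lambda b_g\cdot\int_S\nu_SH_S\,d\mu_S$ is handled similarly; here Lemma \ref{finalest} gives $\lambda=2mR_g^{-3}+O(\ldots)$, so the term is explicitly computable at leading order, with the error absorbable. Summing, the coefficient of $\tau_g$ works out (after accounting for the prefactor $(3-2m/R_g)/(|\Sigma_t|_gR_g)\sim 3/(4\pi R_g^3)$ and the factors of $\phi$) to produce exactly $-160\,m^2R_g^{-6}$.

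The main obstacle is the accuracy of this explicit computation: the cancellation between the Willmore gradient piece and the $\lambda H_S$ piece must be tracked carefully, because each of the four terms individually is of size $m R_g^{-5}$ but only their $\tau$-linear combination has a definite sign. Once the leading coefficient $-160m^2R_g^{-6}$ is extracted, the remaining contributions (quadratic terms in $\tau_e$, terms of size $\eta R_g^{-6}$ from the $g\mapsto g_S$ replacement, and terms of size $\epsilon R_g^{-6}$ from the sphere approximation) all fit into the error bound $c(\tau_g^2+\epsilon+\eta)R_g^{-6}-c\partial_t\mathcal{W}$; smallness of $\delta$ is only needed to ensure that the quadratic remainder $c\tau_g^2 R_g^{-6}$ does not swallow the linear bad-sign term $-160m^2R_g^{-6}\tau_g$, which is automatic once $\tau_g\leq\delta$ with $\delta\ll m^2$.
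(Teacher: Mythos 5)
Your overall strategy coincides with the paper's: recognize the first three terms as the translation derivative of the Schwarzschild Willmore energy, pass to the approximating sphere $S$, and compute there explicitly (the paper evaluates the spherical integral by an exact substitution $\theta\mapsto r$ and analyticity in $\tau_e$ rather than by moment integrals, which is immaterial). The genuine gap is in your transfer step. You assert that replacing $\Sigma_t$ by $S$ in each integral produces errors bounded by $c|\Acirc_e|_{L^2(\Sigma_t)}$ or $c|H-\overline{H_S}|_{L^2(\Sigma_t)}$ that ``aggregate to $c(\tau_e^2+\epsilon+\eta)R_e^{-3}-cR_e^3\partial_t\mathcal{W}$''. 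This fails for the contributions that are first order in $m$. Writing $\int H_S^2d\mu_S=\int H_e^2d\mu_e-4m\int H_er^{-2}\phi^{-1}(\partial_r\cdot\nu_e)d\mu_e+O(m^2)$, the translation derivative of the cross term is $-4m\int_{\Sigma_t}H_e\,\partial_s(r^{-2}\partial_r)|_{s=0}\cdot\nu_e\,d\mu_e$, an integral whose integrand has pointwise size $mR_e^{-3}|H_e|$ over a set of area $\sim R_e^2$. A Cauchy--Schwarz transfer to $S$ via Lemma \ref{muldelel} costs $cmR_e^{-2}|\Acirc_e|_{L^2(\Sigma_t)}$, which even with the refined bound of Lemma \ref{finalest} is only $\leq cm\tau_eR_e^{-3}+\dots$, i.e.\ a $\tau$-\emph{linear} error with an uncontrolled constant $c=c(m,\eta)$. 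Since the entire content of the lemma is the sign and size of the $\tau$-linear coefficient $-160m^2R_g^{-6}$, such an error can neither be absorbed into $c(\tau_g^2+\epsilon+\eta)R_g^{-6}$ nor dominated by the good term. The same problem occurs for $\lambda\int b_g\cdot\nu_SH_S\,d\mu_S$, whose Euclidean leading part is a priori of size $\lambda R_e\sim mR_e^{-2}$.

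The paper removes these dangerous terms by exact identities that hold on $\Sigma_t$ itself, not merely on the approximating sphere: the Pohozaev identity (\ref{pohozaev}) applied to the conformal Killing field $b_g$ gives $\int_{\Sigma_t}\operatorname{Rc}_S(\nu_S,b_g)\,d\mu_S=0$, which permits replacing $H_e$ by $H_e-\tilde H_e$ in the cross term, and the translation invariance of the Euclidean area and enclosed volume give $\int_{\Sigma_t}H_e\,b_g\cdot\nu_e\,d\mu_e=\int_{\Sigma_t}b_g\cdot\nu_e\,d\mu_e=0$. Only after these exact cancellations is what remains small enough to be handled by the crude transfer you describe, and the whole $\tau$-linear contribution then comes from the $m^2$ terms (together with the $\lambda$-term via $\lambda\approx 2mR_e^{-3}$), as your explicit spherical computation would confirm. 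Your proposal needs this ingredient; the ``main obstacle'' you single out (tracking the cancellation between the Willmore gradient piece and the $\lambda H_S$ piece on the sphere) is real but is not the binding difficulty.
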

\begin{proof}
	Let $\alpha:=\nu_S\cdot b_g$, where $b_g:=a_g/|a_g|$. Using (\ref{ev equations}), it it easy to see that the first three terms of (\ref{first aprox}) are four times the variation of the Willmore energy in the Schwarzschild space under a translation in direction $-b_g$. We therefore have
	\begin{align}
	b_g\cdot\int_{\Sigma_t} \nu_S(\Delta_S H_S +H_S{ \operatorname{Rc_S}}( \nu_S,\nu_S)+H_S|\Acirc_S|_S^2)d\mu_S=-\frac{d}{ds}\bigg|_{s=0}\int_{\Sigma_t} H_S^2d\mu_S.
	\label{trans0}
	\end{align}
	Here the variation is given by $s\mapsto F+sb_g$, where $F$ is the embedding of $\Sigma_t\to M$. With the help of Lemma \ref{identities} we compute
	$$
	\int_{\Sigma_t} H_S^2d\mu_S=\int_{\Sigma_t} H_e^2 d\mu_e-4m\int_{\Sigma_t}H_er^{-2}\phi^{-1}\partial_r\cdot \nu_ed\mu_e+4m^2\int r^{-4}\phi^{-2}(\partial_r\cdot \nu_e)^2d\mu_e.
	$$
	The first term on the right hand side is four times the Euclidean Willmore energy and therefore invariant under translations. In fact, as translations are isometries of $\mathbb{R}^3$, the quantities $H_e,d\mu_e,\nu_e$ are all invariant under translations. From this it is easy to see that 
	\begin{align}
	-\frac{d}{ds}\bigg|_{s=0}\int_{\Sigma_t} H_S^2 d\mu_S = -4m\int_{\Sigma_t} H_e \partial_s(r^{-2}\partial_r)|_{s=0}\cdot \nu_e d\mu_e+\mathcal{O}(R_e^{-3}).
	\label{trans1}
	\end{align}
	The terms summarized in $\mathcal{O}(R_e^{-3})$ can be expressed in terms of the approximating sphere using Lemma \ref{muldelel} and the fact that $|H_e|_{L^2(\Sigma_t)}^2$ is bounded which follows from Lemma \ref{identities} and (\ref{l2 curvature bound})\footnote{For instance, one has $|r^5-\tilde r^{5}|\leq c R_e^{-6}|r-\tilde r|\leq c R_e^{-5}|\Acirc_e|_{L^2(\Sigma_t)}$.}. Using Lemma \ref{identities} and Lemma \ref{finalest}, the resulting error can then be estimated by
	\begin{align}
	cR_e^{-3}|\Acirc_e|_{L^2(\Sigma_t)} \leq cR_e^{-4}-cR_e^{2}\partial_t\mathcal{W}(\Sigma_t).
	\label{trans2}
	\end{align}
	Consequently, we summarize such terms by the letter $\Gamma$ and focus on the only term which is of higher order and compute
	\begin{align}
	\notag&m\int_{\Sigma_t} H_e \partial_s(r^{-2}\partial_r)|_{s=0}\cdot \nu_e d\mu_e\\=\notag&m\int_{\Sigma_t} H_e r^{-3}(b_g-3\partial_r\cdot b_g\partial_r )\cdot \nu_e d\mu_e \\
	=&\notag\int_{\Sigma_t} H_e \operatorname{Rc}_S(\nu_S,b_g)d\mu_S+\Gamma \\
	=&\notag\int_{\Sigma_t} (H_e-\tilde H_e) \operatorname{Rc}_S(\nu_S,b_g)d\mu_S+\Gamma\\
	=\notag&m\int_{\Sigma_t} (H_e-\tilde H_e) r^{-3}(b_g\cdot \nu_e-3b_g\cdot \partial_r \partial_r\cdot \nu_e) d\mu_S+\Gamma
	\\
	=\notag&(-2mR_e^{-3})\int_{\Sigma_t} (H_e-\tilde H_e) \alpha d\mu_e+\mathcal{O}((\tau_e^2+R_e^{-1}+\eta)R_e^{-3}-R_e^{3}\partial_t\mathcal{W}(\Sigma_t))+\Gamma
	\\ \label{trans3}
	=&\mathcal{O}((\tau_e^2+R_e^{-1}+\eta)R_e^{-3}-R_e^{3}\partial_t\mathcal{W}(\Sigma_t))+\Gamma.
	\end{align} 
	In the second equation we used (\ref{rc schwarzschild}). In the third equation, we used the fact that $$\int_{\Sigma_t} \operatorname{Rc_S}(\nu_S,b_g)d\mu_S=0.$$ This follows from the so-called Pohozaev identity, see (5.12) in \cite{lamm2011foliations}: In Schwarzschild it states that for any vector field $X$ and any compact, null-homologous and smooth domain $\Omega$ the following relation holds
	\begin{align}
	\frac12\int_{\Omega} g_S(\mathcal{D}X, \operatorname{Rc}_S)d\mu_S=\int_{\partial\Omega}\operatorname{Rc}_S(X,\nu_S)d\mu_S,
	\label{pohozaev}
	\end{align}
	where $\mathcal{D}$ denotes the conformal killing operator\footnote{There is an easy way to see this using the potential function $f=(2-\phi)/\phi$ and the fact that $g_S$ is static, compare  (\ref{static equation}). In fact, there holds $\operatorname{div}_S\operatorname{Rc}_S(X,\cdot)=\operatorname{div}_S f^{-1}\nabla_S^2 f(X,\cdot)=\frac12g_S(\mathcal{D}X,f^{-1}\nabla^2_S f)-f^{-1}\nabla_S^*\nabla_S^2 f(X)-f^{-2}\nabla^2_Sf(X,\nabla_S f)$. Commuting derivatives and using $\Delta_S f=0$ we find $-\nabla_S^*\nabla^2_S f(X)=\operatorname{Rc}_S(x,\nabla_Sf)$. Using the static equation one more time it follows that $\operatorname{div}_S\operatorname{Ric}_S(X,\cdot)=\frac12g_S(\mathcal{D}X,f^{-1}\nabla^2_S f)$ and (\ref{pohozaev}) follows from the divergence theorem.  }. One then picks $X=b_g$ which is a conformal killing vector field and $\Omega$ to be the region bounded by $\Sigma_t$ and a coordinate sphere with radius tending to infinity. In the fourth equation, we again used (\ref{rc schwarzschild}) again. Then, in the fifth equation, we used Lemma \ref{muldelel} to replace $r$ by $R_e$, $\partial_r$ by $\tilde \partial_r$ and then $\tilde \partial_r=r^{-1}(\tilde \nu_e R_e+a_e)$ by $\nu_e$, yielding errors that can, with the help of the estimate $|H_e-\tilde H_e|_{L^2(\Sigma_t)}\leq c|\Acirc|_{L^2(\Sigma_t)}$, be estimated by
	$$
	R_e^{-2}|\Acirc_e|_{L^2(\Sigma_t)}(|\Acirc_e|_{L^2(\Sigma_t)}+\tau_e) \leq R_e^{-1}|\Acirc|^2_{L^2(\Sigma_t)}+\eta R_e^{-3}+\tau_e^2 R_e^{-3}.
	$$
	  The final equality follows from the translation invariance of the Euclidean area and the translation invariance of the Euclidean volume, which imply, respectively, that
	  $$
	  \int_{\Sigma_t} H_e b_g\cdot \nu_e d\mu_e=\int_{\Sigma_t}b_g\cdot \nu_e d\mu_e=0.
	  $$
	  The term $\Gamma$ can then also be expressed in terms of the approximating sphere yielding as before error terms than can be estimated as in (\ref{trans2}).
	 On the other hand, we can also use the translation invariance of the Euclidean area to conclude
	\begin{align}
	\lambda \int_{\Sigma_t} b_g\cdot \nu_SH_Sd\mu_S&=\int_{\Sigma_t}b_g\cdot \nu_e H_ed\mu_e-2m\lambda\int_{\Sigma_t}\nu_e r^{-2}\phi^{-1}\partial_r\cdot \nu_ed\mu_e \notag
	\\&=-2m\lambda \int_{S_t} \tilde\nu_e \tilde r^{-2}(1-\frac{m}{2\tilde r}) \tilde \partial_r\cdot \tilde\nu_e   d\tilde\mu_e
	+\mathcal{O}(\lambda (|\Acirc|_{L^2(\Sigma_t)}+ R_e^{-2})). \label{trans4}
	\end{align}
	In the second line, we expanded $\phi^{-1}$ up to order $r^{-1}$ and used Lemma \ref{muldelel}. We then estimate using Lemma \ref{finalest}
	\begin{align}
	\lambda (|\Acirc|_{L^2(\Sigma_t)}+ R_e^{-2}) \leq \lambda^2 R_e^2 +|\Acirc|_{L^2(\Sigma_t)}^2R_e^{-2}+ R_e^{-4} \leq cR_e^{-4}-cR_e^2\partial_t\mathcal{W}. \label{trans5}
	\end{align}
	Combining (\ref{trans0})-(\ref{trans5}), we have shown that
	\begin{align}
	\partial_t \tau_g =&\notag \frac{(3-2mR_g^{-1})}{|\Sigma_t|R_g}\bigg(-\frac{d}{ds}\bigg|_{s=0}\int_{S_{R_e}(a_e+sb_g)} \tilde H_S ^2 d\tilde \mu_S-2m\lambda \int_{S_{R_e}(a_e)} \tilde\nu_e\cdot b_g \tilde r^{-2}(1-\frac{m}{2\tilde r}) \tilde \partial_r\cdot \tilde\nu_e   d\tilde\mu_e 
	\\&+\mathcal{O}((\tau_g^2+R_e^{-1}+\eta)R_e^{-3}-R_e^{3}\partial_t\mathcal{W}(\Sigma_t)) \bigg).
\label{first}
	\end{align}
	Due to the rotational symmetry of $g_S$, it is easy to see that the function  $f(a):=\int_{S_{R_e}{(a)}} H_S^2 d\mu_S$ only depends on $\tau_e$ and is in fact analytic in $\tau_e$. Moreover, there holds $d/ds|_{s=0} \tau_e =b_g\cdot b_e R_e^{-1}$ where $b_e=a_e/|a_e|$. It therefore suffices to compute $f(a_e)$ up to terms of order $R_e^{-2}$. Dropping the tilde notation and writing $S=S_{R_e}(a_e)$, we have
\begin{align*}
f(a_e)=\int_S H_e^2 -4m \int_S 2R_e^{-1} (r^{-2}-\frac{m}{2} r^{-3})\partial_r\cdot \nu_e d\mu_e +4m^2 \int_S r^{-4} (\partial_r\cdot \nu_e)^2d\mu_e+\mathcal{O}(R_e^{-3}).
\end{align*}
	After a rotation we may assume that $a_e=(0,0,z)$ with $z=|a_e|$ and thus $\tau_e=R_e/z$. We choose the parametrization
$$
(\theta,\varphi) \mapsto a_e+R_e(\sin\theta\sin\varphi,\sin\theta\cos\varphi,\cos\theta)
$$
and compute the quantities
$$
\nu_e=(\sin\theta\sin\varphi,\sin\theta\cos\varphi,\cos\theta),  \qquad r^2=R_e^2(1+\tau_e^2)+2R_e^2\tau_e\cos\theta.
$$
Furthermore, one can check that
$$\cos(\theta)=(r^2-R_e^2-\delta^2)/(2R_ez)=(r^2-R_e^2(1+\tau_e^2))/(2R_e^2\tau_e)$$
and consequently
$$\partial_r\cdot\nu_e =r^{-1}(R_e+z\cos\theta)=
(r^2+R_e^2(1-\tau_e^2))/(2rR_e).$$ Next, we have $d\mu_e=R_e^2d\varphi d\theta$ and since there is no $\varphi$ dependence, integration of $\varphi$ solely adds a factor of $2\pi$. Finally, we can substitute $\theta\to r$ where the area element transforms via
$$d\theta/dr=-r/(R_ez\sin\theta)=-r/(R_e^2\tau_e\sin\theta)$$ and the boundary data are mapped to $(R_e-z,R_e+z)=(R_e(1-\tau_e),R_e(1+\tau_e))$ in an orientation reversing way. 
This gives
\begin{align*}
f(a_e)=&16\pi-\frac{8\pi m}{R_e^2 \tau_e} \int_{R_e(1-\tau_e)}^{R_e(1+\tau_e)}(r^{-2}-\frac{m}{2}r^{-3})(r^2+R_e^2(1-\tau_e^2))dr
\\&+\frac{2\pi m^2}{R_e^2 \tau_e} \int_{R_e(1-\tau_e)}^{R_e(1+\tau_e)}r^{-5}(r^2+R_e^2(1-\tau_e^2))^2dr+\mathcal{O}(R_e^{-3})
\\&=16\pi -32\frac{\pi m}{R_e}+\frac{6\pi m^2}{R_e^2\tau_e}\log{\frac{1+\tau_e}{1-\tau_e}}+\frac{4\pi m^2}{R_e^2}\frac{5-3\tau_e^2}{(1-\tau_e^2)^2}+\mathcal{O}(R_e^{-3})
\\&=16\pi -32\frac{\pi m}{R_e}+32\frac{\pi m^2}{R_e^2}+\frac{32\pi m^2 \tau_e^2}{R_e^{2}}+\mathcal{O}(\tau_e^4R_e^{-2}+R_e^{-3}),
\end{align*}
where the last equality follows from Taylor's theorem, provided $\delta$ is chosen sufficiently small. Hence from the analyticity it follows that
\begin{align}
\frac{d}{ds} f(a_e)\bigg|_{s=0}= 64 \pi m^2 R_e^{-3}\tau_e b_g\cdot b_e+\mathcal{O}(\tau_e^2R_e^{-3}+R_e^{-4}) = 64 \pi m^2 R_e^{-3}\tau_e +\mathcal{O}(\tau_e^2R_e^{-3}+R_e^{-4}),
\label{second}
\end{align}
where we used that $b_g\cdot b_e=1+\mathcal{O}(R_e^{-1})$. On the other hand, it is easy to see that for any vector $b_e^\perp$ perpendicular to $b_e$
$$
\int_S \nu_e\cdot b_e^\perp r^{-2}(1-\frac{m}{2r})\partial_r\cdot \nu_e d\mu_e =0.
$$
In a similar fashion as above, we thus find
\begin{align}
\notag&-2m\lambda \int_S \nu_e\cdot b_g r^{-2}(1-m/2r^{-1})\partial_r\cdot \nu_e d\mu_e 
\\=\notag&-\frac{\pi m\lambda}{R^3\tau_e^3} b_e\cdot b_g \int_{R_e(1-\tau_e)}^{R_e(1+\tau_e)} (r^2-R_e^2(1-\tau_e^2)) r^{-2}(1-m/2 r^{-1})(r^2+R_e^2(1-\tau_e^2))dr
\\=\notag& \frac{16\pi m\lambda }{3}\tau_e +\mathcal{O}((\tau_e^2+R_e^{-1}) \lambda )
\\=& \frac{32\pi m^2\tau_e}{3R_e^3}+\mathcal{O}(\tau_e^2 R_e^{-3}+cR_e^{-4}-R_e^{3}\partial_t\mathcal{W}(\Sigma_t)),
\label{third}
\end{align}
where we used Lemma \ref{finalest} in the last inequality. Combining (\ref{first})-(\ref{third}) and replacing  $\tau_e$ by $\tau_g$  the claim follows as $3(64-32/3)= 160$.
\end{proof}
We are now ready to prove the main result:
\begin{thm}
	Let $(M,g)$ be $C^3-$close to Schwarzschild with decay coefficient $\eta>0$ and mass $m>0$ and let $\Sigma$ be an embedded sphere. There exist constants $\eta_0(m), \epsilon(m,\eta_0), \delta(m,\eta_0)>0$ and $R_0(m,\eta_0)>0$ such that if $\eta\leq\eta_0$, $r_{\min}\geq R_0$, $\tau_e <\delta/2$ and $|\Acirc|^2_{L^2(\Sigma)}<\epsilon$, then the area preserving Willmore  flow starting at $\Sigma$ exists for all times and converges smoothly to one of the leaves in the foliation $\{\Sigma_\lambda\}$. 
	\label{main result}
\end{thm}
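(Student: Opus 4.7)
The plan is to verify the hypotheses of Jachan's long time existence result (Theorem \ref{jachan existence}) by propagating admissibility for all times, then use the uniqueness part of Theorem \ref{lmstheorem} to identify the subsequential limit as a leaf of the foliation, and finally upgrade subsequential smooth convergence to full convergence. The whole scheme rests on the control of the centring parameter $\tau_g$ afforded by the differential inequality of the preceding lemma.

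I proceed by contradiction. Let $T^\ast$ be the first time at which admissibility in the sense of \eqref{hypothesis a} fails. By Lemma \ref{excess estimate}, neither $|\Acirc|^2_{L^2}$ nor $r_{\min}$ can cause the breakdown, since both self-improve along the flow; the only possibility is that $\tau_e(T^\ast) = \delta$, equivalently (Lemma \ref{ more comp}) that $\tau_g$ reaches a definite multiple of $\delta$. On $[0, T^\ast]$ admissibility holds and the preceding lemma yields
$$
\partial_t \tau_g \leq -160\, m^2 R^{-6}\, \tau_g + c(\tau_g^2 + \epsilon + \eta) R^{-6} - c\, \partial_t \mathcal{W},
$$
with $R = R_g(t)$ constant by area preservation. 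Choosing $\delta$ small enough that $c\delta \leq 80\, m^2$, the quadratic term is absorbed into half of the linear decay term. Setting $\kappa := 80\, m^2 R^{-6}$ and applying the integrating factor $e^{\kappa t}$ gives
$$
\tau_g(T) \leq e^{-\kappa T} \tau_g(0) + \frac{c(\epsilon + \eta)}{80\, m^2} + c \int_0^T e^{-\kappa(T-s)} (-\partial_s \mathcal{W}(s))\, ds.
$$
The key point is that the Willmore integral is bounded by the total dissipation $\mathcal{W}(0) - \mathcal{W}(T)$, which is at most $\epsilon$ by Lemma \ref{excess estimate}. Combined with $\tau_g(0) \leq \delta/2 + o(1)$ (Lemma \ref{ more comp}), this yields $\tau_g(T) < \delta$ strictly for every $T \leq T^\ast$, provided $\epsilon$ and $\eta$ are chosen small enough in terms of $m$ and $\delta$. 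This contradicts $\tau_g(T^\ast) = \delta$ and shows admissibility persists for all times.

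With global admissibility in hand, both hypotheses of Theorem \ref{jachan existence} are satisfied: the Gauss equation \eqref{gausseqn} and Lemma \ref{excess estimate} force $\mathcal{W}(t) \leq 8\pi - \rho$ for some $\rho > 0$ depending on $m, \eta_0$, while $r_{\min}(t) \geq 2R_0$ throughout by Lemma \ref{excess estimate}. Hence the flow exists globally and subsequentially converges smoothly to a surface $\Sigma_\infty$ of Willmore type, which by continuity inherits the admissibility bounds (mean-convexity follows since $H$ is $C^0$-close to $\overline{H_S}>0$). The uniqueness clause of Theorem \ref{lmstheorem} then identifies $\Sigma_\infty = \Sigma_{\lambda_\infty}$ for some $\lambda_\infty \in (0, \lambda_0)$. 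Since distinct leaves have distinct areas and the area is conserved along the flow, $\lambda_\infty$ is uniquely determined by $|\Sigma_0|$; thus every convergent subsequence produces the same limit, so the full flow converges smoothly.

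The main obstacle is the Gronwall step. A naive bound on the Willmore integral would only give an additive constant of order $\mathcal{W}(0)$, which would destroy the smallness of $\tau_g(T)$. What rescues the argument is the total-dissipation bound $\mathcal{W}(0) - \mathcal{W}(t) \leq \epsilon$ from Lemma \ref{excess estimate}, which controls $\int_0^T e^{-\kappa(T-s)} (-\partial_s \mathcal{W})\, ds$ uniformly in $T$ even on the long relaxation timescale $1/\kappa \sim R^6/m^2$. Meanwhile, the decisive negative coefficient $-160\, m^2 R^{-6} \tau_g$ in the differential inequality itself encodes the nonlinear stability of the Schwarzschild foliation; its sign is extracted from the translation sensitivity of the Willmore energy on coordinate spheres in $g_S$ via Pohozaev's identity, combined with the sharp a priori estimates of Lemma \ref{finalest}.
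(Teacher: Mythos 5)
Your proposal is correct and follows essentially the same route as the paper: rule out breakdown of admissibility via Lemma \ref{excess estimate}, integrate the differential inequality for $\tau_g$ using the total-dissipation bound $\mathcal{W}(0)-\mathcal{W}(t)\leq\epsilon$ to keep the barycentre controlled, then invoke Theorem \ref{jachan existence} and the uniqueness clause of Theorem \ref{lmstheorem} to identify the limit and upgrade to full convergence. The only (cosmetic) difference is that you integrate the $\tau_g$-inequality with an integrating factor after absorbing the quadratic term, whereas the paper integrates directly and uses that the integrand $-2\tau_g+\delta$ is negative on the interval where $\tau_g$ exceeds $6\delta/10$; both yield the same contradiction.
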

\begin{proof} First we choose $\delta, \epsilon, R_0^{-1}$ small enough such that every admissible surface satisfies the constraints of the previous lemmas. According to Lemma \ref{excess estimate}, we  can choose suitable initial conditions such that the area-preserving Willmore flow can only cease to exist if $\tau_e$ reaches $\delta$. We assume that $\tau_e(\Sigma)=\delta/2$. We would then like to show that $\tau_e\leq \delta$ for all times. For this to hold, it is enough to show that $\tau_g \leq 9\delta /10 $ for all times if $R_0$ is chosen sufficiently large. We suppose for contradiction that there is a first time $T^*>0$ such that $\tau_g(T^*)=9\delta/10$. We may assume that $\tau_g(0) = 6\delta/10 $ and that $\tau_g(t)>6\delta/10$ for all $t>0$. Additionally, we require  $c(\delta^2+\epsilon+\eta) \leq 80m^2\delta$  as well as $c\epsilon<\delta/10$.	 The previous lemma implies that
	$$
		\partial_t \tau_g \leq -160 m^2R_g^{-6}\tau_g+80m^2\delta R_g^{-6}-c\partial_t\mathcal{W}
	$$ 
	for any time $t\in [0,T^*]$.  Hence, by integration and the excess estimate Lemma \ref{excess estimate} we infer that
	\begin{align}3\delta /10  = \tau_g(T^*)-\tau_g(0)\leq \delta/10  +80m^2R_g^{-6}\int_0^{T^*}(-2 \tau_g+\delta)<\delta/10, \label{contradiction} \end{align} 
	 which is of course a contradiction. Hence, no such time $T^*$ exists and Theorem \ref{jachan existence} gives long time existence and convergence of a subsequence to a surface $\Sigma^*$ of Willmore type satisfying (\ref{hypothesis a}). Applying   Lemma \ref{finalest} to the stationary surface $\Sigma^*$, it follows  that $\Sigma^*$ must be strictly mean convex. We may then further decrease $\delta$ such that the uniqueness statement of Theorem \ref{lmstheorem} can be applied and it follows that $\Sigma^*$ is part of the foliation constructed in \cite{lamm2011foliations}. From this, full convergence follows. 
	\end{proof}
Applying the previous result to a small $W^{2,2}$-perturbation of one of the leaves $\Sigma_\lambda$ we obtain the following:
\begin{coro}
	The leaves $\Sigma_\lambda$ are strict local area-preserving maxima of the Hawking mass with respect to the $W^{2,2}$-topology. More precisely, there exists $\tau_0,\Lambda>0$ depending only on $m,\eta$ such that for any $\lambda<\Lambda$ and any surface $\Sigma$ enclosing $\Sigma_\Lambda$ and satisfying $\tau_e\leq \delta$ as well as $|\Sigma|=|\Sigma_\lambda|$ there holds $m_H(\Sigma)\leq m_H(\Sigma_\lambda)$. Equality holds if and only if $\Sigma=\Sigma_\lambda$. Moreover, if $\operatorname{Sc}\geq 0$, then there holds $m_H(\Sigma)\leq m$ with equality if and only if $\Sigma=\Sigma_\lambda$ and the non-compact component of $M\setminus \Sigma_\lambda$ is isometric to the Schwarzschild manifold with mass $m$ and a solid, centred ball removed. In particular, $\Sigma_\lambda$ must be a centred sphere. 
	\label{main coro}
\end{coro}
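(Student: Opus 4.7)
The plan is to deduce Corollary \ref{main coro} directly from Theorem \ref{main result} by combining the monotonicity of the Hawking mass along the area preserving Willmore flow with the uniqueness statement in Theorem \ref{lmstheorem}. First I fix $\Lambda<\lambda_0$ small enough that every leaf $\Sigma_{\lambda}$ with $\lambda\leq\Lambda$ satisfies $r_{\min}(\Sigma_\lambda)\geq N R_0$ (with $N$ as in Lemma \ref{excess estimate}), and such that any surface $\Sigma$ enclosing $\Sigma_\Lambda$ that is $W^{2,2}$-close to a leaf $\Sigma_\lambda$ lies in the admissibility regime with $|\Acirc|^2_{L^2(\Sigma)}<\epsilon/2$ and $\tau_e<\delta/2$. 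Theorem \ref{main result} then applies: the area preserving Willmore flow starting from $\Sigma$ exists for all time and converges smoothly to some leaf $\Sigma_{\lambda'}$.

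Since areas are preserved along the flow by (\ref{area const}) and the foliation $\{\Sigma_\lambda\}$ is parameterized so that its areas depend strictly monotonically on $\lambda$, the identity $|\Sigma_{\lambda'}|=|\Sigma|=|\Sigma_\lambda|$ forces $\lambda'=\lambda$. The Hawking mass is non-decreasing along the flow by (\ref{h2 growth}), and therefore $m_H(\Sigma)\leq m_H(\Sigma_\lambda)$. If equality holds, then $\mathcal{W}$ is constant along the entire flow and the Cauchy-Schwarz step in (\ref{h2 growth}) must saturate at $t=0$, which forces $W+\lambda H\equiv 0$ pointwise. Equivalently, $\Sigma$ itself satisfies the Willmore-type equation (\ref{willmore type}). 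Lemma \ref{H linfty estimate} together with $\overline{H_S}>0$ keeps $\Sigma$ strictly mean convex, Lemma \ref{r compare} combined with $r_{\min}\gg 1$ yields $R_e\leq \chi r_{\min}^2$, and the smallness of $\tau_e$ places $\Sigma$ inside the regime of the uniqueness part of Theorem \ref{lmstheorem}. This forces $\Sigma$ to coincide with a leaf of the foliation, and area matching selects $\Sigma_\lambda$.

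For the final claim, under $\operatorname{Sc}\geq 0$ the Hawking mass is non-decreasing along the foliation and tends to the ADM mass $m$ as $\lambda'\to 0$, as established in \cite{lamm2011foliations}. Hence $m_H(\Sigma)\leq m_H(\Sigma_\lambda)\leq m$. In the equality case $m_H(\Sigma)=m$, both inequalities saturate: the first forces $\Sigma=\Sigma_\lambda$ by the preceding step, while the second forces the monotonicity formula for $m_H$ along $\{\Sigma_{\lambda'}\}_{\lambda'\leq\lambda}$ to collapse pointwise. The non-negative defect terms of that formula are built from the scalar curvature and a pinching quantity involving $|\Acirc|$ and the tangential part of $\operatorname{Rc}(\nu,\cdot)$, so their vanishing forces $\operatorname{Sc}\equiv 0$ and the leaves $\Sigma_{\lambda'}$ to be geodesic spheres with $\nu$ parallel to $\partial_r$; combined with the static structure (\ref{static equation}) this identifies the non-compact component of $M\setminus\Sigma_\lambda$ with a Schwarzschild exterior region. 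A final application of Theorem \ref{lmstheorem} inside the Schwarzschild model, or equivalently the rotational symmetry of the limiting metric, then identifies $\Sigma_\lambda$ as a centered sphere.

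The hard part is the rigidity step in the final paragraph: extracting the isometry with Schwarzschild from the collapse of the Hawking-mass monotonicity along the Willmore foliation. Handling this cleanly means revisiting the monotonicity computation of \cite{lamm2011foliations}, identifying its non-negative error terms explicitly, and invoking Obata-type rigidity anchored on the static equation (\ref{static equation}); the remaining steps are straightforward consequences of Theorem \ref{main result}, Theorem \ref{lmstheorem}, and the first-variation identities (\ref{h2 growth})--(\ref{area const}).
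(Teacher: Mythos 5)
There are two genuine gaps. First, the quantified statement of the corollary makes no smallness assumption on $|\Acirc|_{L^2(\Sigma)}$: the hypotheses are only that $\Sigma$ encloses $\Sigma_\Lambda$, that $\tau_e\leq\delta$, and that $|\Sigma|=|\Sigma_\lambda|$. Your argument applies Theorem \ref{main result} and therefore only covers surfaces that are already admissible, i.e.\ satisfy $|\Acirc|^2_{L^2(\Sigma)}<\epsilon$; you have silently restricted to surfaces ``$W^{2,2}$-close to a leaf.'' The paper's proof contains a dichotomy you are missing: if $|\Acirc|^2_{L^2(\Sigma)}\geq\epsilon$, the integrated Gauss equation (\ref{gausseqn}) together with the decay $|\operatorname{G}(\nu,\nu)|\leq cR_e^{-3}$ shows that $16\pi-\int_\Sigma H^2\,d\mu\leq -2\epsilon+cR_e^{-1}<0$ once $\Lambda$ is reduced (hence $R_0$ increased), so $m_H(\Sigma)<0\leq m_H(\Sigma_\lambda)$ and the inequality holds trivially in that regime. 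Without this case the ``more precisely'' part of the corollary is not established. (The parts you do cover --- flow to a leaf, identification of the limit by area matching, and the equality analysis via saturation of Cauchy--Schwarz in (\ref{h2 growth}) forcing $W+\lambda H\equiv 0$ --- match the paper's argument.)

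Second, in the $\operatorname{Sc}\geq 0$ case your route is genuinely different from the paper's and is not actually carried out. The paper obtains both $m_H(\Sigma_\lambda)\leq m$ and the rigidity by running the weak inverse mean curvature flow of Huisken--Ilmanen from $\Sigma_\lambda$ and quoting the equality statement of \cite{huisken2001inverse}, which directly yields that the exterior is a Schwarzschild exterior and $\Sigma_\lambda$ a centred sphere. You instead propose to use the monotonicity of $m_H$ along the foliation from \cite{lamm2011foliations} (which does give the inequality $m_H(\Sigma_\lambda)\leq m$) and then to extract rigidity from the vanishing of the defect terms of that monotonicity formula plus an ``Obata-type'' argument. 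That equality analysis is not available off the shelf: the monotonicity in \cite{lamm2011foliations} is proved, but its rigidity case --- in particular deducing a global isometry of the non-compact component with a Schwarzschild exterior from the pointwise vanishing of the defect along the leaves --- is a substantive claim that you only gesture at and explicitly defer as ``the hard part.'' As written, the final assertion of the corollary is therefore unproven in your argument; replacing this step with the Huisken--Ilmanen rigidity (or supplying the full equality analysis of the foliation monotonicity) is required.
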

\begin{proof}
Fix $\epsilon>0$ and $\Lambda>0$ sufficiently small such that the previous theorem can be applied with $R_0:=r_{\min}(\Sigma_\Lambda)$. Let $\delta=\delta(\epsilon,R_0,\eta,m)>0$ be the constant from the previous lemma and $\Sigma$ be a surface enclosing $\Sigma_\Lambda$. As $\Sigma_\Lambda$ is strictly mean convex and star-shaped (see \cite{lamm2011foliations}) it follows that $|\Sigma|=|\Sigma_\lambda|$ for some $\lambda<\Lambda$. If $|\Acirc|^2_{L^2(\Sigma)}\geq\epsilon$  it follows from the integrated Gauss equation (\ref{gausseqn}) that after possibly reducing $\Lambda$ (and thus increasing $R_0$) there holds $m_H(\Sigma)<0$. In the other case, we can apply the previous lemma and note that the area preserving Willmore flow increases the Hawking mass unless $\Sigma$ is a surface of Willmore-type to deduce that $m_H(\Sigma)\leq m_H(\Sigma_\lambda)$ with equality if and only if $\Sigma=\Sigma_\lambda$.   If the scalar curvature is non-negative, we can use the inverse mean curvature  flow (see \cite{huisken2001inverse}) starting at $\Sigma_\lambda$ to show that $m_H(\Sigma_\lambda)\leq m$. If equality holds, the rigidity statement in \cite{huisken2001inverse} readily implies that $\Sigma_\lambda$ must be a centred sphere in the Schwarzschild manifold. 
\end{proof}
\begin{rema}
It would of course be desirable to know if, and if so, in what sense, the leaves $\Sigma_\lambda$ are global maximizers of the Hawking mass. Even in the exact Schwarzschild space, this seems to be a difficult problem: Connecting several spheres close to the horizon by small catenoidal bridges, one can construct centred spherical surfaces with arbitrarily large Hawking mass which are additionally homologous to the horizon. Hence, one cannot expect any maximizing property without excluding a large compact set. On the other hand, a straight forward computation reveals that the Hawking mass of the sphere $S_R(R^\beta e_3)$ tends to $m$ as $R\to\infty$ for any $1/2<\beta<1$. Such surfaces eventually avoid any compact set and become totally off-centred. 
\end{rema}

\appendix
\section{Outline of the proof of Theorem \ref{jachan existence}}
For convenience we give an outline of Jachan's proof of Theorem \ref{jachan existence}. \\
\textbf{Notation and setting.} Let $T$ be a general tensor. We will use the star notation as well as the polynomial expressions
$$
P^k_j(T):=\sum\limits_{i_1+\dots+i_j=k}\nabla^{(i_1)}T*\dots*\nabla^{(i_k)}T, \qquad
P^{k,s}_j(T):={\sum\limits_{i_1+\dots+i_j= k, i_l\leq s}}\nabla^{(i_1)}T*\dots*\nabla^{(i_k)}T, 
$$
for $0\leq s\leq k$ and $l\in\{1,\dots,j\}$. We denote expressions of the form $T(P(\nu,\dots,\nu,\cdot,\dots,\cdot))^T$ by $T^{(\nu)}$. Here, $P$ is any permutation of the arguments. Examples of this notation are given by the Gauss equation
$\operatorname{Rm}^\Sigma=\operatorname{Rm}*1+P_2^0(A)$ and by Simon's identity $\Delta A=\nabla H+P^1_1 (\operatorname{Rc}^{(\nu)})+P^{0}_3(A)+A*\operatorname{Rc}$. The assumptions of Theorem \ref{jachan existence} and Simon's diameter estimate, Lemma 1 in \cite{simon1993existence}, imply that the flow stays in a compact region of the manifold where henceforth the ambient curvature and all its derivatives are uniformly bounded. For ease of notation, we will assume that all derivatives can be bounded by the same constant. \\
\textbf{Evolution equations.}
 As the flow is a fourth order parabolic equation, short time existence follows and we let $T>0$ be the maximal existence time of the flow. Following \cite{kuwert2002gradient}, in order to understand the properties of the flow, higher order estimates for the curvature are needed. This is done be studying the evolution of the second fundamental form. Using a rough version of Lemma \ref{der swap}, the Simon's identity, the Gauss equation, the standard formula for the change of the second fundamental form under a normal variation as well as the flow equation one obtains (c.f. Lemma 2.3 in \cite{kuwert2002gradient})
 \begin{lem}
 	Let $\Sigma_t$ be an area preserving Willmore flow. There holds
 	\begin{align*}
 	(\partial_t+\Delta^2)A=&P^2_3(A)+P^0_5(A)+\sum_{k+l=2} \nabla^l A * \nabla ^k \operatorname{Rc}^{(\nu)}+1*\nabla^3 \operatorname{Rc}^{(\nu)}+A*P^0_2\operatorname{Rc}^{(\nu)}+P^0_3 A*\operatorname{Rc}^{(\nu)}
 	\\&+\lambda*(\nabla^2 A+P^0_3(A)+A*\operatorname{Rc}^{(\nu)}).
 	\end{align*}
 \end{lem}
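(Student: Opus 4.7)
The plan is to compute $\partial_t A$ directly from the flow equation and then use Simon's identity to convert scalar derivatives of $H$ into the Laplacian of $A$, generating $\Delta^2 A$ on the left-hand side. Concretely, with $\partial_t F = f\nu$ and $f = W + \lambda H$, the standard formula for the variation of the second fundamental form under a normal variation gives schematically
\begin{equation*}
\partial_t A = -\nabla^2 f + f\, A * A + f\, \operatorname{Rm}^{(\nu)}.
\end{equation*}
I will split this according to $f = W + \lambda H$ and treat the two contributions separately.

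For the $\lambda H$ piece, I would use Simon's identity $\Delta A = \nabla^2 H + P_1^1(\operatorname{Rc}^{(\nu)}) + P_3^0(A) + A * \operatorname{Rc}$ to replace $\nabla^2 H$ by $\Delta A$ up to the listed lower-order terms; combined with $\lambda H \, A * A = \lambda\, P_3^0(A)$ and $\lambda H\, \operatorname{Rm}^{(\nu)} = \lambda \, A * \operatorname{Rc}^{(\nu)}$, this yields exactly the bracketed $\lambda*(\nabla^2 A + P_3^0(A) + A*\operatorname{Rc}^{(\nu)})$ term. For the $W$ piece, expand $W = \Delta H + H\operatorname{Rc}(\nu,\nu) + H|\Acirc|^2$; the key is the term $-\nabla^2 \Delta H$. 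First I would commute derivatives via a rough version of Lemma \ref{der swap} to write $\nabla^2 \Delta H = \Delta \nabla^2 H + \operatorname{Rm}^\Sigma * \nabla^2 H + \nabla \operatorname{Rm}^\Sigma * \nabla H$, and then apply $\Delta$ to Simon's identity to obtain $\Delta \nabla^2 H = \Delta^2 A +$ (lower-order terms involving $\nabla^2$ of $P_3^0(A)$, of $P_1^1(\operatorname{Rc}^{(\nu)})$, and of $A * \operatorname{Rc}$). This produces the $\Delta^2 A$ on the left together with the $P_3^2(A)$ and $\sum_{k+l=2} \nabla^l A * \nabla^k \operatorname{Rc}^{(\nu)}$ and $1 * \nabla^3 \operatorname{Rc}^{(\nu)}$ contributions.

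To dispose of the remaining pieces, I would expand $-\nabla^2(H\operatorname{Rc}(\nu,\nu))$ and $-\nabla^2(H|\Acirc|^2)$ by Leibniz: the first distributes among $\nabla^k H \cdot \nabla^l \operatorname{Rc}^{(\nu)}$ with $k+l \leq 2$, absorbed into $\sum_{k+l=2}\nabla^l A * \nabla^k \operatorname{Rc}^{(\nu)}$ (using $\nabla H = \nabla^* \Acirc + \operatorname{Rm}*1$ to convert scalar gradients back into derivatives of $A$); the second expands into terms like $\nabla^2 H \cdot A * A$, $\nabla H \cdot A \cdot \nabla A$, $H(\nabla A)^2$, $H A \nabla^2 A$, each of which lives in $P_3^2(A)$ after one more application of Simon's identity to replace $\nabla^2 H$ by $\Delta A$. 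The remaining algebraic terms $W \, A * A$ and $W \, \operatorname{Rm}^{(\nu)}$ contribute $P_3^0(A) * \operatorname{Rc}^{(\nu)}$ and $A * P_2^0(\operatorname{Rc}^{(\nu)})$ together with higher-$A$ polynomials in $P_5^0(A)$, coming from the $H|\Acirc|^2$ factor inside $W$. The Gauss equation $\operatorname{Rm}^\Sigma = \operatorname{Rm} * 1 + P_2^0(A)$ is used throughout to turn intrinsic Riemann factors arising from derivative commutation into a combination of ambient curvature and polynomials in $A$.

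The main obstacle is purely bookkeeping: one must track carefully the curvature commutator terms generated each time $\nabla^2$ is interchanged with $\Delta$ or with $\operatorname{Rc}^{(\nu)}$, verify that every such term lands in one of the declared polynomial classes $P_j^{k,s}$, and check that the Leibniz expansions of $-\nabla^2(H \cdot \text{stuff})$ together with the Simon substitution do not produce any term with derivative order higher than $2$ on $A$ outside of $\Delta^2 A$ itself. Once the accounting is complete and every residual expression has been sorted into one of the listed $P_j^k$, $P_j^{k,s}$, or $\operatorname{Rc}^{(\nu)}$-bucket, the identity follows.
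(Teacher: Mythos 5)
Your plan is exactly the route the paper indicates for this lemma (it only cites the ingredients: the normal-variation formula for $A$, Simon's identity, the Gauss equation and the derivative-commutation lemma, following Lemma 2.3 of Kuwert--Sch\"atzle \cite{kuwert2002gradient}), and the bookkeeping you describe does place every resulting term into one of the declared classes. One small correction: for the $\lambda H$ contribution you should not invoke Simon's identity --- since $H=\operatorname{tr}A$ and $\nabla\gamma=0$, the term $-\lambda\nabla^{2}H$ is already of the form $\lambda*\nabla^{2}A$, whereas routing it through Simon's identity would leave behind an extra $\lambda*\nabla\operatorname{Rc}^{(\nu)}$ that is not covered by the stated bracket $\lambda*(\nabla^{2}A+P^{0}_{3}(A)+A*\operatorname{Rc}^{(\nu)})$. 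The same remark applies to your proposed use of $\nabla H=\nabla^{*}\Acirc+\operatorname{Rm}*1$: it is unnecessary, as $\nabla H$ is itself a contraction of $\nabla A$.
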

Using induction and Lemma \ref{der swap} again one also obtains higher order versions of this equation, see Proposition 2.4 in \cite{kuwert2002gradient}. 
\\ \textbf{Integral curvature estimates.}
The idea is now to get localized integral curvature estimates from the evolution equation. To this end, we choose a cut off function $\eta$ in $M$ which becomes a function on $\Sigma\times[0,T)$ after restriction. We then adapt Lemma 3.2 from \cite{kuwert2002gradient} to our setting where we have to keep track of additional terms arising from the ambient curvature.  This gives an estimate for the evolution of an integral of the form $\int_\Sigma \eta^s |T|^2 d\mu$ for a general tensor $T$. Inserting $T=\nabla^m A$
and estimating $|\nabla \eta|\leq \Lambda_1$ as well as $|\nabla^2 \eta| \leq \Lambda_2+|A|\Lambda_1$ gives the following crude estimate 
\begin{lem}
	Let $m\geq0$ and $s\geq 2m+4$. There exists a constant $c$ which depends on $s,m$ such that
	\begin{align*}
	&\partial_t \int_{\Sigma_t} \eta^s |\nabla^m A|^2 d\mu +\frac{3}{4}\int_{\Sigma_t} \eta^s |\nabla^{m+2}A|^2d\mu+\int_{\Sigma_t} \eta^{s-2}|\nabla \eta|^2 |\nabla^{m+1} A|^2 d\mu\\ 
	\leq& c\Lambda_1^2\int_{\Sigma_t} \eta^{s-2} |\nabla^{m+1} A|^2 d\mu + c(1+\Lambda_1^4+\Lambda_2^2)\int_{\Sigma_t} \eta^{s-4} |\nabla^m A|^2 d\mu +\lambda ^2\int_{\Sigma_t} \eta^s|\nabla^m A|^2 d\mu \\
	&+c\int_{\Sigma_t} \eta^s (P^{2m,m}_6(A)+\nabla^m A*[P^{m+2}_3(A)+\nabla^{m+3}\operatorname{Rc}^{(\nu)}+\sum_{k+l=m+2}\nabla^k A*\nabla^l \operatorname{Rc}^{(\nu)}])d\mu 
	\\&+\int_{\Sigma_t} \eta^s\nabla^mA*\sum_{k+l=m}[P^k_3(A)*\nabla^l\operatorname{Rc}^{\nu}+\nabla^k(A)*P^l_2\operatorname{Rc}^{(\nu)}]d\mu
	\\&+\int_{\Sigma_t}\eta^s*\lambda*\nabla^m(A)*\sum_{k+l=m} \nabla^kA*\nabla^l \operatorname{Rc}^{(\nu)}d\mu.
	\end{align*}
\end{lem}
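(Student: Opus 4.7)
The plan is to adapt Lemma~3.2 of \cite{kuwert2002gradient} to the asymptotically Schwarzschild setting, keeping careful track of all source terms generated by the ambient curvature $\operatorname{Rc}^{(\nu)}$ and by the Lagrange multiplier $\lambda$. First, I induct on the base-case identity just above this lemma, iterating Lemma \ref{der swap} to commute $\nabla$ past $\Delta$ and past $\partial_t$, to obtain a schematic evolution equation $(\partial_t + \Delta^2)\nabla^m A = \Phi_m$, where $\Phi_m$ is a sum of intrinsic polynomials in $A$ of type $P_3^{m+2}(A)$ and $P_{2m+5}^0(A)$, mixed terms of type $\nabla^k A * \nabla^\ell \operatorname{Rc}^{(\nu)}$ with $k+\ell \leq m+2$, an isolated $\nabla^{m+3}\operatorname{Rc}^{(\nu)}$, and a $\lambda$-block consisting of $\lambda \nabla^{m+2} A$ and $\lambda(P_3^m(A) + \nabla^k A * P_2^\ell \operatorname{Rc}^{(\nu)})$-type pieces. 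I then differentiate $\int_{\Sigma_t} \eta^s |\nabla^m A|^2 d\mu$ using $\frac{d}{dt}d\mu = fH\, d\mu$ with $f = W+\lambda H$, and the chain rule $\partial_t(\eta\circ F_t) = f\langle\overline{\nabla}\eta,\nu\rangle$, which is bounded in modulus by $|f|\Lambda_1$.

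The principal contribution is $-2\int_{\Sigma_t} \eta^s \nabla^m A \cdot \Delta^2 \nabla^m A\, d\mu$. I integrate by parts twice, producing $-\int_{\Sigma_t} \eta^s |\nabla^{m+2} A|^2 d\mu$ together with (i) Bochner-type commutator terms in which $\operatorname{Rm}^\Sigma = \operatorname{Rm}*1 + P_2^0(A)$ appears after commuting $\Delta$ past $\nabla$ on a tensor, and (ii) residual terms where derivatives land on $\eta^s$. For these latter, the inputs $|\nabla\eta|\leq \Lambda_1$ and $|\nabla^2\eta|\leq \Lambda_2 + \Lambda_1|A|$, combined with several applications of Young's inequality, produce precisely the $c\Lambda_1^2 \int \eta^{s-2}|\nabla^{m+1}A|^2 d\mu$ and $c(1+\Lambda_1^4+\Lambda_2^2)\int \eta^{s-4}|\nabla^m A|^2 d\mu$ error terms on the right. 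Small fractions of $\int \eta^s|\nabla^{m+2}A|^2 d\mu$ and of $\int \eta^{s-2}|\nabla \eta|^2 |\nabla^{m+1}A|^2 d\mu$ get absorbed from right to left in the process, which explains both why those two integrals appear on the left-hand side and why the coefficient in front of $|\nabla^{m+2}A|^2$ degrades from $1$ to $3/4$.

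Each of the remaining pieces of $\Phi_m$ is then paired with $\eta^s \nabla^m A$ and integrated. The intrinsic polynomials directly produce the $P_6^{2m,m}(A)$ and $\nabla^m A * P_3^{m+2}(A)$ contributions. The ambient-curvature sources give the $\nabla^{m+3}\operatorname{Rc}^{(\nu)}$ term and the $\sum_{k+\ell = m+2}\nabla^k A * \nabla^\ell \operatorname{Rc}^{(\nu)}$ term, together with the lower-order $P_3^k(A) * \nabla^\ell \operatorname{Rc}^{(\nu)}$ and $\nabla^k A * P_2^\ell \operatorname{Rc}^{(\nu)}$ contributions; the latter two arise because at each induction step, commuting a surface derivative past an ambient tensor via $\overline{\nabla} = \nabla + A*1$ spawns an extra $A$-factor. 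Finally, the $\lambda$-piece: pairing $\lambda\nabla^{m+2}A$ with $\eta^s\nabla^m A$ and applying Young's inequality to absorb the derivative-$(m{+}2)$ contribution into $\frac14 \int \eta^s |\nabla^{m+2}A|^2 d\mu$ yields exactly the $\lambda^2\int \eta^s |\nabla^m A|^2 d\mu$ term, while the remaining $\lambda * \nabla^m A * \sum \nabla^k A * \nabla^\ell \operatorname{Rc}^{(\nu)}$ source is carried through verbatim.

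The main technical obstacle is the bookkeeping: unlike the Euclidean setting of \cite{kuwert2002gradient}, every commutator between $\nabla$, $\Delta$, and the ambient tensors produces an $\operatorname{Rc}^{(\nu)}$-type residue that must be propagated through the induction without generating further $\nabla^{m+2}A$ terms outside the principal bilinear form. The hypothesis $s \geq 2m+4$ is precisely sharp to guarantee that every integration by parts leaves a nonnegative power of $\eta$ in each residual integral, so that the resulting estimate stays localized to $\mathrm{supp}(\eta)$; this is exactly what the subsequent iteration of the estimate in $m$ and interpolation will require in order to produce the uniform higher-derivative bounds of Jachan's long-time existence argument.
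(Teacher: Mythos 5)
Your proposal is correct and follows essentially the same route the paper indicates: adapt Lemma 3.2/Proposition 3.5 of \cite{kuwert2002gradient} by inserting the higher-order evolution equation for $\nabla^m A$ (obtained inductively from the displayed equation for $(\partial_t+\Delta^2)A$ via Lemma \ref{der swap}), integrate by parts on the principal term, control the cutoff contributions with $|\nabla\eta|\leq\Lambda_1$, $|\nabla^2\eta|\leq\Lambda_2+\Lambda_1|A|$ and Young's inequality, and carry the ambient-curvature and $\lambda$-sources through verbatim, absorbing $\lambda\,\eta^s\nabla^mA*\nabla^{m+2}A$ into the $\tfrac34$-coefficient good term. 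One small slip: the quintic piece of the evolution of $\nabla^m A$ should be $P^m_5(A)$ (five factors, $m$ total derivatives), not $P^0_{2m+5}(A)$; your own $P^{2m,m}_6(A)$ term on the right-hand side confirms this is what you meant.
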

This should be compared to Proposition 3.5 in \cite{kuwert2002gradient}. The next step is then to rewrite the terms $\nabla^k \operatorname{Rc}^{(\nu)}$ in terms of ambient derivatives of $\operatorname{Rc}$ and derivatives of the second fundamental form. The resulting terms can then be estimated and sometimes absorbed using Young's inequality. Moreover, lower order terms are estimated by their $L^\infty$-norm.  
 This process is quite lengthy but rather straight-forward and results in the following evolution inequality.
\begin{lem}
	Let $m\geq 1$ and $s\geq 2m+4$. Then there exists a constant $c$ depending only on $s,m$ as well as another constant $C$ which also depends on  $\Lambda_i$, the estimates on the ambient curvature and $|\nabla ^k A|_{L^\infty(\Sigma_t\cap\{\eta>0\})}$ for $k\leq m-3$ such that
	\begin{align*}
	&\partial_s\int_{\Sigma_t} \eta^s |\nabla^m A|^2 d\mu +\frac12 \int_{\Sigma_t}\eta^s |\nabla^{m+2}A|^2 d\mu 
	\\\leq& c(\lambda^2+|A|_{L^\infty(\Sigma_t\cap\{\eta>0\})}^4)	\int_{\Sigma_t}\eta^s |\nabla^m A|^2 d\mu+ C|\eta>0|+C(1+|A|_{L^\infty(\Sigma_t\cap\{\eta>0\})}^4)|A|_{L^2(\Sigma_t\cap\{\eta>0\})}^2.
	\end{align*}
\end{lem}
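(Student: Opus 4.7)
The plan is to begin with the crude evolution inequality from the previous lemma and systematically refine every term on its right-hand side. Three tools are combined: (a) the expansion of $\nabla^{k}\operatorname{Rc}^{(\nu)}$ into a sum of products of ambient derivatives $\overline\nabla^{i}\operatorname{Rc}$ (bounded by assumption) and intrinsic derivatives of $A$, obtained by iterating the Weingarten relation expressing $\nabla\nu$ through $A$ together with the commutator identity $\overline\nabla\operatorname{Rc}=\nabla\operatorname{Rc}+A\ast\operatorname{Rc}$; (b) multiplicative Sobolev and interpolation inequalities of Michael--Simon--Sobolev/Gagliardo--Nirenberg type, in the spirit of Lemmas~2.5 and 2.8 of \cite{kuwert2001willmore} and available here thanks to Lemma~\ref{michael simon sobolev}; and (c) Young's inequality, used to absorb the highest-order contribution into the $\tfrac34\int_{\Sigma_t}\eta^{s}|\nabla^{m+2}A|^{2}\,d\mu$ already sitting on the left. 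The bookkeeping rule is that any derivative $\nabla^{k}A$ with $k\leq m-3$ is bounded in $L^{\infty}$ on $\{\eta>0\}$ and swallowed into the large constant $C$, whereas orders $m-2\leq k\leq m+2$ are kept in $L^{2}$ and interpolated.

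\textbf{Main steps.} The linear-in-$\operatorname{Rc}$ terms $\sum_{k+l=m+2}\nabla^{k}A\ast\nabla^{l}\operatorname{Rc}^{(\nu)}$ and $1\ast\nabla^{m+3}\operatorname{Rc}^{(\nu)}$ are expanded using (a), yielding, up to ambient-bounded factors, expressions of the form $P^{k}_{j}(A)$ with total order up to $m+2$. Paired with $\nabla^{m}A$ under the integral, the worst surviving contribution is of type $\nabla^{m}A\ast\nabla^{m+2}A$, whose $L^{1}$-pairing is handled by Young's inequality and then interpolated against $|\nabla^{m+2}A|_{L^{2}(\Sigma_t)}$ and $|A|_{L^{2}(\Sigma_t\cap\{\eta>0\})}$ with the $L^{\infty}$-norm of $A$ as weight. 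For the polynomial terms $P^{2m,m}_{6}(A)$ and $\nabla^{m}A\ast P^{m+2}_{3}(A)$, four of the low-order factors are placed in $L^{\infty}$ (this is precisely where the $|A|^{4}_{L^{\infty}}$-coefficient in the final inequality comes from), while the remaining derivative-carrying factors are placed in $L^{2}$ and interpolated by (b). The $\lambda$-cross-terms are handled identically, the extra $\lambda$ being isolated via $|\lambda\cdot u\cdot v|\leq\tfrac12\lambda^{2}u^{2}+\tfrac12 v^{2}$ to produce the $c\lambda^{2}\int\eta^{s}|\nabla^{m}A|^{2}$ part of the right-hand side.

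\textbf{Main obstacle.} The principal difficulty is keeping the cutoff exponent admissible throughout every iterated application of Young's inequality and every integration by parts: each such step may transfer a derivative onto $\eta$ and drop the exponent by two, so one has to check that the hypothesis $s\geq 2m+4$ is strong enough to maintain analogous bounds at every sub-order. The factors $\Lambda_{1}^{2}$ and $\Lambda_{1}^{4}+\Lambda_{2}^{2}$ visible in the crude inequality encode exactly this loss and are merged into $C$. A secondary subtlety is that the interpolation inequalities require uniform geometric control along the flow, which is furnished by Simon's diameter estimate together with area-preservation and Lemma~\ref{michael simon sobolev}. Once every right-hand-side contribution from the previous lemma has been brought into one of the four shapes
\begin{equation*}
\tfrac{1}{4}\int\eta^{s}|\nabla^{m+2}A|^{2}\,d\mu,\qquad c(\lambda^{2}+|A|_{L^{\infty}}^{4})\int\eta^{s}|\nabla^{m}A|^{2}\,d\mu,\qquad C|\{\eta>0\}|,\qquad C(1+|A|_{L^{\infty}}^{4})|A|_{L^{2}(\Sigma_t\cap\{\eta>0\})}^{2},
\end{equation*}
absorbing the first of these into the $\tfrac{3}{4}$ on the left-hand side and choosing $\kappa$ small in the Young's inequality steps yields the claimed inequality.
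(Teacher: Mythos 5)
Your proposal follows essentially the same route as the paper, which itself only sketches this step: expand $\nabla^{k}\operatorname{Rc}^{(\nu)}$ into ambient curvature derivatives and derivatives of $A$, absorb the top-order contributions into the good $\int\eta^{s}|\nabla^{m+2}A|^{2}$ term via Young's inequality, put derivatives of order at most $m-3$ into $L^{\infty}$ and the constant $C$, and interpolate the remaining middle orders à la Proposition 4.5 of \cite{kuwert2002gradient}. The strategy and bookkeeping are correct and consistent with the paper's (deliberately brief) argument.
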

This should be compared to Proposition 4.5 in \cite{kuwert2002gradient}. A similar evolution inequality can also be derived for $m=0$, however in this case, a delicacy appears: The ultimate goal is to interpolate between the highest order term and $|A|^2_{L^2}$. Hence, we cannot make use of $L^\infty$-estimates in the evolution equation for $m=0$. To circumvent this problem, one uses the Michael-Simon-Sobolev inequality and absorbs the higher order terms created by this procedure. This is very similar to the proof of Lemma 4.3 in \cite{kuwert2002gradient}. It is only possible to absorb the higher order term if one assumes a small energy condition
\begin{align} |A|^2_{L^2(\Sigma_t\cap\{\eta>0\})}<\epsilon_0 \label{appendix small curvature}
\end{align} for some constant $\epsilon_0>0$. In fact, one obtains the following lemma.
\begin{lem}
	Suppose (\ref{appendix small curvature}) holds and that $s\geq 4$. Then there exists a constant $c$ depending only on $s$ and a constant $C$ which also depends on the ambient curvature bounds and and $\Lambda_i$ such that
	\begin{align*}
	&\partial_t \int_{\Sigma_t} \eta^s |A|^2 d\mu+\frac12 \int_{\Sigma_t} \eta^s(|\nabla ^2A|^2+|\nabla ^2 A|^2|A|^2+|A|^6) d\mu \\\leq& c|\lambda|^2\int_\Sigma \eta^s |A|^2d\mu+c\int_\Sigma \eta^s|\overline{\nabla} \operatorname{Rc}|^2 d\mu +C|A|^2_{L^2(\Sigma_t\cap\{\eta>0\})}.
	\end{align*}
	\label{appendix int m=0}
\end{lem}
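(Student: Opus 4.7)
The plan is to start from a specialization to $m=0$ of the crude evolution inequality from the lemma immediately preceding this one, namely an estimate of the form
\begin{align*}
\partial_t \int_{\Sigma_t} \eta^s |A|^2 d\mu +\tfrac{3}{4}\int_{\Sigma_t} \eta^s |\nabla^{2}A|^2d\mu \leq & \, c\Lambda_1^2\!\int_{\Sigma_t} \eta^{s-2} |\nabla A|^2 d\mu + C\!\int_{\Sigma_t} \eta^{s-4}|A|^2 d\mu\\
& + \lambda ^2\!\int_{\Sigma_t} \eta^s|A|^2 d\mu + (\text{tensor RHS}),
\end{align*}
where the tensor RHS contains integrands of the form $P^0_6(A)$, $A \ast P^2_3(A)$, $A\ast\nabla^3 \operatorname{Rc}^{(\nu)}$, and the lower-order cross terms $\nabla^k A \ast \nabla^l\operatorname{Rc}^{(\nu)}$ with $k+l\leq 2$. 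The target inequality differs from what is known at the $m\ge 1$ level only in the crucial fact that we may not multiply the $|A|^6$ and $|A|^4|\nabla^2 A|$ terms by an $L^\infty$-bound on $A$, since all we have is a small energy hypothesis.

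First I would unpack the terms $\nabla^k \operatorname{Rc}^{(\nu)}$ using $\overline{\nabla}_X \nu = -A(X,\cdot)^\sharp$, converting them into sums of ambient derivatives of $\operatorname{Rc}$ (bounded by the standing curvature hypothesis) and products of the type $A \ast \nabla^j A \ast \overline{\operatorname{Rc}}$. This produces error terms of the shape $|A|^4|\nabla^2 A|$, $|A|^3 |\nabla A|^2$, $|A|^6$ as well as genuine ambient contributions with an $\overline{\nabla}\operatorname{Rc}$ factor. Next I would integrate by parts in $\int \eta^s A \ast \nabla^3 \operatorname{Rc}^{(\nu)}$ to shift at most two derivatives onto $A$ and the cutoff, and apply Young's inequality to split the resulting cross term into $\kappa \int \eta^s |\nabla^2 A|^2$ (absorbable) plus $c \int \eta^s |\overline{\nabla}\operatorname{Rc}|^2$, which matches the term appearing in the conclusion.

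The crucial step is controlling the pure curvature integrands without any $L^\infty$ bound on $A$. Here I would invoke the Michael--Simon--Sobolev inequality (Lemma \ref{michael simon sobolev}) applied to test functions such as $\eta^{s/2}|A|^2$ and $\eta^{s/2}|A|\,|\nabla A|$, together with H\"older, to obtain concentration-type bounds of the form
\begin{align*}
\int_{\Sigma_t} \eta^s\left(|A|^6 + |A|^4|\nabla A|^2\right) d\mu \leq\, c\, |A|^2_{L^2(\Sigma_t\cap\{\eta>0\})}\int_{\Sigma_t} \eta^s \bigl(|\nabla^2 A|^2|A|^2 + |A|^6\bigr) d\mu + (\text{cutoff l.o.t.}).
\end{align*}
Under the smallness hypothesis (\ref{appendix small curvature}), the prefactor $|A|^2_{L^2(\Sigma_t\cap\{\eta>0\})} < \epsilon_0$ is small enough that, after one more pass of Young's inequality on the mixed terms (for instance $|A|^4|\nabla^2 A| \leq \kappa |\nabla^2 A|^2|A|^2 + C(\kappa)|A|^6$), all contributions involving $|\nabla^2 A|^2$, $|\nabla^2 A|^2|A|^2$ and $|A|^6$ on the right can be absorbed into a single lower bound $\tfrac12 \int \eta^s(|\nabla^2 A|^2+|\nabla^2 A|^2|A|^2+|A|^6)\,d\mu$ on the left.

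Finally I would dispose of the cutoff terms $\int \eta^{s-2}|\nabla A|^2$ and $\int \eta^{s-4}|A|^2$. The latter is already of the form $C|A|^2_{L^2(\Sigma_t\cap\{\eta>0\})}$. The former is handled by an interpolation: integrating by parts, $\int \eta^{s-2}|\nabla A|^2 \leq \kappa \int \eta^s |\nabla^2 A|^2 + C \int_{\{\eta>0\}} |A|^2$, with the first part absorbed and the second folded into the final constant. The main obstacle is the combinatorial bookkeeping: at $m=0$ the small-energy hypothesis is the \emph{only} tool one has to tame sixth-order curvature, so every absorption step must be performed with care, and in particular the MSS estimate must be applied \emph{before} any Young-type splitting that would trade $|A|^6$ for another copy of $|\nabla^2 A|^2$, or the closure of the absorption fails.
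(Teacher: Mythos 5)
Your proposal is correct and follows essentially the same route as the paper, which itself only sketches this step: specialize the preceding evolution inequality to $m=0$, note that the $L^\infty$-absorption used for $m\geq 1$ is unavailable, and instead control the sixth-order curvature terms via the Michael--Simon--Sobolev inequality (as in Lemma 4.3 of Kuwert--Sch\"atzle's gradient flow paper), absorbing under the small energy hypothesis (\ref{appendix small curvature}). Your additional remarks on the order of operations (applying the multiplicative Sobolev estimate before any Young splitting that would regenerate $|\nabla^2A|^2$) and on folding the cutoff terms into $C|A|^2_{L^2(\Sigma_t\cap\{\eta>0\})}$ are consistent with the intended argument.
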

 As can be seen in Proposition 4.4. in \cite{kuwert2002gradient}, the same phenomenon appears in the Euclidean case and is consequently \textit{not} caused by the effects of the ambient curvature.
\\\textbf{A-priori estimates.}
Under the small curvature assumption, the integral curvature estimates can be turned into $L^\infty$-estimates by Gronwall's inequality and by Sobolev-type interpolation estimates in the spirit of Lemma 2.8 in \cite{kuwert2001willmore}. Arguing essentially as in Proposition 4.6 in \cite{kuwert2002gradient} while keeping track of the additional terms arising from the ambient curvature and the Lagrange parameter $\lambda$ one obtains
\begin{lem}
	If $|A|^2_{L^2(\Sigma_t\cap\{\eta>0\})}<\epsilon_0$ holds, then for any $k\in\mathbb{N}$ there exist constants $C,\tilde C$ depending on $k,T,\Lambda_i,\sup_{t\in(0,T)}|\eta>0|$, the estimates for the ambient curvature, $|\lambda|_{L^2((0,T))}$ and $|\nabla^i A|_{L^2(\Sigma_0\cap\{\eta>0\})}$ for $i\leq k$ and $i \leq k+2$, respectively, such that
	$$
	\sup_{t\in[0,T)} |\nabla ^k A|_{L^2(\Sigma_t\cap\{\eta=1\} )}\leq C, \qquad
	\sup_{t\in[0,T)} |\nabla ^k A|_{L^\infty(\Sigma_t\cap\{\eta=1\} )}\leq \tilde C.
	$$ 
	\label{appendix a priori}
\end{lem}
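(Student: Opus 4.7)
The plan is to follow the bootstrap scheme of Proposition 4.6 in \cite{kuwert2002gradient} with appropriate modifications to account for the ambient curvature terms and the Lagrange parameter $\lambda$. First I would establish the $k=0$ case by integrating the evolution inequality of Lemma \ref{appendix int m=0} applied to a slightly fattened cutoff $\tilde\eta$ satisfying $\tilde\eta\equiv 1$ on $\{\eta>0\}$. Since $|\lambda|_{L^2((0,T))}$ is by assumption finite and $\int |\overline{\nabla}\operatorname{Rc}|^2d\mu \leq C|\eta>0|$ by the ambient curvature bounds, Gronwall's inequality applied on $[0,T)$ produces a uniform bound on $\int_{\Sigma_t}\eta^s|A|^2d\mu$ and a spacetime integral bound on $\int_0^T\int_{\Sigma_t}\eta^s(|\nabla^2 A|^2+|A|^2|\nabla^2A|^2+|A|^6)d\mu\,dt$. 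This already gives the required $L^2$ estimate for $k=0$ on $\{\eta=1\}$.

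For the induction step I would fix a nested sequence $\eta=\eta_k\prec \eta_{k-1}\prec\dots\prec\eta_0=\tilde\eta$ of cutoff functions so that $\eta_{j+1}\equiv 1$ on $\{\eta_j=1\}$ and their gradient/Hessian bounds depend only on $k$ and the fixed $\Lambda_i$. Assuming inductively that the $L^2$ bounds for $|\nabla^j A|$ have been proved with cutoff $\eta_j$ for all $j\leq m-1$, the Sobolev interpolation inequality (Lemma \ref{sobolev}) together with the spacetime $L^2$ bounds for $|\nabla^{j+2}A|$ and $|A|^2|\nabla^j A|^2$-type quantities obtained along the way yields $L^\infty$ control on $|\nabla^j A|$ for $j\leq m-3$ on $\{\eta_{m-1}=1\}$. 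This is precisely what is needed to make the constant $C$ in the evolution inequality of the preceding lemma (second displayed lemma of the appendix) effective at order $m$; integrating it in time using Gronwall and the $L^2$ integrability of $\lambda^2+|A|^4_{L^\infty}$ produces the $L^2$ bound on $|\nabla^m A|$ on $\{\eta_m=1\}$, closing the induction.

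Once the full family of $L^2$ bounds is in hand, the $L^\infty$ estimates follow immediately from Lemma \ref{sobolev} applied with $\psi=\nabla^k A$ on an intermediate cutoff, using the $L^2$ control of $\nabla^k A$ and $\nabla^{k+2}A$ and the already established $L^\infty$ bound on $H$ (which comes from the $k=0,1,2$ cases combined with Sobolev embedding). The main obstacle I anticipate is bookkeeping: the evolution inequality at order $m$ contains contracted products $P^{2m,m}_6(A)$ and mixed terms of the form $\nabla^m A * P^{m+2}_3(A)$ in which individual factors can be of order up to $m+2$, so the induction has to be arranged so that at each step one genuinely has $L^\infty$ control for all factors of order $\leq m-3$ and uses Young's inequality to trade any $\nabla^{m+1}A$ factors against the good term $\tfrac12\int \eta^s|\nabla^{m+2}A|^2d\mu$. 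A secondary subtlety is that the $\lambda^2$ coefficient forces us to use the integrated Gronwall $\exp(\int_0^T\lambda^2\,ds)$, which is where the hypothesis $|\lambda|_{L^2((0,T))}<\infty$ enters; this is exactly why, in the subsequent application, one must first verify this bound via the $W^2$-growth identity \eqref{h2 growth} before invoking the present lemma.
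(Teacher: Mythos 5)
Your proposal is correct and follows essentially the same route as the paper, which handles this lemma by adapting Proposition 4.6 of \cite{kuwert2002gradient}: Gronwall's inequality applied to the localized evolution inequalities of the two preceding appendix lemmas, an induction on the order of differentiation with nested cutoff functions (the $L^\infty$ bounds at order $\leq m-3$, needed to make the constant in the order-$m$ inequality effective, being supplied by the inductive $L^2$ bounds up to order $m-1$ via the interpolation lemma), and a final Sobolev-type interpolation in the spirit of Lemma 2.8 of \cite{kuwert2001willmore} to upgrade to $L^\infty$, which is exactly where the dependence on initial data up to order $k+2$ enters. The only slip is cosmetic: the Gronwall coefficient $\lambda^2+|A|^4_{L^\infty(\Sigma_t\cap\{\eta>0\})}$ needs to be integrable in time, i.e.\ in $L^1((0,T))$ rather than ``$L^2$'', which holds by the hypothesis $\lambda\in L^2((0,T))$ together with the spacetime bound produced by the $m=0$ step.
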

Similarly, interior estimates without the dependence on the initial data can be proven, c.f. Theorem 3.5 in \cite{kuwert2001willmore}.
\begin{lem}
If $|A|^2_{L^2(\Sigma_t\cap\{\eta>0\})}<\epsilon_0$ then for any $k\in\mathbb{N}$ and any $t_0\in(0,T)$ there exists a constant $C$ depending on $k,t_0^{-1},T,\Lambda_i,\sup_{t\in(0,T)}|\eta>0|$, the estimates on the ambient curvature and $|\lambda|_{L^2((0,T))}$ such that
$$
\sup_{t\in[t_0,T)} |\nabla ^k A|_{L^2(\Sigma_t\cap\{\eta=1\} )}\leq C, \qquad
\sup_{t\in[t_0,T)} |\nabla ^k A|_{L^\infty(\Sigma_t\cap\{\eta=1\} )}\leq C.
$$ 
	
\end{lem}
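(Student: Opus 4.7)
The plan is to mimic Theorem 3.5 of \cite{kuwert2001willmore}, using a time-weighted variant of Lemma \ref{appendix a priori} to eliminate the dependence on initial data while keeping track of the ambient curvature terms and the Lagrange parameter $\lambda$. The crucial point is that all constants appearing in the previous lemmas that depend on the initial data come exclusively from boundary terms of the form $\int_{\Sigma_0}\eta^s|\nabla^k A|^2\,d\mu$ produced when integrating the evolution inequalities in time; inserting a time weight that vanishes at $t=t_0/2$ annihilates these contributions.

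First I would treat the base case $k=0$. Here Lemma \ref{appendix int m=0} already applies on the interval $[t_0/2, T)$: integrating in $t$, noting that the small-energy hypothesis bounds $\int_{\Sigma_{t_0/2}}\eta^s|A|^2\,d\mu<\epsilon_0$, and using Gronwall with the $L^2((0,T))$-control on $|\lambda|$ yields a uniform bound for $\int_{\Sigma_t}\eta^s|A|^2\,d\mu$ on $[t_0/2,T)$, depending only on the admissible quantities.

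For the inductive step $k\geq 1$, introduce the time-weighted energy
\begin{equation*}
f_k(t) := (t-t_0/2)^{k}\int_{\Sigma_t}\eta^{s_k}\bigl(|\nabla^k A|^2+1\bigr)\,d\mu,
\end{equation*}
with $s_k\geq 2k+4$ as required by the higher-order version of the integral curvature estimate preceding Lemma \ref{appendix int m=0}. Differentiating and inserting that estimate, the leading $\tfrac34(t-t_0/2)^k\int\eta^{s_k}|\nabla^{k+2}A|^2\,d\mu$ on the right absorbs all top-order error terms, while the derivative of the weight produces $k(t-t_0/2)^{k-1}\int\eta^{s_k}|\nabla^k A|^2\,d\mu$. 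This residual term, together with all right-hand side contributions from lower-order derivatives, is handled by the inductive hypothesis applied on the slightly smaller interval $[t_0/4,T)$ with cut-off $\tilde\eta$ satisfying $\tilde\eta\equiv 1$ on $\{\eta>0\}$, combined with Young's inequality and the interpolation inequality from Lemma \ref{sobolev}. Applying Gronwall to the resulting differential inequality, using the integrability of $|\lambda|^2$ in time, gives $\sup_{t\in[t_0,T)} f_k(t)\leq C$, whence $\sup_{t\in[t_0,T)}|\nabla^k A|^2_{L^2(\Sigma_t\cap\{\eta=1\})}\leq C\,t_0^{-k}$, with $C$ depending only on the claimed quantities.

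Finally, for the $L^\infty$-estimate I would feed the $L^2$-bounds for $\nabla^k A$ and $\nabla^{k+2}A$ (shifting the cut-off again) into Lemma \ref{sobolev} applied to $\psi=\nabla^k A$, estimating the $H^4|\nabla^k A|^2$ term by first obtaining the pointwise bound on $|A|$ from the $k=0$ and $k=2$ cases of the $L^2$-estimate via the same Sobolev inequality. The main obstacle I expect is organizing the induction so that the lower-order terms produced by the time derivative of the weight and by the numerous $\operatorname{Rc}^{(\nu)}$-contributions in the evolution equation can always be controlled by the inductive hypothesis on a strictly larger spacetime region; this requires choosing a nested family of cut-offs $\eta_j$ in $M$ and exponents $s_k$ compatibly with the interpolations, so that the argument closes without ever reintroducing dependence on $|\nabla^i A|_{L^2(\Sigma_0)}$.
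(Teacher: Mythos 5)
Your proposal is correct and follows essentially the same route as the paper, which simply adapts the local smoothing estimates of Theorem 3.5 in \cite{kuwert2001willmore} via time-weighted energies, induction on $k$ with nested cut-offs, and Gronwall using $\lambda\in L^2((0,T))$. The only slight imprecision is that the residual term $k(t-t_0/2)^{k-1}\int\eta^{s_k}|\nabla^kA|^2d\mu$ is handled by the standard localized $L^2$-interpolation between $\nabla^{k+1}A$ and $\nabla^{k-1}A$ together with the space-time bound on $\nabla^{k+1}A$ from the previous induction step, rather than by Lemma \ref{sobolev}, which is the $L^\infty$-interpolation used only in your final step.
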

\noindent
\textbf{Curvature concentration at a singularity.}
In the next step, it is shown that a singularity can only occur, if the curvature concentrates around one point, a condition that will be specified later. To this end, one notes that a singularity can only occur if $|\lambda|_{L^2((0,T))}$ or $|A|_{C^k(\Sigma_t)}$ blows up for some $k\in\mathbb{N}$, because otherwise the flow converges smoothly as time approaches $T$ and can then be restarted. Assuming for now that $|\lambda|_{L^2((0,T))}$ is bounded, by Lemma \ref{appendix a priori} one obtains uniform curvature estimates if the small curvature condition is satisfied on every ball of a small radius $\rho>0$. For a small $\rho$, this is certainly the case at $t=0$, one can in fact assume that $\int_{\Sigma_t\cap B_\rho}|A|^2d\mu <\frac12 \epsilon_0$ for each ball of radius $\rho$. Integrating the inequality in Lemma \ref{appendix int m=0} one sees that the curvature can only concentrate after an amount of time $\tilde T$ depending on $\rho$, the ambient curvature bounds and $|\lambda|_{L^2((0,T))}$. The last condition seems somewhat restrictive, however the point is to apply the above to a time close to the singular time. Namely, if $|\lambda|_{L^2((0,T))}$ is bounded, then $|\lambda|_{L^2((t,T))}$ approaches zero as $t\to T$.  From this, one can then deduce that a singularity can only occur if the curvature concentrates in a ball of arbitrarily small radius, as otherwise the flow could be continued past the singular time. More precisely, similarly to Theorem 1.2 in \cite{kuwert2002gradient} we have
\begin{lem}
	Let $\lambda\in L^2((0,T))$  and $T<\infty$. Then there exists $\delta>0$, radii $\rho_i\to 0$ and times $t_i\to T$ such that
	$$
	\delta \leq \lim_{i\to\infty} \sup_{x\in M} \int_{\Sigma_{t_i}\cap B_{\rho_i}(x)} |A|^2 d\mu.
	$$
	\label{appendix curvature concentration}
\end{lem}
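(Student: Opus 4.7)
The proof is by contradiction. Assume the conclusion fails. Specializing to $\delta=\epsilon_0/4$, where $\epsilon_0$ is the small-energy threshold of Lemma~\ref{appendix int m=0}, the negation yields a radius $\rho > 0$ and a time $t_1 < T$ such that
$$\sup_{x\in M}\int_{\Sigma_{t_1}\cap B_\rho(x)}|A|^2\,d\mu < \epsilon_0/4.$$
For each $x\in M$, fix a cutoff function $\eta_x$ on $M$ supported in $B_\rho(x)$, equal to $1$ on $B_{\rho/2}(x)$, with $|\overline\nabla\eta_x|$ and $|\overline\nabla^2\eta_x|$ bounded in terms of $\rho$. Since the Willmore energy is non-increasing along the flow and the ambient Ricci is uniformly bounded on the compact region the flow occupies (by Simon's diameter estimate, as used in the setting of the appendix), the integrated Gauss equation provides a uniform $L^2$-bound on $A$ along the entire flow.

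The central step is to propagate the localized small-energy condition all the way up to time $T$. As long as $\int_{\Sigma_t\cap B_\rho(x)}|A|^2\,d\mu < \epsilon_0$, Lemma~\ref{appendix int m=0} gives
$$\frac{d}{ds}\int_{\Sigma_s}\eta_x^s|A|^2\,d\mu \;\leq\; c|\lambda(s)|^2\int_{\Sigma_s}\eta_x^s|A|^2\,d\mu + C,$$
with $C$ depending only on the ambient data and the uniform $L^2$-bound on $|A|$. Gronwall's inequality together with the hypothesis $\lambda\in L^2((0,T))$ yields
$$\int_{\Sigma_t}\eta_x^s|A|^2\,d\mu \;\leq\; \Big(\int_{\Sigma_{t_1}}\eta_x^s|A|^2\,d\mu + C(t-t_1)\Big)\exp\Big(c\int_{t_1}^t|\lambda|^2\,ds\Big).$$
Choosing $t_1$ close enough to $T$ makes both $C(T-t_1)$ and $|\lambda|_{L^2((t_1,T))}^2$ arbitrarily small, so the right-hand side stays below $\epsilon_0/2$ on all of $[t_1,T)$ uniformly in $x$. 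A standard continuation argument then establishes $\int_{\Sigma_t\cap B_{\rho/2}(x)}|A|^2\,d\mu < \epsilon_0/2$ for every $t\in[t_1,T)$ and every $x\in M$.

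With this uniform small-energy condition in hand, Lemma~\ref{appendix a priori} delivers uniform $C^k$-bounds on $A$ for each $k\in\mathbb{N}$ and all $t\in[t_1,T)$. A standard parabolic bootstrap then extracts a smooth limit $\Sigma_T$ as $t\to T$, and the short-time existence theorem applied to $\Sigma_T$ extends the flow past $T$, contradicting the maximality. The main obstacle is the propagation step: the smallness of the localized $L^2$-norm of $A$ must be preserved over the entire interval $[t_1,T)$ without a priori knowledge that this interval is short, and the hypothesis $\lambda\in L^2((0,T))$ is precisely what keeps the Gronwall factor under control and forces it to tend to $1$ as $t_1\to T$. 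Without this integrability, the amplification from the Lagrange term could exhaust the smallness budget before time $T$ is reached.
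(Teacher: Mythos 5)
Your argument is correct and follows essentially the same route as the paper's outline: negate the concentration statement to obtain localized smallness of $\int|A|^2\,d\mu$ at a time near $T$, propagate it forward via Lemma \ref{appendix int m=0}, Gronwall, and the vanishing of the tail $|\lambda|_{L^2((t_1,T))}$, then invoke Lemma \ref{appendix a priori} to get uniform curvature bounds and extend the flow past $T$, contradicting maximality. The one point to tidy is the scale mismatch in your continuation step --- the hypothesis of Lemma \ref{appendix int m=0} is smallness on $B_\rho(x)$ (the support of $\eta_x$) while the Gronwall conclusion only controls the energy on $B_{\rho/2}(x)$ --- which is fixed by the standard covering argument with a correspondingly smaller $\delta$, or avoided entirely by observing that the negation of the lemma already yields smallness at scale $\rho$ for \emph{all} $t\in[t_1,T)$ rather than only at the single time $t_1$.
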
 
The reasoning so far only relied on the fact that one has ambient curvature bounds and that the Michael-Simon-Sobolev inequality is available. However, in the following, the special geometry of an asymptotically Schwarzschild manifold will play a more important part. \\
\textbf{$L^2-$estimates for $\lambda$.} In the more restrictive setting of this paper, Lemma \ref{good estimates} together with Lemma \ref{excess estimate} easily imply a uniform $L^2-$estimate on $\lambda$. However, one can even show an $L^2-$estimate in the more general setting of Theorem \ref{jachan existence}. Let $f:=g(x,\nu)$, where $x$ is the position vector in the chart at infinity. In the Euclidean space, the scaling properties of the area and the scaling invariance of the Willmore energy imply
$$
2|\Sigma|=\int_{\Sigma} fHd\mu, \qquad 0=\int_{\Sigma} fW d\mu.
$$
Now, the point is that these properties still hold for asymptotically Schwarzschild metrics up to a small error term, as can be verified by direct computation. In fact, provided $R_0$ is sufficiently large, one has
$$
|\Sigma| \leq \int_{\Sigma} fHd\mu, \qquad \bigg|\int_{\Sigma} fW d\mu\bigg|\leq cR_0^{-1}\int_\Sigma H^2 d\mu.
$$
Using these estimates, one can then proceed to show the inequality 
$$
\lambda(t)^2 \leq c\mathcal{W}(\Sigma_t) \int_{\Sigma_t} (W+\lambda H)^2d\mu +cR_0^{-1}\mathcal{W}(\Sigma_t)^2|\Sigma|^{-2}=-c\mathcal{W}(\Sigma_t)\partial_t\mathcal{W}+cR_0^{-1}\mathcal{W}(\Sigma_t)^2|\Sigma|^{-2}.
$$
In the first inequality we could have actually replaced $\lambda$ by any real number, the second one follows from (\ref{trick}). As the Willmore energy can be assumed to be uniformly bounded from above and below, this implies that $\lambda$ is in $L^2((0,T))$.
\\
\textbf{Blow-up analysis and long time behavior.} So far, we have shown that if a singularity develops, a curvature concentration occurs, see Lemma \ref{appendix curvature concentration}. The idea is now to blow up at the point where the curvature concentrates and to derive a contradiction under suitable initial conditions. Thanks to the chart at infinity, the blow up can be performed in a convenient way. To this end, let $\rho_i, t_i$ be as in Lemma \ref{appendix curvature concentration} and $F:\Sigma\times [0,T)$ the area preserving Willmore flow. Following the approach in section 4 of \cite{kuwert2001willmore}, one then defines the parabolic rescaling $F_i:=\rho^{-1}F(x,\rho_i^{4}t+t_i)$ defined on $\Sigma\times[-\rho_i^{-4}t_i,\rho_i^{-4}(T-t_i)]$. Additionally, one rescales the metric in the asymptotic chart via $g_i(p)=g(\rho_i p)$.  It then follows that $F_i$ is an area preserving Willmore flow with respect to the metric $g_i$. Moreover, there holds $\mathcal{W}(F_i(\Sigma,0))=\mathcal{W}(F(\Sigma,t_i))$ where the Willmore energy is understood with respect to the different background metrics. The quantity $\rho_i^{-4}(T-t_i)$ can be uniformly estimated from below by the reasoning which was used to prove Lemma \ref{appendix curvature concentration}. From this it follows that the existence time of the rescaled flows is uniformly bounded from below, in fact all flows exist on an interval $[-1,\xi]$ for some $\xi>0$. It is then easy to see that close to the rescaled flow $F_i$, the ambient curvature terms of the rescaled metric $g_i$ converge to $0$ (here one essentially uses again that the original flow stays in a compact region). In the same way, the $L^2-$estimate for $\lambda$ from the previous subsection implies that the rescaled Lagrange parameter satisfies $|\lambda_i|_{L^2([-1,\xi])}\to 0$. The rest of the argument is then essentially the same as in \cite{kuwert2001willmore}: one obtains uniform gradient estimates and shows that the rescaled flow converges to a stationary Willmore immersion in $\mathbb{R}^3$. While the $L^2-$estimates for $\lambda$ required some additional work, a lower bound on the area of the flow is automatic (contrary to the normal Willmore flow, see Theorem 5.2 in \cite{kuwert2001willmore}) which implies that the limiting surface is non-compact. Moreover, the curvature around the point of concentration is not lost in the limit which implies that the limiting surface is not a plane. We therefore obtain
\begin{lem} If a curvature concentration occurs, the surface $\rho_i^{-1}F(\Sigma,t_j)$ converges locally uniformly to a non-compact Willmore immersion $\hat \Sigma$ in $\mathbb{R}^3$ which is not a plane.
\end{lem}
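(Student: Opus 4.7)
The plan is to carry out a parabolic blow-up at the singular time, following the scheme of Section 4 in \cite{kuwert2001willmore}. Let $\rho_i\to 0$, $t_i\to T$ and centres $x_i\in M$ realise the concentration of Lemma \ref{appendix curvature concentration}; after translating in the chart at infinity I may assume $x_i=0$. Introduce the rescaled flows $F_i(\cdot,s):=\rho_i^{-1}F(\cdot,\rho_i^4 s+t_i)$ and metrics $g_i(p):=g(\rho_i p)$. A direct scaling check ($\tilde H=\rho H$, $\tilde\Delta=\rho^2\Delta$) shows that $F_i$ is an area-preserving Willmore flow with respect to $g_i$ on an interval $[-1,\xi]$ (for some uniform $\xi>0$ coming from the lower bound on $\rho_i^{-4}(T-t_i)$), with Lagrange multiplier $\tilde\lambda_i(s)=\rho_i^2\lambda(\rho_i^4 s+t_i)$.

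The first step would be to record three degeneracies of the rescaled data: (i) since the original flow is trapped in a compact subset of $M$ while $\rho_i\to 0$, the decay (\ref{asymptotic behaviour}) forces $g_i\to g_e$ and $\overline\nabla^k\!\operatorname{Rm}_{g_i}\to 0$ in $C^\ell_{loc}(\mathbb{R}^3)$ for every $k,\ell$; (ii) a change of variables and the $L^2((0,T))$-bound on $\lambda$ yield
\begin{equation*}
\|\tilde\lambda_i\|_{L^2([-1,\xi])}^2=\int_{t_i-\rho_i^4}^{t_i+\xi\rho_i^4}\lambda^2\,dt\longrightarrow 0;
\end{equation*}
(iii) tuning $\rho_i$ just below the concentration scale, the smallness hypothesis $\int_{F_i(\cdot,s)\cap B_1(y)}|\tilde A|^2\,d\mu<\epsilon_0$ holds uniformly for all $y$ outside a neighbourhood of the origin and all $s\in[-1,\xi]$, so the interior version of the $L^\infty$ curvature estimate (the appendix lemma preceding Lemma \ref{appendix curvature concentration}) applies and produces uniform $C^k_{loc}$-bounds on $F_i(\cdot,s)$ away from the origin. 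The contributions of $\tilde\lambda_i$ and of the ambient curvature to these estimates are absorbed using (i) and (ii).

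The second step is to extract, via Arzelà–Ascoli and a graph-representation/diagonal argument in local charts over small discs, a smooth limit immersion $\hat F:\hat\Sigma\to\mathbb{R}^3$ of the time-$0$ rescaled surfaces $\rho_i^{-1}F(\Sigma,t_i)$. Passing to the limit in the stationarity equation and using (i), (ii) shows that $\hat F$ satisfies $\Delta \hat H+\hat H|\hat\Acirc|^2=0$, i.e.\ it is a Willmore immersion in Euclidean $\mathbb{R}^3$. Non-compactness of $\hat\Sigma$ follows from area conservation: $|F_i(\Sigma,0)|_{g_i}=\rho_i^{-2}|\Sigma|_g\to\infty$, so no compact limit can absorb this area within any bounded region. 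Finally, by the very choice of the concentration sequence there holds $\int_{F_i(\Sigma,0)\cap B_1(0)}|\tilde A|^2\,d\mu\geq\delta/2>0$ for all large $i$, and smooth local convergence transfers this strict lower bound to $\hat\Sigma$, which therefore cannot be a plane.

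The main obstacle is step (iii): propagating the concentration estimate into the small-energy hypothesis needed to trigger the interior bounds, and ensuring that the resulting $C^k_{loc}$-estimates do not deteriorate under the extra terms introduced by the ambient curvature of $g_i$ and by $\tilde\lambda_i H$. Both difficulties are resolved by combining the $L^2$-decay of $\tilde\lambda_i$ established above with the fact that the ambient curvature of $g_i$ becomes arbitrarily small in $C^\ell$ on any prescribed compact region of the blow-up chart.
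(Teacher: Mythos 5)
Your overall scheme coincides with the one the paper (and Jachan, following Kuwert--Sch\"atzle) uses: the parabolic rescaling $F_i=\rho_i^{-1}F(\cdot,\rho_i^4s+t_i)$ with rescaled metric $g_i(p)=g(\rho_i p)$, the observation that $g_i\to g_e$ with all ambient curvature terms tending to zero because the original flow stays in a compact region, the scaling $\tilde\lambda_i=\rho_i^2\lambda(\rho_i^4s+t_i)$ together with the change of variables showing $|\tilde\lambda_i|_{L^2([-1,\xi])}\to 0$, and the remark that non-compactness of the limit is automatic here because the area is preserved, so $|F_i(\Sigma,0)|_{g_i}=\rho_i^{-2}|\Sigma|_g\to\infty$. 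All of this is consistent with the appendix.

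The genuine gap is in your step (iii) and it undercuts precisely the conclusion that $\hat\Sigma$ is not a plane. You impose the small-energy hypothesis $\int_{F_i(\cdot,s)\cap B_1(y)}|\tilde A|^2\,d\mu<\epsilon_0$ only for $y$ \emph{outside} a neighbourhood of the origin and therefore obtain the interior $C^k_{loc}$-estimates only away from the origin; but in the last step you invoke ``smooth local convergence'' to transfer the lower bound $\int_{F_i(\Sigma,0)\cap B_1(0)}|\tilde A|^2\,d\mu\geq\delta/2$ to the limit. Under your own hypotheses there is no smooth convergence near the origin, so this curvature could perfectly well be lost there (a bubble at a smaller scale, a neck, or a piece of surface escaping), and the limit could still be a plane. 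The point of choosing $\rho_i$ at the concentration scale --- as in the proof of Lemma \ref{appendix curvature concentration} and in Section 4 of \cite{kuwert2001willmore} --- is exactly that after rescaling the energy in \emph{every} unit ball, including those centred at the origin, lies below the threshold $\epsilon_0$ while remaining $\geq\delta/2$ in the unit ball at the (translated) concentration point. With that, the a-priori estimates of Lemma \ref{appendix a priori} and its interior version apply uniformly on all of $\mathbb{R}^3$, the convergence is smooth also at the origin, no curvature is lost, and the lower bound passes to $\hat\Sigma$. You should therefore replace ``for all $y$ outside a neighbourhood of the origin'' by ``for all $y\in\mathbb{R}^3$'' and justify it by the choice of $\rho_i$ (and by the fact, used already in the proof of the concentration lemma, that the energy cannot concentrate instantaneously, so the smallness persists on the whole time interval $[-1,\xi]$); as written, the argument for non-planarity does not close.
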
  
The same reasoning can also be applied if the curvature concentration occurs at $T=\infty$. Now, if one additionally requires that $\mathcal{W}(\Sigma)\leq 8\pi -\rho$ holds for some $\rho>0$, then for $R_0$ sufficiently large, this inequality also holds for the Euclidean Willmore energy. Moreover, the inequality is also true for the Euclidean Willmore energy of $\hat\Sigma$. Then, by a spherical inversion, one obtains an embedded compact Euclidean Willmore surface with a point singularity and Euclidean Willmore energy less than $8\pi$. According to Lemma 5.1 in \cite{kuwert2004removability}, the point singularity can be removed. But then by the classification of genus $0$ Willmore surfaces by Bryant, see \cite{bryant1984duality}, the inversion of $\hat\Sigma$ has to be a round sphere. This, however, implies that $\hat\Sigma$ is a plane, a contradiction. From this we deduce the following result.
\begin{lem}
	Let $\mathcal{W}(\Sigma)<16\pi-\rho$ and $R_0$ be sufficiently large. Then an area preserving Willmore flow enclosing the Ball $B_{R_0}(0)$ for all times cannot develop a singularity at any finite or infinite time.
\end{lem}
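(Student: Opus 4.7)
The plan is to argue by contradiction: suppose the flow develops a singularity at some time $T\in(0,\infty]$. Combined with the $L^2$-estimate on $\lambda$ from the previous subsection, this forces the curvature concentration described in Lemma \ref{appendix curvature concentration}. The preceding blow-up lemma (and the interior a-priori estimates of Lemma \ref{appendix a priori}) then produces rescaled flows $F_i(\cdot,t):=\rho_i^{-1}F(\cdot,\rho_i^4 t+t_i)$ converging locally smoothly to a stationary non-compact Willmore immersion $\hat\Sigma\subset\mathbb{R}^3$ which is not a plane. All that remains is to show this is impossible under the energy hypothesis.

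First I would propagate the Willmore energy bound to $\hat\Sigma$. Because the area preserving Willmore flow is energy-decreasing, $\mathcal{W}_g(F(\cdot,t_i))\leq \mathcal{W}_g(\Sigma)$ for all $i$. Lemma \ref{identities} shows that for surfaces enclosing $B_{R_0}(0)$ the discrepancy $|\mathcal{W}_g-\mathcal{W}_e|$ is controlled by $cR_0^{-1}\mathcal{W}_g$, so for $R_0$ sufficiently large the Euclidean Willmore energy of each $F(\cdot,t_i)$ lies strictly below the threshold imposed by the hypothesis. The Euclidean Willmore energy is invariant under the parabolic scaling, and the rescaled background metric $g_i$ converges in $C^3_{\operatorname{loc}}$ to $g_e$. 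By local smooth convergence of the flows and by lower semicontinuity of $\mathcal{W}_e$, the same energy bound is inherited by $\hat\Sigma$.

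Next I would apply an inversion of $\mathbb{R}^3$ centred at some point $p\in\mathbb{R}^3\setminus\hat\Sigma$, which exists because $\hat\Sigma$ is a genuine surface. The inverted surface $\tilde\Sigma$ is again a Willmore immersion by conformal invariance, has the same Euclidean Willmore energy, and is compact with a single point singularity at the image of infinity (since $\hat\Sigma$ is non-compact and properly immersed, as a local smooth limit of embedded surfaces). Under the energy bound, the removable singularity result \cite[Lemma 5.1]{kuwert2004removability} applies and $\tilde\Sigma$ extends to a smooth closed genus-zero Willmore surface. Bryant's classification \cite{bryant1984duality} then forces $\tilde\Sigma$ to be a round sphere, and hence $\hat\Sigma$ is either a round sphere or a plane; non-compactness excludes the former, leaving a plane, which contradicts the blow-up lemma.

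The main obstacle I expect is the energy bookkeeping: one must verify that the precise Willmore energy bound stated in the hypothesis, after paying the $\mathcal{O}(R_0^{-1})$ cost of switching from $g$ to the Euclidean metric, is still strictly below the critical threshold for both the Kuwert--Schätzle removability theorem and Bryant's classification to apply. A secondary delicacy is ensuring that the convergence to $\hat\Sigma$ is strong enough (smoothly on compact sets) to preserve both the Willmore energy bound and the non-planarity, but both points are already covered by the interior estimates of Lemma \ref{appendix a priori} and by the fact that the concentration of $|A|^2_{L^2}$ on shrinking balls survives the rescaling.
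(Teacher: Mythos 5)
Your argument is correct and follows essentially the same route as the paper's: blow-up at the concentration point, transfer of the energy bound to the Euclidean Willmore energy of the non-compact limit $\hat\Sigma$, spherical inversion, Kuwert--Sch\"atzle removability of the point singularity, and Bryant's classification forcing $\hat\Sigma$ to be a plane, contradicting the blow-up lemma. The only minor imprecision is the claim that the inversion preserves the Willmore energy exactly (for a non-compact surface it shifts by $4\pi$ times the multiplicity of the end), but the energy bookkeeping is treated at the same level of detail in the paper's own outline.
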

Finally, the previous lemma gives uniform curvature estimates for all times and then a standard compactness argument implies the subsequential convergence to a surface of Willmore type.


\begin{thebibliography}{CESY16}
	
	\bibitem[ADM61]{arnowitt1961coordinate}
	Richard Arnowitt, Stanley Deser, and Charles~W Misner.
	\newblock Coordinate invariance and energy expressions in general relativity.
	\newblock {\em Physical Review}, 122(3):997, 1961.
	
	\bibitem[Bar86]{bartnik1986mass}
	Robert Bartnik.
	\newblock The mass of an asymptotically flat manifold.
	\newblock {\em Communications on pure and applied mathematics}, 39(5):661--693,
	1986.
	
	\bibitem[BE14]{brendle2014large}
	Simon Brendle and Michael Eichmair.
	\newblock Large outlying stable constant mean curvature spheres in initial data
	sets.
	\newblock {\em Inventiones mathematicae}, 197(3):663--682, 2014.
	
	\bibitem[Bra01]{bray2001proof}
	Hubert~L Bray.
	\newblock Proof of the riemannian penrose inequality using the positive mass
	theorem.
	\newblock {\em Journal of Differential Geometry}, 59(2):177--267, 2001.
	
	\bibitem[Bra09]{bray2009penrose}
	Hubert~L Bray.
	\newblock The penrose inequality in general relativity and volume comparison
	theorems involving scalar curvature (thesis).
	\newblock {\em arXiv preprint arXiv:0902.3241}, 2009.
	
	\bibitem[Bre13]{brendle2013constant}
	Simon Brendle.
	\newblock Constant mean curvature surfaces in warped product manifolds.
	\newblock {\em Publications math{\'e}matiques de l'IH{\'E}S}, 117(1):247--269,
	2013.
	
	\bibitem[Bry84]{bryant1984duality}
	Robert~L Bryant.
	\newblock A duality theorem for willmore surfaces.
	\newblock {\em Journal of differential geometry}, 20(1):23--53, 1984.
	
	\bibitem[CCE16]{carlotto2016effective}
	Alessandro Carlotto, Otis Chodosh, and Michael Eichmair.
	\newblock Effective versions of the positive mass theorem.
	\newblock {\em Inventiones mathematicae}, 206(3):975--1016, 2016.
	
	\bibitem[CE17a]{chodosh2017global}
	Otis Chodosh and Michael Eichmair.
	\newblock Global uniqueness of large stable cmc surfaces in asymptotically flat
	3-manifolds.
	\newblock {\em arXiv preprint arXiv:1703.02494}, 2017.
	
	\bibitem[CE17b]{chodosh2017far}
	Otis Chodosh and Michael Eichmair.
	\newblock On far-outlying cmc spheres in asymptotically flat riemannian $3
	$-manifolds.
	\newblock {\em arXiv preprint arXiv:1703.09557}, 2017.
	
	\bibitem[CESY16]{chodosh2016isoperimetry}
	Otis Chodosh, Michael Eichmair, Yuguang Shi, and Haobin Yu.
	\newblock Isoperimetry, scalar curvature, and mass in asymptotically flat
	riemannian $3 $-manifolds.
	\newblock {\em arXiv preprint arXiv:1606.04626}, 2016.
	
	\bibitem[CY88]{christodoulou1988some}
	Demetrios Christodoulou and Shing~Tung Yau.
	\newblock Some remarks on the quasi-local mass.
	\newblock {\em Contemporary Mathematics}, 71:9--14, 1988.
	
	\bibitem[DLM05]{de2005optimal}
	Camillo De~Lellis and Stefan M{\"u}ller.
	\newblock Optimal rigidity estimates for nearly umbilical surfaces.
	\newblock {\em Journal of Differential Geometry}, 69(1):075--110, 2005.
	
	\bibitem[DLM06]{de2006c}
	Camillo De~Lellis and Stefan M{\"u}ller.
	\newblock A c 0 estimate for nearly umbilical surfaces.
	\newblock {\em Calculus of Variations and Partial Differential Equations},
	26(3):283--296, 2006.
	
	\bibitem[EM13]{eichmair2013unique}
	Michael Eichmair and Jan Metzger.
	\newblock Unique isoperimetric foliations of asymptotically flat manifolds in
	all dimensions.
	\newblock {\em Inventiones mathematicae}, 194(3):591--630, 2013.
	
	\bibitem[HI01]{huisken2001inverse}
	Gerhard Huisken and Tom Ilmanen.
	\newblock The inverse mean curvature flow and the riemannian penrose
	inequality.
	\newblock {\em Journal of Differential Geometry}, 59(3):353--437, 2001.
	
	\bibitem[Hui06]{huisken2006isoperimetric}
	Gerhard Huisken.
	\newblock An isoperimetric concept for mass and quasilocal mass.
	\newblock {\em Oberwolfach Rep}, 3(1):87--88, 2006.
	
	\bibitem[HY96]{huisken1996definition}
	Gerhard Huisken and Shing-Tung Yau.
	\newblock Definition of center of mass for isolated physical systems and unique
	foliations by stable spheres with constant mean curvature.
	\newblock {\em Inventiones mathematicae}, 124(1-3):281--311, 1996.
	
	\bibitem[Jac14]{jachan2014area}
	Felix Jachan.
	\newblock Flächeninhaltserhaltender willmore-fluss im asymptotisch
	schwarzschildschen.
	\newblock 2014.
	\newblock Available at
	\url{http://www.diss.fu-berlin.de/diss/receive/FUDISS_thesis_000000096594}.
	
	\bibitem[JL16]{jauregui2016lower}
	Jeffrey~L Jauregui and Dan~A Lee.
	\newblock Lower semicontinuity of mass under c0-convergence and huisken’s
	isoperimetric mass.
	\newblock {\em Journal f{\"u}r die reine und angewandte Mathematik (Crelles
		Journal)}, 2016.
	
	\bibitem[KS01]{kuwert2001willmore}
	Ernst Kuwert and Reiner Sch{\"a}tzle.
	\newblock The willmore flow with small initial energy.
	\newblock {\em Journal of Differential Geometry}, 57(3):409--441, 2001.
	
	\bibitem[KS02]{kuwert2002gradient}
	Ernst Kuwert and Reiner Sch{\"a}tzle.
	\newblock Gradient flow for the willmore functional.
	\newblock {\em Communications in Analysis and Geometry}, 10(2):307--339, 2002.
	
	\bibitem[KS04]{kuwert2004removability}
	Ernst Kuwert and Reiner Sch{\"a}tzle.
	\newblock Removability of point singularities of willmore surfaces.
	\newblock {\em Annals of Mathematics}, pages 315--357, 2004.
	
	\bibitem[Lin13]{link2013gradient}
	Florian Link.
	\newblock Gradient flow for the willmore functional in riemannian manifolds of
	bounded geometry.
	\newblock {\em arXiv preprint arXiv:1308.6055}, 2013.
	
	\bibitem[LMS11]{lamm2011foliations}
	Tobias Lamm, Jan Metzger, and Felix Schulze.
	\newblock Foliations of asymptotically flat manifolds by surfaces of willmore
	type.
	\newblock {\em Mathematische Annalen}, 350(1):1--78, 2011.
	
	\bibitem[LY82]{li1982new}
	Peter Li and Shing-Tung Yau.
	\newblock A new conformal invariant and its applications to the willmore
	conjecture and the first eigenvalue of compact surfaces.
	\newblock {\em Inventiones mathematicae}, 69(2):269--291, 1982.
	
	\bibitem[MN12]{maximo2012hawking}
	Davi M{\'a}ximo and Ivaldo Nunes.
	\newblock Hawking mass and local rigidity of minimal two-spheres in
	three-manifolds.
	\newblock {\em arXiv preprint arXiv:1206.5511}, 2012.
	
	\bibitem[Pen82]{penrose1982some}
	Roger Penrose.
	\newblock Some unsolved problems in classical general-relativity.
	\newblock {\em Annals of Mathematics Studies}, pages 631--668, 1982.
	
	\bibitem[QT07]{qing2007uniqueness}
	Jie Qing and Gang Tian.
	\newblock On the uniqueness of the foliation of spheres of constant mean
	curvature in asymptotically flat 3-manifolds.
	\newblock {\em Journal of the American Mathematical Society}, 20(4):1091--1110,
	2007.
	
	\bibitem[Sim93]{simon1993existence}
	Leon Simon.
	\newblock Existence of surfaces minimizing the willmore functional.
	\newblock {\em Communications in Analysis and Geometry}, 1(2):281--326, 1993.
	
	\bibitem[ST02]{shi2002positive}
	Yuguang Shi and Luen-Fai Tam.
	\newblock Positive mass theorem and the boundary behaviors of compact manifolds
	with nonnegative scalar curvature.
	\newblock {\em Journal of Differential Geometry}, 62(1):79--125, 2002.
	
	\bibitem[SY79]{schoen1979proof}
	Richard Schoen and Shing-Tung Yau.
	\newblock On the proof of the positive mass conjecture in general relativity.
	\newblock {\em Communications in Mathematical Physics}, 65(1):45--76, 1979.
	
\end{thebibliography}
\end{document}